\documentclass[a4paper,12pt]{article}
\usepackage{amsmath, amsthm,amssymb, latexsym}
\usepackage{tikz,hyperref,pgf,pgflibraryarrows,enumerate, amscd, amssymb}
\usepackage[all]{xy}
\usepackage[utf8]{inputenc}
\usepackage[T1]{fontenc}
\usepackage{url}
\usepackage{mathbbol}
\usepackage[title]{appendix}

\setlength{\textwidth}{16cm}
\setlength{\evensidemargin}{.5cm}
\setlength{\oddsidemargin}{.5cm}
\setlength{\topmargin}{0cm}
\setlength{\textheight}{650pt}

\newtheorem{thm}{Theorem}[section]
\newtheorem{cor}[thm]{Corollary}
\newtheorem{lem}[thm]{Lemma}
\newtheorem{prop}[thm]{Proposition}

\theoremstyle{definition}

\theoremstyle{remark}
\newtheorem{rmk}[thm]{Remark}

\newtheorem{example}[thm]{Example}

\numberwithin{equation}{section}

\newcommand{\Z}{\mathbb{Z}}

\newcommand{\RR}{\mathbb{R}}
\newcommand{\R}{\mathbb{R}}
\newcommand{\TT}{\mathbb{T}}
\newcommand{\T}{\mathbb{T}}

\newcommand{\DD}{D^{c}_{\mu\nu}}
\newcommand{\Ind}{\operatorname{Ind}}
\newcommand{\Id}{\operatorname{Id}}

\newcommand{\EE}{E^c_{\mu\nu}}
\newcommand{\HH}{\mathbb{H}}

\newcommand{\YM}{\operatorname{YM}}

\newcommand{\id}{\operatorname{id}}

\providecommand{\keywords}[1]{{\textit{Keywords and phrases:}} #1}
\providecommand{\classification}[1]{{\textit{2010 Mathematics Subject Classification:}} #1}

\usepackage[margin=0.9in]{geometry}
\AtEndDocument{\bigskip{\small
  \textsc{Sooran Kang : College of General Education, Chung-Ang University, 84 Heukseok-ro, Dongjak-gu, Seoul, Republic of Korea} \par
  \textit{E-mail address}: \texttt{sooran09@cau.ac.kr} \par
  \addvspace{\medskipamount}
  \textsc{Franz Luef : Department of Mathematics, NTNU Norwegian University of Science and
Technology, Trondheim, Norway} \par
  \textit{E-mail address}: \texttt{franz.luef@ntnu.no} \par
\addvspace{\medskipamount}
  \textsc{Judith A. Packer : Department of Mathematics, University of Colorado at Boulder, Boulder, Colorado, 80209-0395, USA.} \par
  \textit{E-mail address}: \texttt{packer@colorado.edu}
}}

\title{Yang-Mills connections on quantum Heisenberg manifolds}

\author{Sooran Kang, Franz Luef and Judith A. Packer}
\date{\today}

\begin{document}

\maketitle

\begin{abstract}

We investigate critical points and minimizers of the Yang-Mills functional $\YM$ on quantum Heisenberg manifolds $\DD$, where the Yang-Mills functional is defined on the set of all compatible linear connections on finitely generated projective modules over $\DD$. 
 A compatible linear connection which is both a critical point and minimizer of $\YM$ is called a Yang-Mills connection. In this paper, we investigate Yang-Mills connections with constant curvature. We are interested in Yang-Mills connections on  the following classes of modules over $\DD$: (i) Abadie's module $\Xi$ of trace $2\mu$ and its submodules; (ii) modules $\Xi^\prime$ of trace $2\nu$; (iii) tensor product modules of the form $P\EE\otimes \Xi$, where $\EE$ is Morita equivalent to $\DD$ and $P$ is a projection in $\EE$. We present a characterization of critical points and minimizers of $\YM,$ and provide a class of new Yang-Mills connections with constant curvature on $\Xi$ over $\DD$ via concrete examples. In particular, we show that every Yang-Mills connection $\nabla$ on $\Xi$ over $\DD$ with constant curvature should have a certain form of the curvature such as $\Theta_\nabla(X,Y)=\Theta_\nabla(X,Z)=0$ and $\Theta_\nabla(Y,Z)=\frac{\pi i}{\mu}\Id_E$.  Also we show that these Yang-Mills connections with constant curvature do not provide global minima but only local minima. We do this by constructing a set of compatible connections that are not critical points but their values are smaller than those of Yang-Mills connections with constant curvature.
 Our other results include: (i) an example of a compatible linear connection with constant curvature on $\DD$ such that the corresponding connection on an isomorphic projective module does not have constant curvature, and (ii) the construction of a compatible linear connection with constant curvature which neither attains its minimum nor is a critical point of $\YM$ on $\DD$. Consequently the critical points and minimizers of $\YM$ depend crucially on the geometric structure of $\DD$ and of the projective modules over $\DD$. 
Furthermore, we construct the Grassmannian connection on the projective modules $\Xi^\prime$ with trace $2\nu$ over $\DD$  and compute its  corresponding curvature. Finally, we construct tensor product connections on $P\EE\otimes \Xi$ whose coupling constant is $2\nu$ and characterize the critical points of $\YM$ for this projective module.

\end{abstract}

\classification{46L05, 46L87, 58B34}

\keywords{Quantum Heisenberg manifolds, Yang-Mills connections, Morita equivalence, finitely generated projective modules, tensor product connection.}

\tableofcontents

\section{Introduction}

Strict deformation quantization of manifolds is a convenient way to construct noncommutative $C^*$-algebras; noncommutative tori are perhaps the best--known example of these. Here, we focus on quantum Heisenberg manifolds (henceforth abbreviated by QHMs) \cite{Rie3}, which are strict deformation quantizations of Heisenberg manifolds $M_c$ where $c$ is a positive integer. The QHMs are the fibers of  a continuous field of  $C^*$-algebras $\{D^{c,\hslash}_{\mu,\nu}\}$, where $\hslash$ denotes the Planck constant representing the deformation parameter and $\mu,\nu\in \R$ are parameters coming from the Poisson structure on the Heisenberg manifolds $M_c$. 

The QHMs give another important family of noncommutative manifolds along with noncommutative tori, but the QHMs are much less well understood. The QHMs were originally introduced as generalized fixed point algebras of certain crossed product $C^*$-algebras \cite{Rie3}. They also can be viewed as crossed products by Hilbert $C^*$-bimodules (also called generalized crossed products) \cite{AEE}, which themselves are examples of Cuntz-Pimsner algebras \cite{Kat1}. Moreover, the QHMs can be also realized as twisted groupoid $C^*$-algebras \cite{KKP}. In a series of papers  \cite{Ab1, Ab2, Ab3}, Abadie has studied the $K$-theory, range of the trace on the set of projections in $\{D^{c,\hslash}_{\mu,\nu}\}$, and Morita equivalence classes of the QHMs. Finally, Gabriel studied cyclic cohomology and index pairings of the QHMs in \cite{G}. 

Connes and Rieffel introduced Yang-Mills theory on noncommutative manifolds and studied it extensively for noncommutative 2-tori \cite{CR}. The main objective of our investigation is furthering the understanding of Yang-Mills theory on QHMs using the framework of \cite{CR} that was originally initiated in \cite{Kang1} by the first author. Later Lee found a minimizing set of the Yang-Mills functionals in \cite{Lee} using the finitely generated projective module studied by Kang in \cite{Kang1} that was  originally constructed by Abadie in \cite{Ab1}. Chakraborty et al discussed the geometry of the QHMs in \cite{CS, Ch1}, and their contributions  in \cite{Ch2} include that the Yang-Mills functional coming from a spectral triple coincides with the Yang-Mills functional coming from $C^*$-dynamical systems as in \cite{CR} for the QHMs.

It is well-known that vector bundles that are isomorphic to one another can be equipped with distinctly different geometric structures. However, the existing literature has not yet shown this phenomenon clearly for the noncommutative analog of vector bundles over the QHMs, whose geometric structure is very different from that of noncommutative 2-tori. In this paper, we provide two examples of compatible linear connections on the QHMs, the first of which shows that a connection can be isomophic to a compatible linear connection on $\Xi$ with a constant curvature, yet not have constant curvature itself. (See Proposition~\ref{prop:conn_ex1} and Proposition~\ref{prop:conn_ex1-not-cp}). The second example shows that a compatible linear connection with constant curvature on a QHM need not give a critical point of the $\YM$ functional or a minimum value for the $\YM$ functional. (See Proposition~\ref{prop:nabla-1-not-cp}). This provides a stark contrast to noncommutative 2-tori, where the minimum value for the $\YM$ functional occurs on connections with constant curvature. (See \cite{CR} and \cite{Rie5-cp}).

Let us briefly summarize the contents of this paper: In Section 2, the QHMs\footnote{Since $D^{c,\hslash}_{\mu,\nu}$ is isomorphic to $D^{c,1}_{\hslash\mu, \hslash\nu},$ from now on, we drop the Planck constant $\hslash$ from our notation.}
$\DD$ and $E^{c}_{\mu\nu}$ are introduced as generalized fixed point algebras and we define Abadie's equivalence bimodule $\Xi$ between these two QHMs. On the QHM $\DD$, we define an action of the Heisenberg group which induces a natural class of derivations on the smooth subalgebra $(\DD)^\infty$. We continue with a discussion of compatible linear connections on the finitely generated projective module $\Xi$ and define the main player of this investigation, the Yang-Mills functional $\YM$ on the set of all compatible linear connections $CC(\Xi)$.

Section~\ref{sec:mot_ex} provides further observations on $\YM$: In Proposition~\ref{prop:conn_ex1} and Proposition~\ref{prop:conn_ex1-not-cp}, we  discuss an example of a  connection with constant curvature on $\Xi$, such that the corresponding connection on a projective module isomorphic to $\Xi$ ceases to have constant curvature.  A natural question is then whether or not the $\YM$  functionals for the quantum Heisenberg manifolds $\DD$ attain minima or have a critical point at every compatible connection on $\Xi$ with constant curvature. This question is quite subtle and difficult, as the following indicates: we give an example of a compatible connection with constant curvature on $\Xi$ at which $\YM$ neither attains a minimum nor attains a critical point in Proposition~\ref{prop:nabla-1-not-cp}, which shows that the answer to our question is a negative one. 

In Section~\ref{sec:YM-conn}, we consider Yang-Mills connections with constant curvature on $\Xi$ over the QHM $\DD$. Our first result  Proposition~\ref{prop:critical} gives a necessary condition for a connection in $CC(\Xi)$ to be a critical point of $\YM$. Then we give necessary and sufficient condition for a connections $\nabla$ to be a critical point of $\YM$ when $\nabla$ has a constant curvature in Theoren~\ref{thm:critical-ioi}. We also discuss characterizations of Yang-Mills connections  with constant curvature on the projective module $\Xi$ in Theorem~\ref{prop:min-cp} and Corollary~\ref{thm:ym-connection}. In particular, we prove that $\nabla$ is a Yang-Mills connection on $\Xi$ with constant curvature if and only if the curvature $\nabla$ is given by $\Theta_\nabla(X,Y)=\Theta_\nabla(X,Z)=0$ and $\Theta_\nabla(Y,Z)=\frac{\pi i}{\mu}\Id_E$, where $\Id_E$ is the identity of $\EE$.
The section gives a new class of Yang-Mills connections on $\Xi$ (see Theorem~\ref{thm:YM-connection} and Example~\ref{ex:YM-conn}), and proves that in the constant curvature case, compatible connections that are minimizers subject to the constraint of constant curvature must also be critical points for the Yang-Mills functional (see Remark~\ref{rmk:const-conn-not-min}), whereas in the nonconstant curvature situation, this need not be the case. In fact, in Section~\ref{sec:non-const}, we construct a set of compatible connections on $\Xi$ with nonconstant curvature in Theorem~\ref{thm:min-but-nc} that are not critical points of $\YM$ but attain smaller values of $\YM$ than that of Yang-Mills connections on $\Xi$ with constant curvature. This shows that the Yang-Mills connections with constant curvature do not give global minima but are minima only subject to the constraint of constant curvature.

Section~\ref{sec:trace_2nu} looks into projective modules over $\DD$ with trace $2\nu$ instead of $\Xi$ which has trace $2\mu$, in particular a submodule $P\Xi$ of $\Xi$ and a balanced tensor product module $P\EE\otimes_{\EE} \Xi$, where $P$ is a projection in $\EE$ with trace $2\nu$. We first construct the Grassmannian connection on $P\Xi$ and compute the corresponding curvature (Proposition~\ref{prop:Grass_conn} and Proposition~\ref{prop:Grass-curvature}). Then we investigate tensor product compatible connections on $P\EE\otimes_{\EE} \Xi$ and show that they are critical points for the Yang Mills functional if and only if the connections on their respective domains are critical points themselves. (Theorem~\ref{thm:tensor-prod}).

 In the Appendix we derive various crucial calculations that show a modified formula $\nabla^0$ for Lee's connection \cite{Lee} in our setting indeed gives a compatible connection on $\Xi$ with constant curvature $\Theta_{\nabla^0}(X,Y)=\Theta_{\nabla^0}(X,Z)=0$ and $\Theta_{\nabla^0}(Y,Z)=\frac{\pi i}{\mu}$.

\subsection*{Acknowledgments}   
S.K.~was supported by Basic Science Research Program through the National Research Foundation of Korea (NRF) funded by the Ministry of Education (\#NRF-2017R1D1A1B03034697). J.P.~was partially supported by an individual  grant from the Simons Foundation (\#316981).

\section{Preliminaries}\label{sec:prelim}

In this section, we briefly review the finitely generated projective module $\Xi$ over the quantum Heisenberg manifold $\DD$ constructed by Abadie \cite{Ab1} and compatible linear connections on $\Xi$ given in \cite{Kang1, Lee}. Note that throughout the paper, when we say ``projective'',  we mean  ``finitely generated projective''.

Quantum Heisenberg manifolds are constructed as follows: Let $M=\RR\times\TT$ and let $\lambda$ and $\sigma$ be the commuting actions of $\mathbb{Z}$ on $M$ defined by
\begin{equation*}
\lambda_p(x,y)=(x+2 p\mu,y+2 p\nu)\quad\text{and}\quad
\sigma_p(x,y)=(x-p,y),
\end{equation*}
where  $\mu, \nu \in \mathbb{R}$, and $p\in \mathbb{Z}$.

Then form the crossed product $C^{\ast}$-algebras $C_b(M)\rtimes_{\lambda}\mathbb{Z}$ and $C_b(M)\rtimes_{\sigma}\mathbb{Z}$ with the usual star-product and involution. Here $C_b(M)$ is the space of continuous bounded functions on $M$, and  $\rho$ and $\gamma$ denote the actions of $\mathbb{Z}$ on $C_{b}(M)\rtimes_{\lambda}\mathbb{Z}$ and $C_{b}(M)\rtimes_{\sigma}\mathbb{Z}$ given by, for $\Phi,\Psi \in C_c(M\times \mathbb{Z})$,
\begin{equation}\label{eq:fixed-cond}
\begin{split}
(\rho_k\Phi)(x,y,p)&=\overline{e}(ckp(y- p\nu))\Phi(x+k,y,p),\\
(\gamma_k\Psi)(x,y,p)&=e(cpk(y- k\nu))\Psi(x-2 k\mu,y-2 k\nu,p),
\end{split}\end{equation}
 where $k, p \in \mathbb{Z}$, and $e(x)=exp(2\pi ix)$ for any real number $x$.
Then these actions $\rho$, $\gamma$ are proper in the sense of \cite{Rie4-proper}.
 The generalized fixed point algebra of $C_{b}(M)\rtimes_{\lambda}\mathbb{Z}$ by the action $\rho$, denoted by $\DD$, is the closure of $\ast$-subalgebra $D_0$ in the multiplier algebra of $C_{b}(M)\rtimes_{\lambda}\mathbb{Z}$  consisting of functions $\Phi \in C_c(M\times \mathbb{Z})$, which have compact support on $\mathbb{Z}$ and satisfy $\rho_k(\Phi)=\Phi$ for all $k \in \mathbb{Z}$. The $C^*$-algebra $\DD$ is the quantum Heisenberg manifold we are studying in this paper. 

We now introduce another quantum Heisenberg manifold. The generalized fixed point algebra of $C_{b}(\mathbb{R}\times \mathbb{T})\rtimes_{\sigma}\mathbb{Z}$ by the action $\gamma$, denoted by $E^{c}_{\mu\nu}$, is the closure of $\ast$-subalgebra $E_0$ in the multiplier algebra of $C_{b}(\mathbb{R}\times \mathbb{T})\rtimes_{\sigma}\mathbb{Z}$  consisting of functions $\Psi \in C_c(\mathbb{R}\times \mathbb{T}\times \mathbb{Z})$, with compact support on $\mathbb{Z}$ and satisfying $\gamma_k(\Psi)=\Psi$ for all $k \in \mathbb{Z}$. Note that we can consider $\EE$ to be a quantum Heisenberg manifold, since $\EE$ has been shown to be isomorphic to $D^c_{\frac{1}{4\mu}, \frac{\nu}{2\mu}}$ in Proposition~2 of \cite{Kang1}.

According to the main theorem in \cite{Ab1}, these generalized fixed point algebras $D^{c}_{\mu\nu}$ and $E^{c}_{\mu\nu}$ are strongly Morita equivalent and thus there exists an equivalence bimodule $\Xi$ between $D^{c}_{\mu\nu}$ and $E^{c}_{\mu\nu}$. Concretely, $\Xi$ is the completion of $C_c(\mathbb{R}\times \mathbb{T})$ with respect to either one of the norms induced by one of the $D^{c}_{\mu\nu}$ and $E^{c}_{\mu\nu}$-valued inner products, $\langle\cdot,\cdot\rangle_R^D$ and $\langle\cdot,\cdot\rangle_L^E$ respectively, given by
According to the main theorem in \cite{Ab1}, these generalized fixed point algebras $D^{c}_{\mu\nu}$ and $E^{c}_{\mu\nu}$ are strongly Morita equivalent and thus there exists an equivalence bimodule $\Xi$ between $D^{c}_{\mu\nu}$ and $E^{c}_{\mu\nu}$. Concretely, $\Xi$ is the completion of $C_c(\mathbb{R}\times \mathbb{T})$ with respect to either one of the norms induced by one of the $D^{c}_{\mu\nu}$ and $E^{c}_{\mu\nu}$-valued inner products, $\langle\cdot,\cdot\rangle_R^D$ and $\langle\cdot,\cdot\rangle_L^E$ respectively, given by
\begin{equation*}\label{D-value-inner}
\langle f,g \rangle_R^D(x,y,p)=\sum_{k \in \mathbb{Z}}\overline{e}(ckp(y-p\nu))f(x+k,y)\overline{g}(x-2 p\mu+k,y-2 p\nu)\,,
\end{equation*}
\begin{equation*}\label{E-value-inner}
\langle f,g \rangle_L^E(x,y,p)=\sum_{k \in \mathbb{Z}}e(cpk(y- k\nu)\overline{f}(x-2 k\mu,y-2 k\nu)g(x-2 k\mu+p,
y-2 k\nu),
\end{equation*}
 where $f,g \in C_c(\mathbb{R}\times \mathbb{T})$ and $k,\,p \in \mathbb{Z}$.
 Note that the $\DD$-valued inner product $\langle \cdot, \cdot \rangle_R^D$ is conjugate linear in the second variable, i.e. $\langle f, \alpha g\rangle_R^D=\overline{\alpha}\, \langle f, g \rangle_R^D$ for $f,g\in \Xi$, $\alpha\in \mathbb{C}$.
The left and right action of $E^{c}_{\mu\nu}$ and $D^{c}_{\mu\nu}$ on $\Xi$ are given by
\begin{equation*}\label{left-action}
(\Psi\cdot f)(x,y)=\sum_{q \in\mathbb{Z}}\overline{\Psi}(x,y,q)f(x+q,y),
\end{equation*}
\begin{equation*}\label{right-action}
(g\cdot\Phi)(x,y)=\sum_{q\in\mathbb{Z}}g(x+2 q\mu,y+2 q\nu)\overline{\Phi}(x+2 q\mu,y+2 q\nu,q),
\end{equation*}
for $\Psi \in E_0$ , $\Phi \in D_0$ and $f, g \in \Xi$. 

Let $H$ be the reparametrized Heisenberg group given by Rieffel in \cite{Rie3} as follows: for $x,y,z\in \RR$ and a positive integer $c$, let
\[
(x,y,z):=\begin{pmatrix} 1 & y & z/c \\ 0 & 1 & x \\ 0 & 0 & 1 \end{pmatrix}.
\]
Then we can identify $H$ with $\RR^3$ with the product
\[
(x,y,z)(x'y'z')=(x+x', y+y', z+z'+cyx').
\]
The action $L$ of $H$ on the quantum Heisenberg manifold $\DD$ is given by
 \[
(L_{(r,s,t)}\Phi)(x,y,p)=e(p(t+cs(x-r-p\mu)))\Phi(x-r,y-s,p).
 \]
The smooth subalgebra $(\DD)^\infty$ of $\DD$ is given by
 \[
 (\DD)^\infty=\{d\in \DD: h\mapsto L_h(d)\;\;\text{is smooth in norm for $h\in H$}\},
 \]
The infinitesimal form of $L$ gives an action $\delta$ of $\mathfrak{h}$ on $(\DD)^\infty$, where $\mathfrak{h}$ is the corresponding Heisenberg Lie algebra of the reparametrized Heisenberg group $H$. In particular, we let $X, Y, Z$ be the basis of $\mathfrak{h}$ given by
 \begin{equation}\label{eq:basis-XYZ}
 X=(0,1,0)=\begin{pmatrix} 0 & 1& 0 \\ 0 & 0 & 0 \\ 0 & 0 & 0 \end{pmatrix}, \;\; Y=(1,0,0)=\begin{pmatrix} 0 & 0 & 0 \\ 0 & 0 & 1 \\ 0 & 0 & 0 \end{pmatrix}, \;\; Z=(0,0,1)=\begin{pmatrix} 0 & 0 & 1/c \\ 0 & 0 & 0 \\ 0 & 0 & 0 \end{pmatrix}
 \end{equation}
 and then we have $[X,Y]=cZ$. The corresponding derivation $\delta$ on $(\DD)^\infty$ are given by
 \[
 \delta_X(\Phi)(x,y,p)=2\pi icp(x-p\mu)\Phi(x,y,p)-\frac{\partial \Phi}{\partial y}(x,y,p),
 \]
\[
\delta_Y(\Phi)(x,y,p)=-\frac{\partial \Phi}{\partial x}(x,y,p),
\]
and
\[
\delta_Z(\Phi)(x,y,p)=2\pi ip\,\Phi(x,y,p).
\]
According to Lemma~1 of \cite{C1}, there is a dense left $(\EE)^\infty$--right $(\DD)^\infty$ submodule $\Xi^\infty$ of the left $\EE$--right $\DD$ equivalence bimodule $\Xi$, which is a projective and finitely generated left $\EE$--module and a finitely generated, projective right $\DD$-module. For notational simplicity we omit the superscript  ``$\infty$" from smooth spaces of $C^*$-algebras and projective modules over them from now on. 

For $\Xi$ and $\mathfrak{h}$ above, we say that a linear map $\nabla:\Xi \to \Xi \otimes \mathfrak{h}$ is a linear connection if it satisfies
\begin{equation}\label{eq:nabla-der}
\nabla_X(\xi \cdot \Phi)=(\nabla_X(\xi))\cdot \Phi + \xi \cdot (\delta_X(\Phi)),
\end{equation}
for all $X\in \mathfrak{h}$, $\xi\in \Xi$ and $\Phi\in \DD$.
We say that a linear connection is compatible with respect to the inner product (often called the  Hermitian metric) $\langle \cdot, \cdot \rangle_R^D$ if 
\begin{equation}\label{eq:nabla-comp}
\delta_X(\langle \xi,\eta\rangle_R^D)=\langle \nabla_X \xi, \eta\rangle_R^D + \langle \xi, \nabla_X \eta\rangle_R^D.
\end{equation}
The curvature of a compatible connection $\nabla$ is defined to be the alternating bilinear form $\Theta_\nabla$ on $\mathfrak{h}$, given by
\[
\Theta_\nabla(X,Y)=\nabla_X \nabla_Y - \nabla_Y \nabla_X -\nabla_{[X,Y]}
\]
for $X,Y\in \mathfrak{h}$. From now on, we say ``connection'' when we mean ``linear connection''. 
We denote the set of compatible connections on $\Xi$ by $CC(\Xi)$. 

To define the Yang-Mills functional $\YM$ on $CC(\Xi)$, we need to introduce some more structure. Let $\tau$ be a faithful $L$-invariant trace on $\DD$, where $L$ is the action of Heisenberg group on $\DD$. Also we define the trace $\tau_E$ on $\EE$ induced by $\tau$ by 
\[
\tau_E(\langle \xi,\eta\rangle_L^E)=\tau(\langle \eta,\xi\rangle_R^D).
\]
According to \cite{Rie1}, there is a faithful $L$-invariant trace on $\DD$ given by
\[
\tau(\Phi)=\int_{\TT^2} \Phi(x,y,0)\, dx\, dy
\]
for $\Phi\in \DD$, and one can show that
\begin{equation}\label{eq:trace_E}
 \tau_E(\Psi)=\int_0^{2\mu}\int_0^1 \Psi(x,y,0)\,  dy\, dx 
\end{equation}
for $\Psi \in (\EE)_0$ and $\mu>0$. 

The Yang-Mills functional $\YM$ is defined on $CC(\Xi)$ by
\begin{equation}\label{eq:YM-functional}
\YM(\nabla)=-\tau_E(\{\Theta_\nabla,\Theta_\nabla\}_E),
\end{equation}
where $\{\cdot,\cdot\}_E$ is a bilinear form given by
\[
\{\Phi, \Psi\}_E=\sum_{i<j}\Phi(Z_i,Z_j)\Psi(Z_i,Z_j),
\]
for alternating $\EE$-valued 2-forms $\Phi, \Psi$, where $\{Z_1,Z_2,Z_3\}$ is a basis for $\mathfrak{h}$.

We say that a compatible connection $\nabla$ attains a global minimum for $\YM$ if $\YM(\nabla') \ge \YM(\nabla)$ for any other connection $\nabla'\in CC(\Xi)$, and we say that a compatible connection $\nabla$ with constant curvature attains a local minimum for $\YM$ if $\YM(\nabla')\ge \YM(\nabla)$ for any other connection $\nabla'\in CC(\Xi)$ with constant curvature.

Let $\mathcal{U}(\EE)$ be the group of unitary elements of $\EE$, acting on $CC(\Xi)$ by conjugation. i.e. for $u\in \mathcal{U}(\EE)$, $\nabla\in CC(\Xi)$, we define $\mathcal{G}_u(\nabla)$ by
\[
(\mathcal{G}_u(\nabla))_X (\xi)=u\cdot (\nabla_X(u^\ast\cdot \xi))
\]
for $\xi\in \Xi$ and $X\in \mathfrak{h}$. Then it is straightforward to check that $\mathcal{G}_X(\nabla)\in CC(\Xi)$. Also we have
\[
\Theta_{\mathcal{G}_u(\nabla)}(X,Y)=u\,\Theta_\nabla(X,Y) u^* \quad \;\; \text{for $X,Y\in \mathfrak{h}$}
\]
and
\[
\{\Theta_{\mathcal{G}_u(\nabla)}, \Theta_{\mathcal{G}_u(\nabla)}\}=u\, \{\Theta_\nabla, \Theta_\nabla\} u^*.
\]
Thus it follows that 
\[
\YM(\mathcal{G}_u(\nabla))=\YM(\nabla)
\]
for $u\in \mathcal{U}(\EE)$ and $\nabla\in CC(\Xi)$, and hence the Yang-Mills functional $\YM$ is well-defined on the quotient space $CC(\Xi)/\mathcal{U}(\EE)$. One of the main concerns of Yang-Mills theory is to describe the set of minima for $\YM$ on this quotient space, which is called the moduli space for $\Xi$.

Two different compatible connections $\nabla^G$ and $\nabla^0$ for $\delta$ on $\Xi$ with respect to $\langle \cdot, \cdot\rangle_R^D$ have been found in \cite{Kang1} and \cite{Lee}. The former $\nabla^G$ is the Grassmannian connection on $\Xi$ given by, for all $X\in \mathfrak{h}$
\begin{equation}\label{eq:Gr-conn}
\nabla^G_X(\xi)=R\cdot \delta_X(\langle R, \xi\rangle_R^D),
\end{equation}
where $R\in \Xi$,  $\langle R, R\rangle_L^E=\Id_E$, the identity of $\EE$, and $\langle R, R\rangle_R^D$ is a projection in $\DD$.

With the specific function $R$ described in \cite{Kang1}, we have
\[\begin{split}
&\Theta_{\nabla^G}(X,Y)(x,y,p)=f_1(x)\delta_0(p)\\
&\Theta_{\nabla^G}(X,Z)(x,y,p)=0\\
&\Theta_{\nabla^G}(Y,Z)(x,y,p)=f_2(x)\delta_0(p)
\end{split}\]
where $f_1$ and $f_2$ are smooth skew-symmetric periodic functions. (See the details in \cite{Kang1}).

The compatible connection $\nabla^0$ on $\Xi$ is given by
\begin{equation}\label{eq:min_conn}
\begin{split}
&(\nabla^0_X\xi)(x,y)=-\frac{\partial \xi}{\partial y}(x,y)+\frac{\pi ci}{2\mu}x^2f(x,y)\\
&(\nabla^0_Y\xi)(x,y)=-\frac{\partial \xi}{\partial x}(x,y)\\
&(\nabla^0_Z\xi)(x,y)=\frac{\pi i x}{\mu}\xi(x,y).
\end{split}
\end{equation}
Since our setting differs slightly from the setting given in \cite{G} and \cite{Lee}, for readers' convenience we verify that $\nabla^0$ above indeed is a compatible linear connection on $\Xi$ with respect to $\langle \cdot, \cdot\rangle_R^D$ in Proposition~\ref{App:prop:cc} in Appendix~\ref{app:sec:cc}.

Note that $\nabla^0$ is shown to be a minimizer of $\YM$ in \cite{Lee}, but in fact it turns out to be a local minimizer of $\YM$, in the sense that $\YM(\nabla^0) \le \YM(\nabla)$ for any other connection $\nabla$ with constant curvature (see Theorem~\ref{prop:min-cp}), and it is a critical point of $\YM$. When we consider the restricted moduli space of connections with constant curvature, Lee's connection $\nabla^0$ will give a global minimum. However, when we extend the moduli space to consider all compatible connections, $\nabla^0$ gives no longer a global minimum but only a local one.
Also note that the connection $\nabla^0$ is the only minimizing YM connection known up to this point.

The curvature of $\nabla^0$ is given by
\begin{equation}\label{eq:min_curv}
\Theta_{\nabla^0}(X,Y)=0,\;\;\Theta_{\nabla^0}(X,Z)=0,\;\;\Theta_{\nabla^0}(Y,Z)=\frac{\pi i}{\mu}\Id_E,
\end{equation}
where $\Id_E(x,y,p)=\delta_0(p)$. Note that the constant curvature here looks a bit different from the one given in \cite{Lee} since our setting is different. In particular, since the curvature of a compatible connection on $\Xi$ over $\DD$ is skew-symmetric and $\EE$-valued,  we thus obtain $\Theta_{\nabla^0}(Y,Z)=\frac{\pi i}{\mu}\Id_E$; see Proposition~\ref{propA:curv} in Appendix~\ref{app:sec:cc} for details.

According to Theorem~1.1 of \cite{Rie5-cp}  and Section 5 of \cite{Kang1}, a compatible connection $\nabla$ on $\Xi$ with curvature $\Theta_\nabla$ is a critical point of $\YM$ exactly when $\nabla$ satisfies the following equations:
\begin{equation}\label{eq:critical_point}
\begin{split}
&(1)\;[\nabla_Y, \Theta_\nabla(X,Y)]+[\nabla_Z,\Theta_\nabla(X,Z)]=0,\\
&(2)\;[\nabla_X,\Theta_\nabla(Y,X)]+[\nabla_Z,\Theta_\nabla(Y,Z)]=0,\\
&(3)\;[\nabla_X,\Theta_\nabla(Z,X)]+[\nabla_Y, \Theta_\nabla(Z,Y)]-c\Theta_\nabla(X,Y)=0.
\end{split}\end{equation}

\section{Two observations}\label{sec:mot_ex}

\subsection{Isomorphic projective modules with different geometric invariants}
 We now show that the geometry on isomorphic projective modules can be quite different. Namely, we provide an example of a connection with  constant curvature on $\Xi$, such that the corresponding connection on a projective module isomorphic to $\Xi$ ceases to  have constant curvature.

\begin{lem}\label{lem:1}\cite{Kang-thesis}
Let $\Xi$ be the $\EE$-$\DD$ projective bimodule given in Section~\ref{sec:prelim}. Let $R$ be the function that gives the Grassmannian connection in \eqref{eq:Gr-conn} and let $Q=\langle R, R \rangle_R^D$ be the corresponding projection in $\DD$. Then the left $Q\DD Q$--right $\DD$ projective bimodule $Q\DD$ is isomorphic to the left $\EE$--right $\DD$ projective bimodule $\Xi$.
\end{lem}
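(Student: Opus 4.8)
The plan is to realize the isomorphism by explicit, mutually inverse module maps built from the distinguished element $R$, following the general pattern of Rieffel's imprimitivity-bimodule theory; the single special input is the normalization $\langle R,R\rangle_L^E=\Id_E$ together with the fact that $Q=\langle R,R\rangle_R^D$ is a projection.

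First I would assemble the structural identities that the $\EE$-$\DD$ equivalence bimodule $\Xi$ supplies. The crucial one is the associativity (imprimitivity) relation $\langle\xi,\eta\rangle_L^E\cdot\zeta=\xi\cdot\langle\eta,\zeta\rangle_R^D$, together with the fact that, in the sign and conjugation conventions of Section~\ref{sec:prelim}, the right action of $\DD$ factors out of the second slot of $\langle\cdot,\cdot\rangle_R^D$ linearly, i.e. $\langle\eta,\xi\cdot\Phi\rangle_R^D=\langle\eta,\xi\rangle_R^D\cdot\Phi$, and the left action of $\EE$ factors out of the first slot of $\langle\cdot,\cdot\rangle_L^E$, i.e. $\langle a\cdot\xi,\eta\rangle_L^E=a\,\langle\xi,\eta\rangle_L^E$. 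Because both module actions carry a complex conjugation in their definitions (the $\overline{\Phi}$ and $\overline{\Psi}$ in the action formulas), one must verify that these actions come out \emph{unconjugated}; this bookkeeping is where care is needed. Specializing the imprimitivity relation to $\xi=\eta=R$ gives $R\cdot\langle R,\zeta\rangle_R^D=\langle R,R\rangle_L^E\cdot\zeta=\zeta$ for all $\zeta\in\Xi$, and in particular $R\cdot Q=R$.

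Next I would define $\alpha\colon\Xi\to Q\DD$ by $\alpha(\xi)=\langle R,\xi\rangle_R^D$ and $\beta\colon Q\DD\to\Xi$ by $\beta(d)=R\cdot d$. That $\alpha$ lands in $Q\DD$ follows from $Q\cdot\langle R,\xi\rangle_R^D=\langle R,R\cdot\langle R,\xi\rangle_R^D\rangle_R^D=\langle R,\xi\rangle_R^D$, using $R\cdot\langle R,\xi\rangle_R^D=\xi$. Both maps are right $\DD$-linear (for $\alpha$ this is exactly the factoring identity above, for $\beta$ it is associativity of the right action), and they are mutually inverse: $\beta(\alpha(\xi))=R\cdot\langle R,\xi\rangle_R^D=\xi$, while $\alpha(\beta(d))=\langle R,R\cdot d\rangle_R^D=\langle R,R\rangle_R^D\cdot d=Q\,d=d$ for $d\in Q\DD$. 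This already yields the isomorphism of right $\DD$-modules. To upgrade it to a bimodule isomorphism I would transport the left $\EE$-action by setting $\pi(a)=\langle R,a\cdot R\rangle_R^D$: the computation $\alpha(a\cdot\xi)=\langle R,(a\cdot R)\cdot\langle R,\xi\rangle_R^D\rangle_R^D=\langle R,a\cdot R\rangle_R^D\cdot\langle R,\xi\rangle_R^D=\pi(a)\,\alpha(\xi)$ shows $\alpha$ intertwines the left actions. One checks $Q\pi(a)=\pi(a)=\pi(a)Q$, so $\pi(a)\in Q\DD Q$, and that $\pi$ is an algebra homomorphism via $\pi(a)\pi(b)=\langle R,(ab)\cdot R\rangle_R^D=\pi(ab)$ (here the normalization enters through $(a\cdot R)\cdot\langle R,b\cdot R\rangle_R^D=\langle a\cdot R,R\rangle_L^E\cdot(b\cdot R)=(ab)\cdot R$); surjectivity onto $Q\DD Q$ comes from writing $Q\,d\,Q=\pi(\langle R\cdot d,R\rangle_L^E)$, and injectivity is immediate from the module isomorphism. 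Thus $\pi\colon\EE\to Q\DD Q$ is an algebra isomorphism intertwined by $\alpha$, giving the claimed isomorphism of bimodules.

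The main obstacle I anticipate is convention-tracking rather than deep content: keeping the complex conjugations built into the two actions and the two inner products straight, so that the factoring identities come out linear and $\alpha,\beta,\pi$ are genuine module and algebra maps. A secondary point is confirming that all constructions stay within the smooth finitely generated projective modules; but since $R\in\Xi$ and $Q\in\DD$ (in their smooth incarnations), the maps $\alpha$, $\beta$, and $\pi$ manifestly preserve smoothness, so no analytic difficulty arises.
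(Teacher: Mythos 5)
Your proposal is correct and takes essentially the same approach as the paper: the paper's proof uses exactly your maps ($F(\xi)=\langle R,\xi\rangle_R^D$ with inverse $d\mapsto R\cdot d$) and the same transported left action $\phi(a)=\langle R,a\cdot R\rangle_R^D$, verified through the same imprimitivity and factoring identities. The only difference is one of detail: you spell out the homomorphism property and surjectivity of $\phi$ onto $Q\DD Q$ (via $Q\,d\,Q=\phi(\langle R\cdot d,R\rangle_L^E)$), which the paper leaves as ``straightforward.''
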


\begin{proof}
Define a map $F$ on $\Xi$ by $F(\xi)=\langle R, \xi\rangle_R^D$. Then $F(\xi)\in Q\DD$ and the inverse $F^{-1}$ is given by $F^{-1}(d)=R\cdot d$ for $d\in Q\DD$. To see this, we compute
\[\begin{split}
F(\xi) &=\langle R,\xi\rangle_R^D =\langle \Id_E\cdot R,\xi\rangle_R^D=\langle \langle R,R\rangle_L^E\cdot R,\xi\rangle_R^D\\
&=\langle R\cdot\langle R,R\rangle_R^D, \xi\rangle_R^D=\langle R\cdot Q, \xi\rangle_R^D=Q^* (\langle \xi,R\rangle_R^D)^*\\
&=Q \langle R,\xi\rangle_R^D\in Q\DD.
\end{split}\]
Also $(F\circ F^{-1})(d)=F(R\cdot d)=\langle R,R\cdot d\rangle_R^D=\langle R,R\rangle_R^D\cdot d=d$ since $d\in Q\DD$. On the other hand $(F^{-1}\circ F)(\xi)=R\cdot F(\xi)=R\cdot \langle R,\xi\rangle_R^D=\langle R,R\rangle_L^E\cdot \xi =\xi$. Thus $F$ is an isomorphism.

Now to see that $F$ preserves the module structure, we define a map $\phi$ on $E$ by $\phi(a)=\langle R,a\cdot R\rangle_R^D$. Then it is straightforward to show that $\phi$ is an injective $\ast$-homomorphism and $\phi(a)\in Q\DD Q$ for all $a\in \EE$. Also for $\Psi\in \EE$, $\Phi\in \DD$ and $\xi, \eta \in \Xi$, we have
\begin{itemize}
\item[(a)] $F(\Psi\cdot \xi)=\phi(\Psi)\ast F(\xi)$,
\item[(b)] $\phi(\langle \xi,\eta\rangle_L^E)=\langle F(\xi), F(\eta)\rangle_L^{Q\DD}$,
\item[(c)] $\langle \xi,\eta\rangle_R^D=\langle F(\xi),F(\eta)\rangle_R^{Q\DD}$,
\item[(d)] $F(\xi\cdot \Phi)= F(\xi)\ast\Phi$,
\end{itemize}
where $\ast$ is the $C^*$-algebra product of $\DD$, $\langle f,g\rangle_L^{Q\DD}=f\ast g^*$ and $\langle f,g\rangle_R^{Q\DD}=f^*\ast g$ for $f,g\in Q\DD$. Therefore $\Xi$ and $Q\DD$ are isomorphic as projective bimodules.
\end{proof}

Using Lemma~\ref{lem:1}, we find the corresponding compatible linear connection $\nabla'$ of the minimizer $\nabla^0$ given in \eqref{eq:min_conn} as follows.

\begin{prop}\label{prop:conn_ex1}
Let $\mathfrak{h}$ be the Heisenberg Lie algebra with basis $\{X,Y,Z\}$ with $[X,Y]=cZ$ given in \eqref{eq:basis-XYZ}.
Let $\nabla^0$ be the compatible connection on $\Xi$ given in \eqref{eq:min_conn} with the constant curvature $\Theta_{\nabla^0}$ given in \eqref{eq:min_curv}. Let $R$ be the function that gives the Grassmannian connection in \eqref{eq:Gr-conn} and let $Q\DD$ be the projective bimodule that is isomorphic to $\Xi$ given in Lemma~\ref{lem:1}. Then we have the following:
\begin{itemize}
\item[(a)] The corresponding compatible connection $\nabla'$ on $Q\DD$ is given by
\[
\nabla'_X(f)=\langle R,\nabla^0_X(R\cdot f)\rangle_R^D
\]
for $X\in \mathfrak{h}$ and $f\in Q\DD$.
\item[(b)] The values of curvature $\Theta'$ of $\nabla'$ lie in $Q\DD Q$ and they are given by
\[\begin{split}
\Theta'_{\nabla'}(X,Y)=0,\quad \Theta'_{\nabla'}(X,Z)=0,\quad\text{and} \quad \Theta'_{\nabla'}(Y,Z)=-\frac{\pi i}{\mu}Q. \\
\end{split}\]
\end{itemize}
\end{prop}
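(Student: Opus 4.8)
The plan is to recognize $\nabla'$ as the pushforward of $\nabla^0$ along the bimodule isomorphism $F\colon\Xi\to Q\DD$, $F(\xi)=\langle R,\xi\rangle_R^D$, of Lemma~\ref{lem:1}, whose inverse is $F^{-1}(d)=R\cdot d$. Under this identification the formula in (a) is simply the definition of the pushforward: $\nabla'_X(f)=\langle R,\nabla^0_X(R\cdot f)\rangle_R^D=F\bigl(\nabla^0_X(F^{-1}(f))\bigr)$, i.e. $\nabla'_X=F\circ\nabla^0_X\circ F^{-1}$. The genuine content of (a) is that this pushforward again lies in $CC(Q\DD)$, and the content of (b) is to transport the curvature through $F$.

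For part (a) I would first verify the Leibniz rule \eqref{eq:nabla-der} for the right $\DD$-action. Since $F^{-1}$ intertwines the right actions, $F^{-1}(f\ast\Phi)=(R\cdot f)\cdot\Phi=F^{-1}(f)\cdot\Phi$ by associativity of the module action, the connection property of $\nabla^0$ together with property (d) of Lemma~\ref{lem:1} gives $\nabla'_X(f\ast\Phi)=F\bigl(\nabla^0_X(F^{-1}(f))\cdot\Phi\bigr)+F\bigl(F^{-1}(f)\cdot\delta_X(\Phi)\bigr)=\nabla'_X(f)\ast\Phi+f\ast\delta_X(\Phi)$. For compatibility \eqref{eq:nabla-comp} I would use property (c), that $F$ preserves the right $\DD$-valued inner product, to write $\langle f,g\rangle_R^{Q\DD}=\langle F^{-1}(f),F^{-1}(g)\rangle_R^D$; applying $\delta_X$, invoking compatibility of $\nabla^0$, and rewriting $\nabla^0_X F^{-1}=F^{-1}\nabla'_X$ then yields $\delta_X(\langle f,g\rangle_R^{Q\DD})=\langle\nabla'_X f,g\rangle_R^{Q\DD}+\langle f,\nabla'_X g\rangle_R^{Q\DD}$, so $\nabla'\in CC(Q\DD)$.

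For part (b), because $\nabla'_X=F\circ\nabla^0_X\circ F^{-1}$ and the bracket relations on $\mathfrak{h}$ are unchanged, the curvature transforms by conjugation, $\Theta'_{\nabla'}(U,V)=F\circ\Theta_{\nabla^0}(U,V)\circ F^{-1}$. Equivalently, at the level of left-acting endomorphisms, property (a) of Lemma~\ref{lem:1} carries the $\EE$-valued curvature into $Q\DD Q$ via the $\ast$-homomorphism $\phi(a)=\langle R,a\cdot R\rangle_R^D$, which already shows the values of $\Theta'_{\nabla'}$ lie in $Q\DD Q$. Feeding in \eqref{eq:min_curv}, the vanishing components give $\Theta'_{\nabla'}(X,Y)=\phi(0)=0$ and $\Theta'_{\nabla'}(X,Z)=\phi(0)=0$. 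For the last component, $\Theta_{\nabla^0}(Y,Z)=\frac{\pi i}{\mu}\Id_E$ is a scalar multiple of the identity, and $\phi(\Id_E)=\langle R,R\rangle_R^D=Q$ identifies it, up to the scalar, with the projection $Q$; the conjugate-linearity of $F$ (inherited from the conjugate-linearity of $\langle\cdot,\cdot\rangle_R^D$ in its second argument) conjugates the scalar $\frac{\pi i}{\mu}$, producing $\Theta'_{\nabla'}(Y,Z)=-\frac{\pi i}{\mu}Q$.

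The main obstacle is bookkeeping rather than any deep difficulty: one must track the conjugate-linearity of $F$, $F^{-1}$, and the two inner products consistently, since it is precisely this conjugation that flips $\frac{\pi i}{\mu}$ to $-\frac{\pi i}{\mu}$ in the $(Y,Z)$-component. The only other point requiring care is confirming that the curvature values land in $Q\DD Q$ and not merely in $Q\DD$, which follows from $\phi(\EE)\subseteq Q\DD Q$ in Lemma~\ref{lem:1} together with $Q=\phi(\Id_E)\in Q\DD Q$.
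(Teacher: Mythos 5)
Your proposal is correct and follows essentially the same route as the paper: the paper likewise defines $\nabla'_X=F\circ\nabla^0_X\circ F^{-1}$, derives $\Theta'_{\nabla'}(U,V)=\langle R,\Theta_{\nabla^0}(U,V)\cdot R\rangle_R^D$ (which is precisely your $\phi(\Theta_{\nabla^0}(U,V))$), and obtains the sign flip in the $(Y,Z)$-component from the conjugate-linearity of $\langle\cdot,\cdot\rangle_R^D$ in its second argument, exactly as you do via the conjugate-linearity of $F$. Your appeal to properties (a), (c), (d) of Lemma~\ref{lem:1} to check compatibility and to place the curvature values in $Q\DD Q$ is just a slightly more structural packaging of the explicit inner-product computations the paper carries out (the paper's separate calculation showing $\Theta'_{\nabla'}(X,Y)=Q\langle R,\Theta_{\nabla^0}(X,Y)\cdot R\rangle_R^D\,Q$ is exactly your observation that $\phi(\EE)\subseteq Q\DD Q$).
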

\begin{proof}
First note that the isomorphism $F:\Xi \to Q\DD$ is explicitly given by $F(\xi)=\langle R, \xi\rangle_R^D$ for $\xi\in \Xi$ with the inverse $F^{-1}(d)=R\cdot d$ by Lemma~\ref{lem:1}, where $R\in \Xi$, $\langle R,R\rangle_L^E=Id_E$ and $\langle R, R\rangle_R^D=Q$. Then the corresponding compatible connection $\nabla'$ and the curvature $\Theta_{\nabla'}$ are given by 
\[
\nabla'_X=F\circ \nabla^0_X \circ F^{-1},\quad \text{and}\quad \Theta_{\nabla'}(X,Y)=F\circ \Theta_{\nabla^0}(X,Y)\circ F^{-1}.
\]
Then a straightforward computation shows that 
\[
\nabla'_X(f)=\langle R,\nabla^0_X(R\cdot f)\rangle_R^D,
\]
which gives (a). 

For (b), fix $f\in Q\DD$, we have
\[\begin{split}
&\Theta'_{\nabla'}(X,Y)\cdot f =\nabla'_X(\nabla'_Y(f))-\nabla'_Y(\nabla'_X(f))-\nabla'_{[X,Y]}(f)\\
&=\langle R,\nabla^0_X(R\cdot \nabla'_Y(f))\rangle_R^D-\langle R, \nabla^0_Y(R\cdot \nabla'_X(f))\rangle_R^D-\langle R, \nabla^0_{[X,Y]}(R\cdot f)\rangle_R^D\\
&=\langle R, \nabla^0_X(R\cdot \langle R,\nabla^0_Y(R\cdot f)\rangle_R^D-\langle R,\nabla^0_Y(R\cdot \langle R,\nabla^0_X(R\cdot f)\rangle_R^D)\rangle_R^D  -\langle R,\nabla^0_{[X,Y]}(R\cdot f)\rangle_R^D\\
&=\langle R,\nabla^0_X(\nabla^0_Y(R\cdot f))-\nabla^0_Y(\nabla^0_X(R\cdot f))-\nabla^0_{[X,Y]}(R\cdot f)\rangle_R^D\quad\;\; (\text{since}\;\; \langle R, R\rangle_L^E=\id_E)\\
&=\langle R,\Theta_{\nabla^0}(X,Y)(R\cdot f)\rangle_R^D\\
&=\langle R,\Theta_{\nabla^0}(X,Y)\cdot R\rangle_R^D\cdot f.
\end{split}\]
Thus $\Theta'_{\nabla'}(X,Y)=\langle R,\Theta_{\nabla^0}(X,Y)\cdot R\rangle_R^D.$ Since $\Theta_{\nabla^0}(X,Y)=0$ and $\Theta_{\nabla^0}(X,Z)=0$, we get
\[
\Theta'_{\nabla'}(X,Y)=0\quad\text{and} \quad \Theta'_{\nabla'}(X,Z)=0.
\]
Also the fact that $\langle \cdot, \cdot \rangle_R^D$ is conjugate linear in the second variable implies that 
\[\begin{split}
\Theta'_{\nabla'}(Y,Z)&=\langle R, \Theta_{\nabla^0}(Y,Z)\cdot R \rangle_R^D = \langle R, \frac{\pi i}{\mu} \Id_E \cdot R \rangle_R^D \\
&= -\frac{\pi i}{\mu} \langle R, R\rangle_R^D=-\frac{\pi i}{\mu} Q.
\end{split}\]

To see that $\Theta'_{\nabla'}(X,Y)\in Q\DD Q$, we compute
\[\begin{split}
\Theta'_{\nabla'}(X,Y)&=\langle R,\Theta_{\nabla^0}(X,Y)\cdot R\rangle_R^D\\
&=\langle \langle R,R\rangle_L^E\cdot R, \Theta_{\nabla^0}(X,Y)\cdot \langle R, R\rangle_L^E\cdot R\rangle_R^D\\
&=\langle R\cdot \langle R, R,\rangle_R^D,\Theta_{\nabla^0}(X,Y)\cdot R\cdot \langle R, R\rangle_R^D\rangle_R^D\\
&=\langle R, R\rangle_R^D \langle R,\Theta_{\nabla^0}(X,Y)\cdot R\rangle_R^D\langle R, R\rangle_R^D \\
&=Q\langle R,\Theta_{\nabla^0}(X,Y)\cdot R\rangle_R^D Q \in Q\DD Q.
\end{split}\]

\end{proof}

Now we show that $\nabla'$ is not a critical point of $\YM$:

\begin{prop}\label{prop:conn_ex1-not-cp}
The compatible connection $\nabla'$ on $Q\DD$ given in Proposition~\ref{prop:conn_ex1}(a)  is not a critical point of the Yang-Mills functional $\YM$ on $\DD$.

\end{prop}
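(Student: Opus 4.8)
The plan is to test $\nabla'$ directly against the critical point equations \eqref{eq:critical_point}, now read on the module $Q\DD$ using the curvature computed in Proposition~\ref{prop:conn_ex1}(b). Substituting $\Theta_{\nabla'}(X,Y)=\Theta_{\nabla'}(X,Z)=0$ and $\Theta_{\nabla'}(Y,Z)=-\tfrac{\pi i}{\mu}Q$, equation~(1) holds automatically since both of its entries vanish. Using skew-symmetry of the curvature, together with the fact that the term $c\,\Theta_{\nabla'}(X,Y)$ in equation~(3) is zero, equations~(2) and~(3) collapse to the two requirements
\[
[\nabla'_Z, Q]=0 \qquad\text{and}\qquad [\nabla'_Y, Q]=0 .
\]
Thus the whole question reduces to whether the projection $Q=\langle R,R\rangle_R^D$ is covariantly constant for $\nabla'$, and it suffices to show that one of these two commutators is nonzero.

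The decisive step, which I expect to be the main obstacle, is to evaluate $[\nabla'_Y,Q]$ (and, if needed, $[\nabla'_Z,Q]$) explicitly as elements of $Q\DD Q$ and to verify that they do not vanish. For this I would insert the formula $\nabla'_X(f)=\langle R,\nabla^0_X(R\cdot f)\rangle_R^D$ from Proposition~\ref{prop:conn_ex1}(a), together with the explicit operators \eqref{eq:min_conn} for $\nabla^0$ (so that $\nabla^0_Y=-\partial_x$ and $\nabla^0_Z$ is multiplication by $\tfrac{\pi i x}{\mu}$) and the concrete function $R$ of \cite{Kang1}, and then carry out the sums defining $\langle\cdot,\cdot\rangle_R^D$. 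The guiding principle is the contrast with $\Xi$: there $\Theta_{\nabla^0}(Y,Z)=\tfrac{\pi i}{\mu}\Id_E$ is a scalar multiple of the unit $\Id_E$ of the endomorphism algebra and is covariantly constant, which is exactly why $\nabla^0$ is a critical point; after transport through $F$ the unit is replaced by the projection $Q$, which is not a scalar multiple of the identity of $\DD$. The expectation is that the resulting covariant derivative of $Q$ detects the genuine $x$-dependence carried by $R$ and hence fails to vanish, and verifying this nonvanishing by the explicit computation is the crux of the argument.

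Once the nonvanishing of, say, $[\nabla'_Y,Q]$ is in hand, equation~(3) of \eqref{eq:critical_point} fails for $\nabla'$, and hence $\nabla'$ is not a critical point of $\YM$. Combined with Proposition~\ref{prop:conn_ex1}, this yields the intended conclusion: the module isomorphism $F$ sends the constant-curvature critical connection $\nabla^0$ on $\Xi$ to a connection $\nabla'$ on the isomorphic module $Q\DD$ that is neither of constant curvature nor a critical point of the Yang-Mills functional.
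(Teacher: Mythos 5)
Your reduction contains a step that destroys exactly the mechanism the paper's proof relies on, and the computation you defer to the end would in fact come out to zero. When you ``collapse'' equations (2) and (3) of \eqref{eq:critical_point} to $[\nabla'_Z,Q]=0$ and $[\nabla'_Y,Q]=0$, you pull the scalar $-\tfrac{\pi i}{\mu}$ out of the commutators, i.e.\ you treat $\nabla'_Y,\nabla'_Z$ as $\mathbb{C}$-linear operators commuting with constants. But the curvature value $-\tfrac{\pi i}{\mu}Q\in Q\DD Q$ acts on the module $Q\DD$ by left $\ast$-multiplication, and left multiplication by $Q$ is the \emph{identity operator} on $Q\DD$: one has $Q\ast f=f$ for every $f\in Q\DD$, and $\nabla'_Z(f)\in Q\DD$, so $[\nabla'_Z,Q]\cdot f=\nabla'_Z(Q\ast f)-Q\ast\nabla'_Z(f)=0$, and likewise $[\nabla'_Y,Q]=0$. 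Within your framework the commutators vanish identically, so no explicit computation with the concrete function $R$ of \cite{Kang1} can produce the non-vanishing you ``expect''; your criterion, applied honestly, would certify that $\nabla'$ \emph{is} a critical point, the opposite of the claim. Your guiding intuition is also off target: the obstruction has nothing to do with the $x$-dependence of $R$, nor with $Q$ failing to be ``covariantly constant.'' Moreover, even setting this aside, your text only outlines a plan --- the decisive non-vanishing is asserted as an expectation, not proved --- so the core of the proposition is missing.

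The paper's proof succeeds precisely because it never pulls the scalar out of the commutator. It substitutes $\nabla'_Z(f)=\langle R,\nabla^0_Z(R\cdot f)\rangle_R^D$, uses $R\cdot Q=\langle R,R\rangle_L^E\cdot R=R$, and then exploits the fact, stressed in Section~\ref{sec:prelim}, that $\langle\cdot,\cdot\rangle_R^D$ is \emph{conjugate linear in the second variable}: in the term $\nabla'_Z\big(-\tfrac{\pi i}{\mu}Q\cdot f\big)$ the purely imaginary constant $-\tfrac{\pi i}{\mu}$ passes through the inner product and flips sign, while in the term $\tfrac{\pi i}{\mu}\,Q\,\nabla'_Z(f)$ it does not. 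Hence the two terms of $[\nabla'_Z,\Theta'_{\nabla'}(Y,Z)]$ add instead of cancel, giving $\tfrac{2\pi i}{\mu}\langle R,\nabla^0_Z(R\cdot f)\rangle_R^D=\tfrac{2\pi i}{\mu}\nabla'_Z(f)\neq 0$, so that equation (2) of \eqref{eq:critical_point} already fails. In other words, the whole point of this example is that the module isomorphism $F(\xi)=\langle R,\xi\rangle_R^D$ is conjugate linear, so transporting operators through $F$ does not commute with multiplication by $i$; it is this anti-linearity --- not any geometric feature of $R$ --- that makes the transported connection fail the critical point equations. This idea is absent from your proposal, and without it the argument cannot be completed along the route you describe.
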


\begin{proof}
If $\nabla'$ is a critical point of $\YM$, then $\nabla'$ should satisfy equations (1)--(3) in \eqref{eq:critical_point}, in particular (2), which is

\begin{equation}
[\nabla'_X,\Theta'_{\nabla'}(Y,X)] + [\nabla_Z, \Theta'_{\nabla'}(Y,Z)]=0,
\end{equation}

We show that $\nabla'$ does not satisfy the above equation. Since $\Theta'_{\nabla'}(X,Y)=\Theta'_{\nabla'}(Z,X)=0$ and $\Theta'_{\nabla'}(Y,Z)=-\frac{\pi i}{\mu} Q$, we get
\[\begin{split}
&\Big([\nabla'_X,\Theta'_{\nabla'}(Y,X)] + [\nabla'_Z,\Theta'_{\nabla'}(Y,Z)]\Big)\cdot f \\
&=\nabla'_Z(-\frac{\pi i}{\mu} Q\cdot f) + \frac{\pi i}{\mu} Q(\nabla'_Z(f)) \\
&=\langle R, \nabla^0_Z\big(R\cdot \frac{-\pi i}{\mu}Q\cdot f \big)\rangle_R^D +\frac{\pi i}{\mu} Q\langle R, \nabla^0_Z(R\cdot f)\rangle_R^D\\
\end{split}\]
But $Q=\langle R, R\rangle_R^D$, thus we get
\[
\begin{split}
&\Big([\nabla'_X,\Theta'_{\nabla'}(Y,X)] + [\nabla'_Z,\Theta'_{\nabla'}(Y,Z)]\Big)\cdot f \\
&=\langle R,\nabla^0_Z\big(R\cdot (\frac{-\pi i}{\mu} \langle R, R\rangle_R^D\cdot f)\big)\rangle_R^D + \frac{\pi i}{\mu}\langle R, R\rangle_R^D\langle R,\nabla^0_Z(R\cdot f)\rangle_R^D\\
&=\langle R,(\nabla^0_Z\big((\frac{-\pi i}{\mu} \langle R, R\rangle_L^E\cdot R) \cdot f\big)\rangle_R^D + \frac{\pi i}{\mu} \langle R, R\cdot \langle R, \nabla^0_Z(R\cdot f)\rangle_R^D\rangle_R^D\\
&= \frac{\pi i}{\mu} \langle R,\nabla^0_Z(R\cdot f)\rangle_R^D+\frac{\pi i}{\mu}\langle R, \nabla^0_Z(R\cdot f)\rangle_R^D\\
&= \frac{2\pi i}{\mu} \langle R, \nabla^0_Z(R\cdot f)\rangle_R^D\ne 0.
\end{split}\]
Therefore $\nabla'$ is not a critical point of $\YM$.
\end{proof}

Proposition~\ref{prop:conn_ex1} and Proposition~\ref{prop:conn_ex1-not-cp} clearly show that the geometric structure on algebraically and topologically isomorphic projective modules can be quite different. In particular, $\nabla'$ on $Q\DD$ has non-constant curvature and does not give a critical point of $\YM$ even though the corresponding connection $\nabla^0$ on the isomorphic projective module $\Xi$ has constant curvature and does give a critical point of $\YM$.

\subsection{Connections with constant curvature}

To this point, we have seen only one compatible connection on $\Xi$ with constant curvature on which $\YM$ attains  a local minimum and that is a critical point of $\YM$, constructed in  \cite{Lee}. One might ask if $\YM$ attains its minimum and has a critical point at every compatible connection on $\Xi$ with constant curvature of $\YM$ for the quantm Heisenberg manifolds $\DD$. Here we give an example of a compatible connection with constant curvature on $\Xi$ which is neither a minimizer nor a critical point of $\YM$, which shows that our question must be answered in the negative. 

\begin{thm}\label{thm:const_conn2}
Let $\Xi$ be the left $\EE$ and right $\DD$ projective bimodule discussed in the previous sections, and let $\mathfrak{h}$ be the Heisenberg Lie algebra with basis $\{X,Y,Z\}$ with $[X,Y]=cZ$ given in \eqref{eq:basis-XYZ}.
 Suppose that $\mu\ne 0$ and $\nu\ne 0$. Define a linear map $\nabla^1:\Xi\to \Xi\otimes \mathfrak{h}^*$ by
\[\begin{split}
(\nabla^1_Xf)(x,y)&=-\frac{\partial f}{\partial y}(x,y)+(\frac{\pi c i}{2\mu}x^2-\nu i x+\mu i y)f(x,y)\\
(\nabla^1_Yf)(x,y)&=-\frac{\partial f}{\partial x}(x,y)\\
(\nabla^1_Zf)(x,y)&=\frac{\pi i x}{\mu}f(x,y)
\end{split}\]
Then $\nabla^1$ is a compatible linear connection with constant curvature
\[
\Theta_{\nabla^1}(X,Y)=\nu i \Id_E,\quad \Theta_{\nabla^1}(X,Z)=0,\quad \Theta_{\nabla^1}(Y,Z)=\frac{\pi i}{\mu}\Id_E,
\]
where $\Id_E(x,y,p)=\delta_0(p)$.
\end{thm}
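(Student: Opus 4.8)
The plan is to exploit the fact that $\nabla^1$ differs from Lee's connection $\nabla^0$ of \eqref{eq:min_conn} only by a zeroth-order term, and only in the $X$-direction. Writing $M$ for the operator of pointwise multiplication by the function $-\nu i x + \mu i y$, we have $\nabla^1_X = \nabla^0_X + M$, $\nabla^1_Y = \nabla^0_Y$ and $\nabla^1_Z = \nabla^0_Z$. Since Proposition~\ref{App:prop:cc} already records that $\nabla^0$ is a compatible connection with the curvature \eqref{eq:min_curv}, I would reduce all three assertions---being a connection, being compatible, and the curvature formula---to elementary statements about the single perturbation $M$.

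First, to see that $\nabla^1$ is a connection it suffices to check that the difference $A$, with $A_X = M$ and $A_Y = A_Z = 0$, is an $\operatorname{End}_{\DD}(\Xi)$-valued one-form, because the space of connections is affine over such forms. The only thing to verify is that $M$ commutes with the right $\DD$-action, i.e. $M(f\cdot\Phi) = (Mf)\cdot\Phi$. The crucial point is that the coefficient $-\nu x + \mu y$ is invariant under the translations $(x,y)\mapsto(x+2q\mu,\,y+2q\nu)$ defining the right action, since $-\nu(x+2q\mu)+\mu(y+2q\nu) = -\nu x + \mu y$; pulling the scalar $M(x,y)$ out of the defining sum of $f\cdot\Phi$ then gives the claim. (Multiplication by this polynomial preserves the smooth module, exactly as the $x^2$-term of $\nabla^0_X$ does.)

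Second, for compatibility I would use that $\nabla^0$ is already compatible, so it remains only to show that $M$ is skew-adjoint for $\langle \cdot, \cdot\rangle_R^D$, i.e. $\langle Mf,g\rangle_R^D + \langle f,Mg\rangle_R^D = 0$. Evaluating both inner products at $(x,y,p)$ via their defining series, the coefficient in the $k$-th summand is $-\nu i(x+k)+\mu i y$ in the first term and the complex conjugate of $M$ evaluated at the shifted point $(x-2p\mu+k,\,y-2p\nu)$ in the second. The same invariance under $(2p\mu,2p\nu)$ shows that the two arguments give the same (purely imaginary) value $M(x,y)-\nu i k$, so the two coefficients differ exactly by a sign and the series cancel termwise.

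Finally, I would compute the curvature from $\nabla^1 = \nabla^0 + A$. Using $[X,Z]=[Y,Z]=0$ and $[X,Y]=cZ$, the contributions reduce to $\Theta_{\nabla^1}(X,Y) = \Theta_{\nabla^0}(X,Y) + [M,\nabla^0_Y]$, $\Theta_{\nabla^1}(X,Z) = \Theta_{\nabla^0}(X,Z) + [M,\nabla^0_Z]$ and $\Theta_{\nabla^1}(Y,Z) = \Theta_{\nabla^0}(Y,Z)$. Since $M$ and $\nabla^0_Z$ are both multiplication operators, $[M,\nabla^0_Z]=0$, giving $\Theta_{\nabla^1}(X,Z)=0$; and $[M,\nabla^0_Y] = [M,-\partial_x] = \partial_x M = -\nu i$ as a scalar operator. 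The one genuinely delicate point---and the step I expect to be the main obstacle---is translating these scalar multiplication operators into elements of $\EE$: because the left $\EE$-action involves the complex conjugate of its symbol, left multiplication by $\lambda\,\Id_E$ acts as multiplication by $\overline{\lambda}$. Hence the operator $-\nu i$ corresponds to $\nu i\,\Id_E$, and, consistently, the unchanged $(Y,Z)$-entry $-\tfrac{\pi i}{\mu}$ corresponds to $\tfrac{\pi i}{\mu}\Id_E$, yielding exactly the stated curvature. Keeping this conjugation straight is what fixes the signs in the final answer.
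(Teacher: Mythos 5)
Your proposal is correct, and it reaches the stated conclusions by a genuinely different route from the paper's proof. The paper proceeds by brute force: it verifies the Leibniz rule and the compatibility identity for $\nabla^1_X$ by expanding the defining series of the right $\DD$-action and of $\langle\cdot,\cdot\rangle_R^D$ in full, and it computes all three curvature entries by direct differentiation, invoking agreement with $\nabla^0$ only for the $(Y,Z)$-entry. You instead isolate the zeroth-order perturbation $M$ (multiplication by $-\nu i x+\mu i y$) and exploit the affine structure of the set of compatible connections: both identities you need for $M$ --- that it commutes with the right $\DD$-action, and that it is skew-adjoint for $\langle\cdot,\cdot\rangle_R^D$ --- reduce to the invariance $-\nu(x+2q\mu)+\mu(y+2q\nu)=-\nu x+\mu y$ together with the fact that $M$ is purely imaginary valued, and the curvature then falls out of the commutators $[M,\nabla^0_Y]=-\nu i$ and $[M,\nabla^0_Z]=0$ plus the known curvature \eqref{eq:min_curv} of $\nabla^0$ from Proposition~\ref{App:prop:cc}. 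This is shorter, explains structurally where the constant $\nu i$ comes from (the $x$-derivative of the perturbing function), and is in the same spirit as the $\nabla=\nabla^0+\mathbb{H}$ decomposition the paper only develops afterwards (Lemma~\ref{lem:nabla_G}, Proposition~\ref{prop:skew-H}, Theorem~\ref{prop:min-cp}); note that you are right to check the two identities for $M$ directly rather than appeal to such a general statement, since the symbol $-\nu x+\mu y$ is not periodic in $y$ and so is not a multiplication-type element in the sense used there. Your handling of the operator-to-$\EE$ dictionary --- the left action conjugates the symbol, so the operator of multiplication by $-\nu i$ is the element $\nu i\,\Id_E$, and $-\tfrac{\pi i}{\mu}$ corresponds to $\tfrac{\pi i}{\mu}\Id_E$ --- is precisely the point the paper delegates to Proposition~7 of \cite{Kang1} (cf.\ Proposition~\ref{propA:curv}), and all your signs match the statement. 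What the paper's computation buys is self-containedness at the level of the defining series; what yours buys is brevity and modularity, at the cost of quoting the affine structure of $CC(\Xi)$ and the appendix results on $\nabla^0$.
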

\begin{proof}
Since $\nabla^1_Y$ and $\nabla^1_Z$ are the same as $\nabla_Y$ and $\nabla_Z$ given in \eqref{eq:min_conn}, to show that $\nabla^1$ is a compatible linear connection, we only need to check that $\nabla^1_X$ satisfies
\[\begin{split}
&\nabla^1_X(f\cdot \Phi)=\nabla^1(f)\cdot \Phi+f\cdot \delta_X(\Phi),\quad{and}\\
&\langle \nabla^1_X(f),g\rangle_R^D+\langle f,\nabla^1_X(g)\rangle_R^D=\delta_X(\langle f,g\rangle_R^D).
\end{split}\]
for all $f\in \Xi$, $\Phi\in \DD$ and $X\in \mathfrak{h}$. We compute
\[
\nabla^1_X(f\cdot \Phi)(x,y) =-\frac{\partial}{\partial y}(f\cdot \Phi)(x,y)+(\frac{\pi ci}{2\mu}x^2-\nu ix+\mu iy)(f\cdot \Phi)(x,y)\
\]
\[
=-\frac{\partial}{\partial y}\Big(\sum_q f(x+2q\mu,y+2q\nu)\overline{\Phi}(x+2q\mu,y+2q\nu,q)\Big)+(\frac{\pi ci}{2\mu}x^2-\nu ix+\mu iy)
\]
\[
\quad \times \Big(\sum_q f(x+2q\mu,y+2q\nu)\overline{\Phi}(x+2q\mu,y+2q\nu,q)\Big)
\]
\[
=-\sum_q \frac{\partial f}{\partial y}(x+2q\mu,y+2q\nu)\overline{\Phi}(x+2q\mu,y+2q\nu,q) -\sum_q \big(f(x+2q\mu,y+2q\nu)
\]
\[
\quad\quad \times\frac{\partial \overline{\Phi}}{\partial y}(x+2q\mu,y+2q\nu,q)\big)+ (\frac{\pi ci}{2\mu}x^2-\nu ix+\mu iy)\sum_q f(x+2q\mu,y+2q\nu)\overline{\Phi}(x+2q\mu,y+2q\nu,q)\\
\]
On the other hand,
\[\begin{split}
&(\nabla^1(f)\cdot \Phi)(x,y)+(f\cdot \delta_X(\Phi))(x,y)\\
&=\sum_q (\nabla^1_Xf)(x+2q\mu,y+2q\nu)\overline{\Phi}(x+2q\mu,y+2q\nu,q)+\sum_q f(x+2q\mu,y+2q\nu)\\
&\quad\times(\overline{\delta_X\Phi})(x+2q\mu,y+2q\nu,q)
\end{split}\]
\[
=\sum_q \Big(-\frac{\partial f}{\partial y}(x+2q\mu,y+2q\nu)+\big(\frac{\pi ci}{2\mu}(x+2q\mu)^2-\nu i(x+2q\mu)+\mu i(y+2q\nu)\big)
\]
\[
\quad \times f(x+2q\mu,y+2q\nu)\Big)\overline{\Phi}(x+2q\mu,y+2q\nu,q) + \sum_q f(x+2q\mu,y+2q\nu)
\]
\[
\quad \times\Big(-\frac{\partial \overline{\Phi}}{\partial y}(x+2q\mu,y+2q\nu,q) +\overline{2\pi icq}(x+2q\mu-q\mu)\overline{\Phi}(x+2q\mu,y+2q\nu,q)\Big)
\]
\[
=-\sum_q \frac{\partial f}{\partial y}(x+2q\mu,y+2q\nu)\overline{\Phi}(x+2q\mu,y+2q\nu,q) -\sum_q \big(f(x+2q\mu,y+2q\nu)
\]
\[
\quad \times\frac{\partial \overline{\Phi}}{\partial y}(x+2q\mu,y+2q\nu,q)\big) +\sum_q \Big(f(x+2q\mu,y+2q\nu)\overline{\Phi}(x+2q\mu,y+2q\nu,q)
\]
\[
\quad \times \Big( \frac{\pi ci}{2\mu}(x+2q\mu)^2-\nu i(x+2q\mu)+\mu i(y+2q\nu)-2\pi icq(x+q\mu)\Big)\Big)
\]
\[
=-\sum_q \frac{\partial f}{\partial y}(x+2q\mu,y+2q\nu)\overline{\Phi}(x+2q\mu,y+2q\nu,q) -\sum_q \big(f(x+2q\mu,y+2q\nu)
\]
\[
\quad \times\frac{\partial \overline{\Phi}}{\partial y}(x+2q\mu,y+2q\nu,q)\big) +\sum_q \Big(f(x+2q\mu,y+2q\nu)\overline{\Phi}(x+2q\mu,y+2q\nu,q)
\]
\[
\quad \times(\frac{\pi ci}{2\mu}x^2-\nu ix+\mu iy)\Big) = \nabla^1_X(f\cdot \Phi)(x,y).
\]
Thus
\[
\nabla^1_X(f\cdot \Phi)(x,y)=(\nabla^1(f)\cdot \Phi)(x,y)+(f\cdot \delta_X(\Phi))(x,y).
\]

For compatibility, first note that
\[\begin{split}
&\delta_X(\langle f, g\rangle_R^D)(x,y,p)\\
&=2 \pi icp(x-p\mu)(\langle f, g\rangle_R^D)(x,y,p)-\frac{\partial}{\partial y}(\langle f, g\rangle_R^D)(x,y,p)\\
&=\sum_k 2\pi icp (x-p\mu+k)\overline{e}(ckp(y-p\nu)) f(x+k,y)\overline{g}(x-2p\mu+k,y-2p\nu).\\
&\quad -\sum_k \overline{e}(ckp(y-p\nu))\frac{\partial f}{\partial y}(x+k,y)\overline{g}(x-2p\mu+k,y-2p\nu)\\
&\quad - \sum_k \overline{e}(ckp(y-p\nu)) f(x+k,y)\frac{\partial \overline{g}}{\partial y}(x-2p\mu+k,y-2p\nu).\\
\end{split}\]
 Also we compute
\[\begin{split}
&\langle \nabla^1_X(f),g\rangle_R^D(x,y,p)+\langle f,\nabla^1_X(g)\rangle(x,y,p)\\
&=\sum_k\overline{e}(ckp(y-p\nu))(\nabla^1_Xf)(x+k,y)\overline{g}(x-2p\mu+k,y-2p\nu)\\
&\quad +\sum_k\overline{e}(ckp(y-p\nu))f(x+k,y)(\overline{\nabla^1_Xg})(x-2p\mu+k,y-2p\nu)\\
&=\sum_k\overline{e}(ckp(y-p\nu))\Big(-\frac{\partial f}{\partial y}(x+k, y)+\big(\frac{\pi ci}{2\mu}(x+k)^2-\nu i(x+k)+\mu iy\big) f(x,y)\Big)\\
&\quad \times \overline{g}(x-2p\mu+k,y-2p\nu)\\
&+\sum_k \overline{e}(ckp(y-p\nu))f(x+k,y)\Big(-\frac{\partial \overline{g}}{\partial y}(x-2p\mu+k,y-2p\nu)\\
&\quad +\big(\overline{\frac{\pi c i}{2\mu}(x-2p\mu+k)^2-\nu i(x-2p\mu+k)+\mu i(y-2p\nu)}\big)\overline{g}(x-2p\mu+k, y-2p\nu)\Big)\\
\end{split}\]
\[\begin{split}
&=-\sum_k \overline{e}(ckp(y-p\nu))\frac{\partial f}{\partial y}(x+k,y)\overline{g}(x-2p\mu+k,y-2p\nu)\\
&\quad - \sum_k \overline{e}(ckp(y-p\nu)) f(x+k,y)\frac{\partial \overline{g}}{\partial y}(x-2p\mu+k,y-2p\nu)\\
&\quad +\sum_k \overline{e}(ckp(y-p\nu)) f(x+k,y)\overline{g}(x-2p\mu+k,y-2p\nu)\Big(\frac{\pi ci}{2\mu}(x+k)^2-\nu i(x+k)+\mu i y\\
&\quad -\frac{\pi ci}{2\mu}(x-2p\mu +k)^2+\nu i(x-2p\mu+k)-\mu i(y-2p\nu)\Big)\\
&=\sum_k \overline{e}(ckp(y-p\nu))\frac{\partial f}{\partial y}(x+k,y)\overline{g}(x-2p\mu+k,y-2p\nu)\\
&\quad - \sum_k \overline{e}(ckp(y-p\nu)) f(x+k,y)\frac{\partial \overline{g}}{\partial y}(x-2p\mu+k,y-2p\nu)\\
&\quad +\sum_k \overline{e}(ckp(y-p\nu)) f(x+k,y)\overline{g}(x-2p\mu+k,y-2p\nu)\big(2\pi icp(x+k-p\mu)\big)
\end{split}\]
Thus $\langle \nabla^1_X(f),g\rangle_R^D+\langle f,\nabla^1_X(g)\rangle_R^D=\delta_X(\langle f,g\rangle_R^D)$. Hence $\nabla^1$ is a compatible connection on $\Xi$.

Now we compute the curvature $\Theta_{\nabla^1}$ as follows. Fix $f\in \Xi$ and compute
\[\begin{split}
&(\Theta_{\nabla^1}(X,Y)\cdot f)(x,y)=\nabla^1_X(\nabla^1_Yf)(x,y)-\nabla^1_Y(\nabla^1_Xf)(x,y)-(\nabla^1_{[X,Y]}f)(x,y)\\
&=-\frac{\partial}{\partial y}(\nabla^1_Yf)(x,y)+\big(\frac{\pi ci}{2\mu}x^2-\nu ix+\mu iy\big)(\nabla^1_Yf)(x,y)+\frac{\partial}{\partial x}(\nabla^1_Xf)(x,y)-c(\nabla^1_Zf)(x,y)\\
&=-\frac{\partial}{\partial y}\Big(-\frac{\partial f}{\partial x}(x,y)\Big)+\big(\frac{\pi ci}{2\mu}x^2-\nu ix+\mu iy\big)\Big(-\frac{\partial f}{\partial x}(x,y)\Big)+\frac{\partial}{\partial x}\Big(-\frac{\partial f}{\partial y}(x,y)\\
&\quad +\big(\frac{\pi ci}{2\mu}x^2-\nu ix+\mu iy\big)f(x,y)\Big)-c\frac{\pi ix}{\mu}f(x,y)\\
&=\frac{\partial^2 f}{\partial y\partial x}(x,y)-\big(\frac{\pi ci}{2\mu}x^2-\nu ix+\mu iy\big)\frac{\partial f}{\partial x}(x,y)-\frac{\partial^2 f}{\partial y\partial x}(x,y)+\big(\frac{\pi ci}{2\mu}2x-\nu i\big)f(x,y)\\
&\quad + \big(\frac{\pi ci}{2\mu}x^2-\nu ix+\mu iy\big)\frac{\partial f}{\partial x}(x,y)-\frac{\pi icx}{\mu}f(x,y)\\
&=-\nu i f(x,y).
\end{split}\]
Thus $\Theta_{\nabla^1}(X,Y)=\nu i \id_E$. Also we compute
\[\begin{split}
&(\Theta_{\nabla^1}(X,Z)\cdot f)(x,y)=\nabla^1_X(\nabla^1_Zf)(x,y)-\nabla^1_Z(\nabla^1_Xf)(x,y)\\
&=-\frac{\partial}{\partial y}(\nabla^1_Zf)(x,y)+\big(\frac{\pi ci}{2\mu}x^2-\nu ix+\mu iy\big)(\nabla^1_Zf)(x,y)-\frac{\pi ix}{\mu}(\nabla^1_Xf)(x,y)\\
&=-\frac{\partial}{\partial y}\Big(\frac{\pi ix}{\mu}f(x,y)\Big)+\big(\frac{\pi ci}{2\mu}x^2-\nu ix+\mu iy\big)\big(\frac{\pi ix}{\mu}f(x,y)\Big)-\frac{\pi ix}{\mu}\Big(-\frac{\partial f}{\partial y}(x,y)\\
&\quad+\big(\frac{\pi ci}{2\mu}x^2-\nu ix+\mu iy\big)f(x,y)\Big)\\
&=0.
\end{split}\]
Thus $\Theta_{\nabla^1}(X,Z)=0$. Since $\nabla^1_Y$ and $\nabla^1_Z$ are the same as $\nabla^0_Y$ and $\nabla^0_Z$ given in \eqref{eq:min_conn}, $\Theta_{\nabla^1}(Y,Z)=\Theta_{\nabla^0}(Y,Z)=\frac{\pi i}{\mu}\Id_E$, which completes the proof.
\end{proof}

To prove the next proposition, we need the following lemma that shows how the compatible connection $\nabla^0$ given in \eqref{eq:min_conn} acts on the multiplication-type element of $\EE$ introduced in \cite{Kang1}.

\begin{lem}\label{lem:nabla_G}
Let $\nabla^0$ be the compatible connection on $\Xi$ given in \eqref{eq:min_conn}.  Let $\mathbb{G}$ be a skew-symmetric multiplication-type element of $\EE $. i.e.  $\mathbb{G}^*=-\mathbb{G}$ and $\mathbb{G}(x,y,p)=G(x,y)\delta_0(p)$, where $G$ is a skew-symmetric\footnote{According to Lemma~6 of \cite{Kang1}, $\mathbb{G}$ is skew symmetric if and only if the corresponding function $G$ is skew-symmetric, i.e. $\overline{G}(x,y)=-G(x,y)$.} differentiable function on $\RR\times \TT$. Then for $\xi\in \Xi$, we have
\begin{equation}\label{eq:conn_XG}
([\nabla^0_X, \mathbb{G}]\cdot \xi)=\frac{\partial G}{\partial y}(x,y)\xi(x,y),
\end{equation}

\begin{equation}\label{eq:conn_YG}
([\nabla^0_Y, \mathbb{G}]\cdot \xi)(x,y)=\frac{\partial G}{\partial x}(x,y)\xi(x,y),
\end{equation}

\begin{equation}\label{eq:conn_ZG}
([\nabla^0_Z, \mathbb{G}]\cdot \xi)(x,y)=0.
\end{equation}

\end{lem}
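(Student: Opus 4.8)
The plan is to first identify exactly how the element $\mathbb{G}$ acts on $\Xi$, since once this is pinned down all three identities follow from the Leibniz rule together with the skew-symmetry hypothesis. The left action of $\EE$ on $\Xi$ is given by $(\Psi\cdot f)(x,y)=\sum_{q}\overline{\Psi}(x,y,q)f(x+q,y)$. Because $\mathbb{G}$ is of multiplication type, i.e.\ $\mathbb{G}(x,y,p)=G(x,y)\delta_0(p)$ is supported only at $p=0$, the sum collapses to the single term $q=0$, and I would record the key structural fact
\[
(\mathbb{G}\cdot\xi)(x,y)=\overline{G}(x,y)\,\xi(x,y),
\]
so that $\mathbb{G}$ acts on $\Xi$ simply as pointwise multiplication by $\overline{G}$. (The conjugation here is the one place to be careful; it is what ultimately produces the correct sign in the final formulas.)

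With this in hand, I would compute each commutator $[\nabla^0_W,\mathbb{G}]=\nabla^0_W\circ\mathbb{G}-\mathbb{G}\circ\nabla^0_W$ directly from the formulas for $\nabla^0$ in \eqref{eq:min_conn}. For $W=X$ and $W=Y$ the essential mechanism is the same: the differential parts $-\partial_y$ and $-\partial_x$ of $\nabla^0_X,\nabla^0_Y$, when applied to the product $\overline{G}\,\xi$, split by the Leibniz rule into a term hitting $\overline{G}$ and a term hitting $\xi$; the latter is exactly what $\mathbb{G}\circ\nabla^0_W$ contributes, while the multiplicative potential terms in $\nabla^0$ (the factor $\tfrac{\pi ci}{2\mu}x^2$ in $\nabla^0_X$) are scalar multiplication operators in $(x,y)$ and hence commute with multiplication by $\overline{G}$, so they cancel in the commutator. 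This leaves
\[
([\nabla^0_X,\mathbb{G}]\cdot\xi)(x,y)=-\tfrac{\partial\overline{G}}{\partial y}(x,y)\,\xi(x,y),
\qquad
([\nabla^0_Y,\mathbb{G}]\cdot\xi)(x,y)=-\tfrac{\partial\overline{G}}{\partial x}(x,y)\,\xi(x,y).
\]
Invoking the skew-symmetry $\overline{G}=-G$ (as recorded in the footnote citing Lemma~6 of \cite{Kang1}) flips each sign and yields \eqref{eq:conn_XG} and \eqref{eq:conn_YG} respectively.

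For $W=Z$ the argument is even shorter: $\nabla^0_Z$ is pure multiplication by $\tfrac{\pi i x}{\mu}$, which is again a scalar multiplication in $(x,y)$ and therefore commutes with multiplication by $\overline{G}$; hence $[\nabla^0_Z,\mathbb{G}]=0$, giving \eqref{eq:conn_ZG}. I do not anticipate any genuine obstacle in this lemma—there is no analytic subtlety, and the only thing requiring care is the sign bookkeeping (the conjugate in the action formula combined with the skew-symmetry of $G$) and the observation that the multiplicative pieces of $\nabla^0$ drop out of the commutator automatically.
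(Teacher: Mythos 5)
Your proof is correct and follows essentially the same route as the paper's: both reduce to the fact that a skew-symmetric multiplication-type element acts on $\Xi$ as pointwise multiplication (with $\overline{G}=-G$), after which the Leibniz rule and the cancellation of the multiplicative potential terms of $\nabla^0$ give all three identities. The only cosmetic difference is that you derive the action fact $(\mathbb{G}\cdot\xi)(x,y)=\overline{G}(x,y)\xi(x,y)$ directly from the left-action formula and invoke skew-symmetry at the end, whereas the paper cites Proposition~7 of \cite{Kang1} to write $(\mathbb{G}\cdot\xi)=-G\xi$ from the outset.
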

\begin{proof}
Fix $\xi\in \Xi$. Then Proposition~7 of \cite{Kang1} implies that $(\mathbb{G}\cdot \xi)(x,y)=-G(x,y)\xi(x,y)$ for $\xi\in \Xi$ since $\mathbb{G}$ is a multiplication-type element of $\EE$.
We compute
\[\begin{split}
([\nabla^0_X, \mathbb{G}]\cdot \xi)(x,y) &=\nabla^0_X(\mathbb{G}\cdot \xi)(x,y)-(\mathbb{G}\cdot \nabla^0_X(\xi))(x,y)\\
&=-\frac{\partial}{\partial y}(\mathbb{G}\cdot \xi)(x,y)+\frac{\pi c i}{2\mu}x^2(\mathbb{G}\cdot \xi)(x,y) -(\mathbb{G}\cdot \nabla^0_X(\xi))(x,y)\\
&=-\frac{\partial}{\partial y}\big(-G(x,y)\xi(x,y)\big)+\frac{\pi c i}{2\mu} x^2 \big(-G(x,y)\xi(x,y)\big)\\
&\quad\quad\quad+G(x,y)\big(-\frac{\partial \xi}{\partial y}(x,y)+\frac{\pi c i}{2\mu}x^2\xi(x,y)\big)\\
&=\frac{\partial G}{\partial y}(x,y)\xi(x,y),
\end{split}\]
which gives equation \eqref{eq:conn_XG}. Also we compute
\[\begin{split}
([\nabla^0_Y, \mathbb{G}]\cdot \xi)(x,y)&=\nabla^0_Y(\mathbb{G}\cdot \xi)(x,y)-(\mathbb{G}\cdot \nabla^0_Y(\xi))(x,y)\\
&=-\frac{\partial}{\partial x}(\mathbb{G}\cdot \xi)(x,y)+G(x,y)(-\frac{\partial \xi}{\partial x}(x,y))\\
&=\frac{\partial G}{\partial x}(x,y) \xi(x,y),
\end{split}\]
which gives \eqref{eq:conn_YG}. To see \eqref{eq:conn_ZG},
\[\begin{split}
([\nabla^0_Z, \mathbb{G}]\cdot \xi)(x,y)&=\nabla^0_Z(\mathbb{G}\cdot \xi)(x,y)-(\mathbb{G}\cdot \nabla^0_Z(\xi))(x,y)\\
&=\frac{\pi i x}{\mu}(\mathbb{G}\cdot \xi)(x,y)+G(x,y)(\frac{\pi i x}{\mu}\xi(x,y))\\
&=\frac{\pi i x}{\mu}(-G(x,y)\xi(x,y))+G(x,y)(\frac{\pi i x}{\mu}\xi(x,y)) = 0
\end{split}\]

\end{proof}

\begin{prop}\label{prop:nabla-1-not-cp}
The compatible connection $\nabla^1$ with constant curvature given in Theorem~\ref{thm:const_conn2} is  neither a critical point nor a minimizer of $\YM$.
\end{prop}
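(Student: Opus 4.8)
The plan is to handle the two assertions separately, relying on the curvature of $\nabla^1$ already computed in Theorem~\ref{thm:const_conn2}: $\Theta_{\nabla^1}(X,Y)=\nu i\,\Id_E$, $\Theta_{\nabla^1}(X,Z)=0$, and $\Theta_{\nabla^1}(Y,Z)=\frac{\pi i}{\mu}\Id_E$. The feature that drives everything is that each component of this curvature is a scalar multiple of $\Id_E$; since $\Id_E$ acts on $\Xi$ as the identity operator (indeed $(\Id_E\cdot f)(x,y)=\sum_q\delta_0(q)f(x+q,y)=f(x,y)$), left multiplication by any scalar multiple of $\Id_E$ commutes with every $\nabla^1_W$, so all commutators appearing in the critical point equations collapse.

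First I would test $\nabla^1$ against the critical point equations \eqref{eq:critical_point}. Using skew-symmetry, $\Theta_{\nabla^1}(Y,X)=-\nu i\,\Id_E$, $\Theta_{\nabla^1}(Z,X)=0$, and $\Theta_{\nabla^1}(Z,Y)=-\frac{\pi i}{\mu}\Id_E$. Because all of these are scalar multiples of $\Id_E$ (or $0$), every bracket $[\nabla^1_W,\Theta_{\nabla^1}(\cdot,\cdot)]$ vanishes, so equations (1) and (2) reduce to $0=0$ and hold trivially. Equation (3), however, becomes $[\nabla^1_X,\Theta_{\nabla^1}(Z,X)]+[\nabla^1_Y,\Theta_{\nabla^1}(Z,Y)]-c\,\Theta_{\nabla^1}(X,Y)=0+0-c\nu i\,\Id_E=-c\nu i\,\Id_E$, which is nonzero since $c$ is a positive integer and $\nu\neq 0$. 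Thus $\nabla^1$ violates (3) and is not a critical point of $\YM$. The conceptual content is that the only curvature component distinguishing $\nabla^1$ from Lee's connection $\nabla^0$ is the now-nonvanishing $(X,Y)$-term $\nu i\,\Id_E$, and this is precisely the term that obstructs criticality through equation (3).

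For the minimizer claim I would compute $\YM(\nabla^1)$ from \eqref{eq:YM-functional} and compare it with $\YM(\nabla^0)$. Since $\Id_E^2=\Id_E$, one has $\{\Theta_{\nabla^1},\Theta_{\nabla^1}\}_E=(\nu i)^2\Id_E+0+(\tfrac{\pi i}{\mu})^2\Id_E=-(\nu^2+\tfrac{\pi^2}{\mu^2})\Id_E$, whereas $\{\Theta_{\nabla^0},\Theta_{\nabla^0}\}_E=-\tfrac{\pi^2}{\mu^2}\Id_E$ from \eqref{eq:min_curv}. By \eqref{eq:trace_E}, $\tau_E(\Id_E)=\int_0^{2\mu}\!\int_0^1 1\,dy\,dx=2\mu$, so $\YM(\nabla^1)=2\mu\nu^2+\tfrac{2\pi^2}{\mu}$ and $\YM(\nabla^0)=\tfrac{2\pi^2}{\mu}$. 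Hence $\YM(\nabla^1)=\YM(\nabla^0)+2\mu\nu^2>\YM(\nabla^0)$ because $\mu>0$ and $\nu\neq 0$. As $\nabla^0$ is itself a compatible connection on $\Xi$ with constant curvature, $\nabla^1$ is not a global minimizer, and it is not even a local minimizer in the sense of minimizing $\YM$ over all connections in $CC(\Xi)$ with constant curvature.

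These computations are short because the curvature is already in hand; the only step demanding care is the justification that left multiplication by a scalar multiple of $\Id_E$ commutes with each $\nabla^1_W$, so that every commutator in \eqref{eq:critical_point} drops out. I expect the main difficulty to be organizational rather than technical: one must pin the failure precisely to equation (3) and to the extra $2\mu\nu^2$ in the $\YM$ value, both of which trace back to the single nonvanishing component $\Theta_{\nabla^1}(X,Y)=\nu i\,\Id_E$ that separates $\nabla^1$ from $\nabla^0$.
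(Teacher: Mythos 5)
Your proof is correct and follows essentially the same route as the paper: both reduce the critical-point claim to the failure of equation (3) of \eqref{eq:critical_point}, which leaves the nonzero term $-c\nu i\,\Id_E$, and both rule out minimality by computing $\YM(\nabla^1)=2\mu\nu^2+\frac{2\pi^2}{\mu}>\frac{2\pi^2}{\mu}=\YM(\nabla^0)$. The only cosmetic difference is that you justify the vanishing of the commutators directly (scalar multiples of $\Id_E$ act as scalars on $\Xi$ and hence commute with every $\nabla^1_W$), whereas the paper invokes Lemma~\ref{lem:nabla_G} applied to the constant multiplication-type element; both justifications are valid.
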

\begin{proof}
We will first show that $\nabla^1$ does not satisfy (3) of \eqref{eq:critical_point} and hence $\nabla^1$ is not a critical point of $\YM$.
Note first that any curvature $\Theta_{\nabla'}$ of any compatible connection $\nabla'$ is a skew-symmetric element of $\EE$.
Since $\Theta_{\nabla^1}(X,Y)$ is a pure imaginary constant multiple of the identity element $\Id_E$ of $\EE$ for $X,Y\in\mathfrak{h}$, the curvature $\Theta_{\nabla^1}(X,Y)$ is a skew-symetric multiplication-type element of $\EE$.  Since $\Theta_{\nabla^1}(X,Y)=\nu i \Id_E$,  $\Theta_{\nabla^1}(X,Z)=0$, $\Theta_{\nabla^1}(Y,Z)=\frac{\pi i}{\mu}\Id_E$, and $\nabla^1_Y=\nabla_Y$, Lemma~\ref{lem:nabla_G} implies that
\[\begin{split}
&[\nabla^1_X,\Theta_{\nabla^1}(Z,X)]+[\nabla^1_Y, \Theta_{\nabla^1}(Z,Y)]-c\Theta_{\nabla^1}(X,Y)\\
&=0+[\nabla_Y, -\frac{\pi i}{\mu}\Id_E]-c\nu i\Id_E\\
&=-c\nu i\Id_E\ne 0.
\end{split}\]
Thus $\nabla^1$ does not satisfy (3) of \eqref{eq:critical_point}, and hence $\nabla^1$ is not a critical point of $\YM$.

To see that $\nabla^1$ does not even give a local minimum of $\YM$, we compare the value of $\YM(\nabla')$ to that of $\YM(\nabla^0)$, where $\nabla^0$ is the connection given in \eqref{eq:min_conn}. In fact, we have
\[\begin{split}
\YM(\nabla^1)&=-\tau_E(\{\Theta_{\nabla^1}, \Theta_{\nabla^1}\})=-\int_0^{2\mu} \int_0^1 (-\nu^2-\frac{\pi^2}{\mu^2})\, dy\, dx\\
&=2\mu\nu^2+\frac{2\pi^2}{\mu} >\frac{2\pi^2}{\mu}=\YM(\nabla^0).
\end{split}\]
Hence $\nabla^1$ is not a minimizer of $\YM$.

\end{proof}

Note that the proof of Proposition~\ref{prop:nabla-1-not-cp} gets a lot simpler once we characterize critical points and minimizers of $\YM$ with constant curvature in the next section. See Theorem~\ref{thm:critical-ioi}, Theorem~\ref{prop:min-cp}, and remarks after.

\section{Yang-Mills connections with constant curvature}\label{sec:YM-conn}

In this section, we investigate Yang-Mills connections on $\DD$ with constant curvature.
We first study how to identify critical points of the Yang-Mills functional $\YM$.
\begin{prop}\label{prop:critical}
Let $\Xi$ be the left $\EE$ --  right $\DD$ projective bimodule described in Section~\ref{sec:prelim} and let $\mathfrak{h}$ be the Heisenberg Lie algebra with basis $\{X,Y,Z\}$ with $[X,Y]=cZ$ given in \eqref{eq:basis-XYZ}. Suppose $\nabla$ is a compatible connection on $\Xi$ with curvature $\Theta_\nabla$.  If $\nabla$ is a critical point of the Yang-Mills functional $\YM$ given in \eqref{eq:YM-functional}, then $\Theta_\nabla(X,Y)=0$. 
\end{prop}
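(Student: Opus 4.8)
The plan is to read the conclusion off the third critical-point equation in \eqref{eq:critical_point}, namely $[\nabla_X,\Theta_\nabla(Z,X)]+[\nabla_Y,\Theta_\nabla(Z,Y)]-c\,\Theta_\nabla(X,Y)=0$, which is the only one of the three that contains $\Theta_\nabla(X,Y)$ as a bare term rather than inside a commutator. The guiding principle is that the trace $\tau_E$ annihilates every commutator of the form $[\nabla_W,T]$ with $T\in\EE$. Indeed, writing a general compatible connection as $\nabla=\nabla^0+a$ for an $\EE$-valued one-form $a$, one has $[\nabla_W,T]=[\nabla^0_W,T]+[a_W,T]$; the second term has zero trace because $\tau_E$ is a trace, while the first has zero trace because, by the explicit formulas \eqref{eq:min_conn} and the computations behind Lemma~\ref{lem:nabla_G}, $[\nabla^0_W,\cdot]$ is a $\ast$-derivation that either differentiates along the torus directions or multiplies the $p$-th mode by a scalar proportional to $p$, neither of which survives the integration defining $\tau_E$ in \eqref{eq:trace_E}. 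Applying $\tau_E$ to the third equation then kills the two commutator terms and leaves $c\,\tau_E(\Theta_\nabla(X,Y))=0$, hence $\tau_E(\Theta_\nabla(X,Y))=0$.

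The harder half is upgrading this trace identity to the operator identity $\Theta_\nabla(X,Y)=0$, and here I would exploit the central role of $Z$ in the Heisenberg algebra. Decompose $\EE$ by the integer index $p$; the crucial structural input is that $[\nabla_Z,\cdot]$ acts, up to the inner perturbation coming from $a_Z$, as multiplication of the $p$-th mode by $\tfrac{\pi i p}{\mu}$, so it annihilates the multiplication-type ($p=0$) part and is invertible on every $p\neq 0$ mode (this is exactly the content of \eqref{eq:conn_ZG}). Reading the first two equations of \eqref{eq:critical_point} in the $Z$-direction gives $[\nabla_Z,\Theta_\nabla(X,Z)]=-[\nabla_Y,\Theta_\nabla(X,Y)]$ and $[\nabla_Z,\Theta_\nabla(Y,Z)]=[\nabla_X,\Theta_\nabla(X,Y)]$; comparing their $p=0$ components forces the $p=0$ part of $\Theta_\nabla(X,Y)$ into $\ker[\nabla_X,\cdot]\cap\ker[\nabla_Y,\cdot]$, which for this module consists only of scalar multiples of $\Id_E$, and combined with $\tau_E(\Theta_\nabla(X,Y))=0$ that scalar must vanish. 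For the $p\neq 0$ modes one substitutes the same two equations into the third and uses the invertibility of $[\nabla_Z,\cdot]$ there together with the curvature commutation identity $[\,[\nabla_X,\cdot],[\nabla_Y,\cdot]\,]=c\,[\nabla_Z,\cdot]+[\Theta_\nabla(X,Y),\cdot]$.

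The main obstacle is precisely this last step: purely formal manipulation of \eqref{eq:critical_point} together with the Bianchi identity is self-consistent and does \emph{not} by itself force $\Theta_\nabla(X,Y)=0$, so the argument cannot be carried out at the abstract Yang--Mills level and must use the concrete geometry of the module. The two delicate points are that a general compatible connection need not preserve the $p$-grading (the inner perturbations $[a_W,\cdot]$ mix modes and produce the nonlinear commutator term in the identity above), and that the identification of $\ker[\nabla_X,\cdot]\cap\ker[\nabla_Y,\cdot]$ with $\CC\,\Id_E$ relies on the explicit derivations in \eqref{eq:min_conn}. Controlling these grading-breaking and nonlinear contributions—most cleanly by first normalizing $\nabla_Z$ within its gauge orbit so that the grading is respected, and then handling the residual nonlinear terms—is where the real work lies.
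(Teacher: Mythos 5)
Your opening move is correct and worth keeping: since $\tau_E$ is a trace it kills $[a_W,T]$ for any inner perturbation $a_W$, and since it is invariant under the derivations $[\nabla^0_W,\cdot]$ (differentiation in the torus directions, or multiplication of the $p$-th mode by a multiple of $p$, as you say) it kills $[\nabla^0_W,T]$ as well; applying $\tau_E$ to equation (3) of \eqref{eq:critical_point} therefore gives $\tau_E(\Theta_\nabla(X,Y))=0$. But everything after that is a plan, not a proof, and you say so yourself. The reduction of the $p=0$ component to $\ker[\nabla_X,\cdot]\cap\ker[\nabla_Y,\cdot]$ needs the operators $[\nabla_W,\cdot]$ to preserve the $p$-grading of $\EE$, which fails as soon as $\nabla=\nabla^0+a$ has a nonzero inner part; the gauge normalization of $\nabla_Z$ that is supposed to restore the grading is never exhibited; and, worse, the step you propose for the $p\neq0$ modes is vacuous: substituting (1) and (2) into (3) via the invertibility of $[\nabla_Z,\cdot]$ and the identity $[\,[\nabla_X,\cdot],[\nabla_Y,\cdot]\,]=c[\nabla_Z,\cdot]+[\Theta_\nabla(X,Y),\cdot]$ returns $c\,\Theta_\nabla(X,Y)=c\,\Theta_\nabla(X,Y)$, a tautology. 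The ingredient that actually closes the linearized problem at $\nabla^0$ is one you never invoke: the curvature components are not arbitrary solutions of (1)--(3) but are \emph{exact} (they satisfy the Bianchi identity as the curvature of the very connection in the equations), and on a $p\neq 0$ mode exactness together with (1)--(2) forces
\[
\bigl([\nabla^0_X,\cdot\,]^2+[\nabla^0_Y,\cdot\,]^2\bigr)\,\Theta(X,Y)=\tfrac{\pi^2p^2}{\mu^2}\,\Theta(X,Y),
\]
which is impossible for a nonzero element because the left-hand operator is negative semidefinite (the $[\nabla^0_W,\cdot]$ are skew-adjoint for the $\tau_E$-inner product) while the eigenvalue on the right is strictly positive. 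Without this positivity input, or something replacing it, your outline cannot be completed; as submitted it establishes only the trace identity, not the Proposition.

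The comparison with the paper is instructive, because the paper's proof is precisely the kind of purely formal manipulation you declare impossible: it ``interchanges $X$ and $Y$'' in (3) to produce
\[
(3)'\colon\quad [\nabla_Y,\Theta_\nabla(Z,Y)]+[\nabla_X,\Theta_\nabla(Z,X)]-c\,\Theta_\nabla(Y,X)=0,
\]
and subtracts (3) to get $2c\,\Theta_\nabla(X,Y)=0$. Your skepticism here is well founded, and it cuts against the paper: equations (1)--(3) are the complete set of Euler--Lagrange equations for $\YM$, and relabelling the basis as $\{Y,X,Z\}$ flips the structure constant ($[Y,X]=-cZ$), so re-deriving the third equation in the relabelled basis reproduces (3) verbatim, not $(3)'$; moreover, given (3), the assertion $(3)'$ is \emph{equivalent} to the conclusion $\Theta_\nabla(X,Y)=0$, so invoking it without independent justification is circular. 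In short, you and the paper take entirely different routes: the paper's route is a two-line symmetry argument whose key step is unjustified, while your route (trace identity for the $p=0$ part, mode analysis and concrete geometry for $p\neq0$) is the one that can genuinely work --- it provably does at the linearized level --- but you have not carried it out, so the Proposition is left unproven by your proposal as written.
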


\begin{proof}
If $\nabla$ is a critical point of $\YM$, then $\nabla$ satisfies (1), (2) and (3) of \eqref{eq:critical_point}. By interchanging $X$ and $Y$ in (3), we obtain
\[
(3)':\; [\nabla_Y,\Theta_\nabla(Z,Y)]+[\nabla_X, \Theta_\nabla(Z,X)]-c\Theta_\nabla(Y,X)=0.
\]
Then by substracting (3) of \eqref{eq:critical_point} from $(3)'$, we obtain
\[
-c\Theta_\nabla(Y,X)+c\Theta_\nabla(X,Y)=0.
\]
Since $\Theta_\nabla(X,Y)=-\Theta_\nabla(Y,X)$ and $c>0$, we get $\Theta_\nabla(X,Y)=0$, which proves the desired result.
\end{proof}

We remark that the converse of Proposition~\ref{prop:critical} is not necessarily true in general. 
However, for $\nabla$ having constant curvature, we obtain the following result.

\begin{thm}\label{thm:critical-ioi}
Let $\Xi$ and $\mathfrak{h}$ be as in Proposition~\ref{prop:critical}. Suppose a compatible connection $\nabla$ has  constant curvature $\Theta_\nabla$.  Then $\nabla$ is a critical point of the Yang-Mills functional $\YM$ given in \eqref{eq:YM-functional} if and only if $\Theta_\nabla(X,Y)=0$. 
\end{thm}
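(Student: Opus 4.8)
The plan is to work directly from the characterization of critical points by the three equations in \eqref{eq:critical_point}: a compatible connection $\nabla$ is a critical point of $\YM$ exactly when it satisfies (1), (2) and (3). The forward implication — that a critical point must satisfy $\Theta_\nabla(X,Y)=0$ — is already supplied by Proposition~\ref{prop:critical}, which needs no constant-curvature hypothesis, so I would simply invoke it. The genuine content of the theorem is the converse, and this is where the constant-curvature assumption enters.

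For the converse, the key observation is that when $\nabla$ has constant curvature each of the values $\Theta_\nabla(X,Y)$, $\Theta_\nabla(X,Z)$, $\Theta_\nabla(Y,Z)$ is a (necessarily pure imaginary, by skew-symmetry) scalar multiple of $\Id_E$. I would first record that any such element commutes with every $\nabla_W$: if $\Theta_\nabla(U,V)=\lambda\,\Id_E$ with $\lambda\in\CC$, then since $\Id_E$ acts on $\Xi$ as the identity, for $\xi\in\Xi$ one has $[\nabla_W,\lambda\,\Id_E]\xi=\nabla_W(\lambda\xi)-\lambda\,\nabla_W(\xi)=0$ by the $\CC$-linearity of $\nabla_W$. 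Consequently every bracket $[\nabla_W,\Theta_\nabla(U,V)]$ occurring in \eqref{eq:critical_point} vanishes.

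Assuming $\Theta_\nabla(X,Y)=0$, equations (1) and (2) then hold automatically, being sums of such vanishing brackets, while equation (3) collapses to $-c\,\Theta_\nabla(X,Y)=0$, which holds since $\Theta_\nabla(X,Y)=0$. Thus all three equations are satisfied and $\nabla$ is a critical point, giving the converse. I do not anticipate a serious obstacle; the single point requiring care is the justification that constant curvature forces the components to be scalar multiples of $\Id_E$ together with the resulting vanishing of the brackets. This is the same mechanism underlying the computation in Lemma~\ref{lem:nabla_G}, specialized to constant (hence $x$- and $y$-independent) multiplication-type elements, but phrased here for an arbitrary compatible connection rather than for $\nabla^0$.
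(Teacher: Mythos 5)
Your proposal is correct, and its logical skeleton matches the theorem, but the way you justify the key step is genuinely different from the paper's. For the forward direction the paper does not simply cite Proposition~\ref{prop:critical}; it re-derives the conclusion inside the proof by showing all brackets $[\nabla_W,\Theta_\nabla(U,V)]$ vanish and then reading $\Theta_\nabla(X,Y)=0$ off equation (3) of \eqref{eq:critical_point}. Your shortcut of invoking Proposition~\ref{prop:critical} is legitimate and cleaner, and it makes visible that the constant-curvature hypothesis is only needed for the converse. For the converse, both arguments rest on the same mechanism (constant curvature components commute with the operators $\nabla_W$), but the paper establishes this by decomposing $\nabla=\nabla^0+\mathbb{H}$ with $\mathbb{H}$ skew-symmetric, applying Lemma~\ref{lem:nabla_G} to get $[\nabla^0_W,a_ji\,\Id_E]=0$ (constant functions have vanishing partials), and using centrality of $\Id_E$ in $\EE$ to get $[\mathbb{H}_W,a_ji\,\Id_E]=0$; your argument bypasses both the decomposition and Lemma~\ref{lem:nabla_G} by noting that $\lambda\,\Id_E$ acts on $\Xi$ as a scalar operator, and any scalar operator commutes with the $\CC$-linear map $\nabla_W$. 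This is more elementary and self-contained; what the paper's route buys is that the same lemma does the work in the non-scalar case too (it is reused in Theorems~\ref{thm:YM-connection} and~\ref{thm:min-but-nc}, where the perturbations are genuinely multiplication-type, not constant). One cosmetic caveat: under the paper's conventions the left action of $\EE$ on $\Xi$ carries a complex conjugate, $(\Psi\cdot f)(x,y)=\sum_q\overline{\Psi}(x,y,q)f(x+q,y)$, so $\lambda\,\Id_E$ acts as multiplication by $\overline{\lambda}$ rather than $\lambda$ (this is why $\Theta_{\nabla^0}(Y,Z)=\frac{\pi i}{\mu}\Id_E$ acts as $-\frac{\pi i}{\mu}$ in Proposition~\ref{propA:curv}); your commutator computation should read $[\nabla_W,\lambda\,\Id_E]\xi=\nabla_W(\overline{\lambda}\xi)-\overline{\lambda}\nabla_W(\xi)=0$, which of course still vanishes, so the conclusion is unaffected.
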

\begin{proof}
First note that since $\nabla$ is a compatible connection, we can write $\nabla=\nabla^0+\mathbb{H}$, where $\nabla^0$ is the compatible connection given in \eqref{eq:min_conn} and $\mathbb{H}$ is a linear map from $\mathfrak{h}$ into the set of skew-symmetric elements of $\EE$. i.e. $(\mathbb{H}_X)^*=-\mathbb{H}_X$ for all $X\in \mathfrak{h}$.

Now suppose that $\nabla$ is a critical point. Then $\nabla$ satisfies (1),(2) and (3) in \eqref{eq:critical_point}. Since $\nabla$ has constant curvature, we can write $\Theta_\nabla(X,Y)=a_1 i \Id_E$, $\Theta_\nabla(X,Z)=a_2 i \Id_E$ and $\Theta_\nabla(Y,Z)=a_3 i\Id_E$, where $a_1, a_2, a_3\in \R$. Then by Lemma~\ref{lem:nabla_G} we get
\[
[\nabla^0_X, a_j i \Id_E]=0, \quad [\nabla^0_Y, a_j i \Id_E]=0, \quad \text{and} \quad [\nabla^0_Z, a_j i \Id_E]=0.
\]
for $j=1,2,3$, and hence
\[
[\nabla_X, a_j i\id_E]=[\nabla^0_X+\mathbb{H}, a_j i \Id_E]=[\nabla^0_X, a_j i \Id_E]+ [\mathbb{H}, a_j i \Id_E]=0+0=0
\]
for $j=1,2,3$. Similarly then we have
\[
[\nabla_Y, a_j i\Id_E]=0 \quad \text{and} \quad [\nabla_Z, a_j i\Id_E]=0
\]
for $j=1,2,3$. Thus (3) of \eqref{eq:critical_point} gives $\Theta_\nabla(X,Y)=0$.

On the other hand, if $\Theta_\nabla(X,Y)=0$, then one can immediately see that $\nabla$ satisfies (1), (2) and (3) of \eqref{eq:critical_point} since $\Theta_\nabla(X,Z)$ and $\Theta_\nabla(Y,Z)$ are constant. Hence $\nabla$ is a critical point of $\YM$.

\end{proof}

One can now immediately see that  the compatible connection $\nabla^1$ with constant curvature given in Theorem~\ref{thm:const_conn2} is not a critical point of $\YM$ by Theorem~\ref{thm:critical-ioi}  since $\Theta_{\nabla^1}(X,Y)=\nu i \ne 0$.

 The following proposition shows that a minimizing connection $\nabla$ with constant curvature should have a certain form of constant curvature, and thus $\nabla$ gives a critical point of $\YM$.

\begin{thm}\label{prop:min-cp}

Let $\Xi$ be the left $\EE$ and right $\DD$ projective bimodule described in Section~\ref{sec:prelim}.
Let $\nabla$ be a compatible connection on $\Xi$ over $\DD$ with constant curvature. Then $\nabla$ is a minimizer of $\YM$ subject to the constant curvature constraint, in the sense that $\YM(\nabla) \le \YM(\nabla')$ for a compatible connection $\nabla'$ with constant curvature if and only if  the curvature $\Theta_\nabla$ is the same as the curvature $\Theta_{\nabla^0}$, where $\nabla^0$ is the compatible connection given in \eqref{eq:min_conn}, i.e.

\begin{equation}\label{eq:curv-min}
\Theta_{\nabla}(X,Y)=0,\;\;\Theta_{\nabla}(X,Z)=0,\;\;\Theta_{\nabla}(Y,Z)=\frac{\pi i}{\mu}\Id_E,
\end{equation}

\end{thm}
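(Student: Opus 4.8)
The plan is to reduce the statement to a three-parameter optimization, showing that two of the parameters describing a constant curvature are pinned down by a topological (Chern--Weil type) constraint while the third is driven to zero by the Yang--Mills functional. First I would record the value of $\YM$ on a constant curvature connection. Since the curvature of a compatible connection is skew-symmetric and $\EE$-valued, constant curvature means we may write $\Theta_\nabla(X,Y)=a_1 i\,\Id_E$, $\Theta_\nabla(X,Z)=a_2 i\,\Id_E$ and $\Theta_\nabla(Y,Z)=a_3 i\,\Id_E$ with $a_1,a_2,a_3\in\R$, exactly as in the proof of Theorem~\ref{thm:critical-ioi}. Substituting into \eqref{eq:YM-functional} and using $(a_j i\,\Id_E)^2=-a_j^2\,\Id_E$ gives $\{\Theta_\nabla,\Theta_\nabla\}_E=-(a_1^2+a_2^2+a_3^2)\,\Id_E$, and since $\tau_E(\Id_E)=2\mu$ by \eqref{eq:trace_E} I obtain
\[
\YM(\nabla)=2\mu\,(a_1^2+a_2^2+a_3^2).
\]

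The heart of the argument is to show that $a_2=0$ and $a_3=\pi/\mu$ hold for \emph{every} constant curvature compatible connection, so that only $a_1$ remains free. For this I would prove that $\tau_E(\Theta_\nabla(X,Z))$ and $\tau_E(\Theta_\nabla(Y,Z))$ do not depend on $\nabla$. Writing $\nabla=\nabla^0+\mathbb{H}$ as in Theorem~\ref{thm:critical-ioi}, a direct expansion of the curvature gives
\[
\Theta_\nabla(A,B)-\Theta_{\nabla^0}(A,B)=[\nabla^0_A,\mathbb{H}_B]-[\nabla^0_B,\mathbb{H}_A]+[\mathbb{H}_A,\mathbb{H}_B]-\mathbb{H}_{[A,B]}.
\]
Applying $\tau_E$, the commutator $[\mathbb{H}_A,\mathbb{H}_B]$ has vanishing trace; each term $[\nabla^0_A,h]$ is the derivation on $\EE$ induced by $\nabla^0$ evaluated at $h\in\EE$, which $\tau_E$ annihilates because $\tau_E$ is invariant under the induced action; and $\mathbb{H}_{[A,B]}=0$ whenever $[A,B]=0$. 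Since $[X,Z]=[Y,Z]=0$, both traces therefore equal their values at $\nabla^0$, namely $\tau_E(\Theta_\nabla(X,Z))=0$ and $\tau_E(\Theta_\nabla(Y,Z))=\frac{\pi i}{\mu}\,\tau_E(\Id_E)=2\pi i$ by \eqref{eq:min_curv}. For constant curvature these read $2\mu a_2 i=0$ and $2\mu a_3 i=2\pi i$, forcing $a_2=0$ and $a_3=\pi/\mu$. By contrast $[X,Y]=cZ\neq 0$, so $\tau_E(\Theta_\nabla(X,Y))$ changes by $-c\,\tau_E(\mathbb{H}_Z)$ and $a_1$ is unconstrained; this is consistent with the connection $\nabla^1$ of Theorem~\ref{thm:const_conn2}, which realizes $a_1=\nu\neq 0$.

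With these constraints the functional becomes $\YM(\nabla)=2\mu a_1^2+2\pi^2/\mu$ on every constant curvature connection, while $\YM(\nabla^0)=2\pi^2/\mu$. For the ``if'' direction, if $\Theta_\nabla=\Theta_{\nabla^0}$ then $a_1=0$ and $\YM(\nabla)=2\pi^2/\mu\le\YM(\nabla')$ for any constant curvature $\nabla'$, so $\nabla$ minimizes subject to the constant curvature constraint. For the ``only if'' direction, a minimizer satisfies $\YM(\nabla)\le\YM(\nabla^0)=2\pi^2/\mu$, which forces $a_1=0$; together with the pinned values $a_2=0$ and $a_3=\pi/\mu$ this yields precisely \eqref{eq:curv-min}.

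The main obstacle is the trace-invariance invoked in the second step, namely the vanishing of $\tau_E([\nabla^0_A,h])$ for arbitrary $h\in\EE$. For multiplication-type $h$ this can be checked concretely from Lemma~\ref{lem:nabla_G} together with the $y$-periodicity of the defining function and an integration by parts. For a general skew-symmetric $\mathbb{H}_A$ one must instead identify $[\nabla^0_A,\cdot]$ with the derivation on $\EE$ induced by $\nabla^0$ (well defined because $\Xi$ is an equivalence bimodule, so $[\nabla^0_A,h]$ is a right $\DD$-linear endomorphism, hence left multiplication by an element of $\EE$) and then invoke the $L$-invariance of $\tau$, and hence of $\tau_E$, to conclude that this derivation is annihilated by the trace. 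Establishing this cleanly for all of $\EE$, rather than only for multiplication-type elements, is the delicate point on which the whole characterization rests.
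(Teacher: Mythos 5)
Your proof is correct, and it rests on the same two pillars as the paper's own proof --- the decomposition $\nabla=\nabla^0+\mathbb{H}$ with each $\mathbb{H}_A$ a skew-symmetric element of $\EE$, and the fact (Lemma~2.2 of \cite{CR}, which the paper cites at exactly this point) that $\tau_E$ annihilates both commutators $[\mathbb{H}_A,\mathbb{H}_B]$ and the induced derivations $[\nabla^0_A,h]$ --- but you deploy them in a genuinely different order. The paper expands $\YM(\nabla)$ about $\nabla^0$, uses the trace lemma only to kill the cross term $\tau_E\bigl(\tfrac{2\pi i}{\mu}\Psi(Y,Z)\bigr)$, arrives at $\YM(\nabla)=\tfrac{2\pi^2}{\mu}+2\mu(a_1^2+a_2^2+a_3^2)$ with the $a_j$ parametrizing the correction $\Psi$, and then lets the minimality hypothesis force all three coefficients to vanish at once. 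You instead prove a Chern--Weil-type rigidity statement first: because $[X,Z]=[Y,Z]=0$, the traces $\tau_E(\Theta_\nabla(X,Z))$ and $\tau_E(\Theta_\nabla(Y,Z))$ are the same for \emph{every} compatible connection, which pins $\Theta_\nabla(X,Z)=0$ and $\Theta_\nabla(Y,Z)=\frac{\pi i}{\mu}\Id_E$ for every constant-curvature connection before any minimization; the functional then reduces to the one-parameter expression $2\mu a_1^2+\tfrac{2\pi^2}{\mu}$, and both directions of the equivalence are immediate. Your route buys a strictly stronger intermediate result --- a classification of the possible constant curvatures on $\Xi$ (only $\Theta_\nabla(X,Y)$ is free) --- which the paper leaves implicit; the paper's route is more compact. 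On the ``delicate point'' you flag: what you need is precisely Lemma~2.2 of \cite{CR} (fullness of the inner products, the Leibniz rule for the left inner product coming from compatibility, and $\delta$-invariance of $\tau$), so you may simply cite it, as the paper does, rather than reprove it.

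One side remark in your write-up should be corrected, although it does not affect the validity of the proof. You claim your rigidity computation is ``consistent with'' the connection $\nabla^1$ of Theorem~\ref{thm:const_conn2}; in fact it contradicts it. Your own formula gives $\tau_E(\Theta_\nabla(X,Y))-\tau_E(\Theta_{\nabla^0}(X,Y))=-c\,\tau_E(\mathbb{H}_Z)$, and for $\nabla^1$ one has $\nabla^1_Z=\nabla^0_Z$, i.e.\ $\mathbb{H}_Z=0$, while $\Theta_{\nabla^1}(X,Y)=\nu i\,\Id_E$ has trace $2\mu\nu i\ne 0$. The tension is resolved by observing that your argument applies only to maps whose difference from $\nabla^0$ lies in $\EE$: for $\nabla^1$ the difference in the $X$-direction is multiplication by $-\nu ix+\mu iy$, which is not $1$-periodic in $y$ and hence is not an element of $\EE$ (indeed $\nabla^1_X$ does not carry $y$-periodic functions to $y$-periodic functions, so it does not preserve $\Xi$). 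Thus your rigidity statement is sound, and what it actually exposes is a defect in that example rather than a gap in your proof. If you want a genuine constant-curvature connection realizing $a_1\ne 0$, take $\nabla=\nabla^0+\mathbb{H}$ with $\mathbb{H}_X=\mathbb{H}_Y=0$ and $\mathbb{H}_Z=\beta i\,\Id_E$ for $\beta\in\R\setminus\{0\}$; then $\Theta_\nabla(X,Y)=-c\beta i\,\Id_E$, $\Theta_\nabla(X,Z)=0$, and $\Theta_\nabla(Y,Z)=\frac{\pi i}{\mu}\Id_E$, in full agreement with your formula since $\tau_E(\mathbb{H}_Z)=2\mu\beta i$.
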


\begin{proof}
 Suppose that  $\nabla$ is a minimizer of $\YM$ subject to the constant curvature constraint, in the sense that $\YM(\nabla) \le \YM(\nabla')$ for a compatible connection $\nabla'$ with constant curvature.   In particular, $\YM(\nabla)\le \YM(\nabla^0)$, where $\nabla^0$ is the compatible connection on $\Xi$ given in \eqref{eq:min_conn}.   Since $\nabla$ is a compatible connection on $\Xi$, we have $\nabla=\nabla^0+\mathbb{H}$ for a skew-symmetric element $\mathbb{H}\in \EE$.
 Then the curvature of $\nabla$ is given by
\[\begin{split}
&\Theta_{\nabla}(X,Y)=\Theta_{\nabla^0}(X,Y)+\Psi(X,Y)=\Psi(X,Y),\\
&\Theta_{\nabla}(X,Z)=\Theta_{\nabla^0}(X,Z)+\Psi(X,Z)=\Psi(X,Z),\\
&\Theta_{\nabla}(Y,Z)=\Theta_{\nabla^0}(Y,Z)+\Psi(Y,Z)=\frac{\pi i}{\mu} \Id_E +\Psi(Y,Z),
\end{split}\]
where
\[\begin{split}
&\Psi(X,Y)=[\nabla^0_X,\mathbb{H}_Y]-[\nabla^0_Y,\mathbb{H}_X]+[\mathbb{H}_X,\mathbb{H}_Y]-\mathbb{H}_{[X,Y]},\\
&\Psi(X,Z)=[\nabla^0_X, \mathbb{H}_Z]-[\nabla^0_Z, \mathbb{H}_X]+[\mathbb{H}_X,\mathbb{H}_Z],\\
&\Psi(Y,Z)=[\nabla^0_Y, \mathbb{H}_Z]-[\nabla^0_Z, \mathbb{H}_Y]+[\mathbb{H}_Y,\mathbb{H}_Z].
\end{split}\]
Since we assume that the curvature of $\nabla$ is constant, we have
\[
\Psi(X,Y)=a_1 i \Id_E, \quad \Psi(X,Z)=a_2 i \Id_E, \quad \Psi(Y,Z)=a_3 i \Id_E
\]
for some $a_1, a_2, a_3 \in \R$.

Note that $\YM(\nabla^0)=-\tau_E(\{\Theta_{\nabla^0}, \Theta_{\nabla^0}\})=-\tau_E((\frac{\pi i}{\mu} \Id_E)^2)=-\int_0^{2\mu}\int_0^1 (-\frac{\pi^2}{\mu^2})dy\, dx=\frac{2\pi^2}{\mu}$. 
Also note that $\tau_E(\Psi(Y,Z))=\tau_E([\nabla^0_Y, \mathbb{H}_Z]-[\nabla^0_Z, \mathbb{H}_Y]+[\mathbb{H}_Y
,\mathbb{H}_Z])=0$ by Lemma~2.2 of \cite{CR}.

Then we have
\begin{equation}\label{eq:YM-min}
\begin{split}
\YM(\nabla)&=-\tau_E(\{\Theta_\nabla, \Theta_\nabla\})=-\tau_E((\Theta_{\nabla}(X,Y))^2+(\Theta_{\nabla}(X,Z))^2+(\Theta_{\nabla}(Y,Z))^2)\\
&= -\tau_E((\Psi(X,Y))^2+(\Psi(X,Z))^2 + ((\frac{\pi i}{\mu}\Id_E+\Psi(Y,Z))^2)\\
&=\frac{2\pi^2}{\mu}-\tau_E\big(\frac{2\pi i}{\mu}\Psi(Y,Z)\big)  -\tau_E\big((\Psi(X,Y))^2+(\Psi(X,Z))^2+(\Psi(Y,Z))^2\big)\\
&=\frac{2\pi^2}{\mu}-\tau_E(-a_1^2 \Id_E-a_2^2 \Id_E -a_3^2 \Id_E)\\
&=\frac{2\pi^2}{\mu}+2\mu(a_1^2+a_2^2+a_3^2) \; \le \frac{2\pi^2}{\mu}=\YM(\nabla^0).
\end{split}
\end{equation}
Thus we should have $a_1^2+a_2^2+a_3^2=0$ since $\mu>0$, and hence $a_1=a_2=a_3=0$.
This implies that $\Psi(X,Y)=\Psi(X,Z)=\Psi(Y,Z)=0$. Therefore, the curvature of $\nabla$ is given by
\[
\Theta_{\nabla}(X,Y)=0, \quad \Theta_{\nabla}(X,Z)=0, \quad \Theta_{\nabla}(Y,Z)=\frac{\pi i }{\mu}\Id_E.
\]

Conversely, suppose that $\nabla$ has constant curvature of the form given by \eqref{eq:curv-min}. To show that $\nabla$ is a minimizer of $\YM$ subject to the constant curvature constraint,  consider $\nabla'=\nabla+\mathbb{F}$ with constant curvature  $\Theta_{\nabla'}$ for a skew-symmetric element $\mathbb{F}\in \EE$. 
Then
\[\begin{split}
&\Theta_{\nabla'}(X,Y)=\Theta_{\nabla}(X,Y)+\Psi(X,Y)=\Psi(X,Y),\\
&\Theta_{\nabla'}(X,Z)=\Theta_{\nabla}(X,Z)+\Psi(X,Z)=\Psi(X,Z),\\
&\Theta_{\nabla'}(Y,Z)=\Theta_{\nabla}(Y,Z)+\Psi(Y,Z)=\frac{\pi i}{\mu} \Id_E +\Psi(Y,Z),
\end{split}\]
where
\[\begin{split}
&\Psi(X,Y)=[\nabla_X,\mathbb{F}_Y]-[\nabla_Y,\mathbb{F}_X]+[\mathbb{F}_X,\mathbb{F}_Y]-\mathbb{F}_{[X,Y]},\\
&\Psi(X,Z)=[\nabla_X, \mathbb{F}_Z]-[\nabla_Z, \mathbb{F}_X]+[\mathbb{F}_X,\mathbb{F}_Z],\\
&\Psi(Y,Z)=[\nabla_Y, \mathbb{F}_Z]-[\nabla_Z, \mathbb{F}_Y]+[\mathbb{F}_Y,\mathbb{F}_Z].
\end{split}\]
Since $\nabla'$ has constant curvature, we should have
\[\begin{split}
&\Theta_{\nabla'}(X,Y)=\Psi(X,Y)=b_1 i \Id_E,\\
& \Theta_{\nabla'}(X,Z)=\Psi(X,Z)=b_2 i \Id_E,\\
& \Theta_{\nabla'}(Y,Z)=\frac{\pi i}{\mu} \Id_E +\Psi(Y,Z)= b_3 i \Id_E,
\end{split}\]
where $b_1, b_2, b_3\in \R$. Then using the same argument in \eqref{eq:YM-min}, we have
\[\begin{split}
\YM(\nabla')&=-\tau_E(\{\Theta_{\nabla'}, \Theta_{\nabla'}\})=-\tau_E((\Theta_{\nabla'}(X,Y))^2+(\Theta_{\nabla'}(X,Z))^2+(\Theta_{\nabla'}(Y,Z))^2)\\
&=\frac{2\pi^2}{\mu}-\tau_E(-b_1^2 \Id_E-b_2^2 \Id_E -b_3^2 \Id_E)\\
&=\frac{2\pi^2}{\mu}+2\mu(b_1^2+b_2^2+b_3^2) \; \ge \frac{2\pi^2}{\mu} = \YM(\nabla).
\end{split}\]
Therefore, $\nabla$ is a minimizer of $\YM$ subject to the constant curvature constraint, which completes the proof.

\end{proof}

\begin{rmk}\label{rmk:const-conn-not-min}
\begin{itemize}
\item[(a)] Observe that if $\nabla$ is a compatible connection with constant curvature of the form given in \eqref{eq:curv-min} and $\nabla$ is a minimizer of $\YM$ subject to the constant curvature constraint, in the sense of Theorem~\ref{prop:min-cp}, then $\nabla$ is a critical point of $\YM$.
\item[(b)] We can also use Theorem~\ref{prop:min-cp} to see that the compatible connection $\nabla^1$ with constant curvature given in Theorem~\ref{thm:const_conn2} does not give a minimizer of $\YM$ since $\Theta_{\nabla^1}(X,Y)=\nu i \Id_E\ne 0$.
\end{itemize}
\end{rmk}

By combining Theorem~\ref{thm:critical-ioi} and Theorem~\ref{prop:min-cp} together, we characterize Yang-Mills connections with constant curvature as follows. 

\begin{cor}\label{thm:ym-connection}
Let $\Xi$ be the left $\EE$ and right $\DD$ projective bimodule described in Section~\ref{sec:prelim} .
Then $\nabla$ is a Yang-Mills connection on $\Xi$ with constant curvature $\Theta_\nabla$  if and only if  $\nabla$ is a compatible connection on $\Xi$ over $\DD$ with the following form of constant curvature $\Theta_\nabla$:
\begin{equation}\label{eq:thm:const-curv}
\Theta_\nabla(X,Y)=0,\quad \Theta_\nabla(X,Z)=0, \quad \Theta_\nabla(Y,Z)=\frac{\pi i}{\mu} \Id_E.
\end{equation}
\end{cor}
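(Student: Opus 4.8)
The plan is to obtain the characterization as a direct synthesis of the two preceding theorems, since a Yang--Mills connection is by definition a compatible connection that is simultaneously a critical point and a minimizer of $\YM$, the minimization being understood subject to the constant curvature constraint, consistent with the hypotheses of Theorem~\ref{thm:critical-ioi} and Theorem~\ref{prop:min-cp}. No new computation is required; the content of the corollary is entirely packaged in those two results.

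For the forward implication, I would argue as follows. Suppose $\nabla$ is a Yang--Mills connection on $\Xi$ with constant curvature. In particular $\nabla$ is a minimizer of $\YM$ subject to the constant curvature constraint, so Theorem~\ref{prop:min-cp} applies verbatim and forces the curvature to take exactly the form $\Theta_\nabla(X,Y)=0$, $\Theta_\nabla(X,Z)=0$, $\Theta_\nabla(Y,Z)=\frac{\pi i}{\mu}\Id_E$, which is precisely \eqref{eq:thm:const-curv}.

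For the reverse implication, assume $\nabla$ is a compatible connection with constant curvature of the form \eqref{eq:thm:const-curv}. Since its first component satisfies $\Theta_\nabla(X,Y)=0$, Theorem~\ref{thm:critical-ioi} immediately yields that $\nabla$ is a critical point of $\YM$. Independently, because the full curvature coincides with that of $\nabla^0$ recorded in \eqref{eq:min_curv}, Theorem~\ref{prop:min-cp} shows that $\nabla$ is a minimizer of $\YM$ subject to the constant curvature constraint. Being both a critical point and such a minimizer, $\nabla$ is a Yang--Mills connection with constant curvature, completing the equivalence.

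The argument is essentially bookkeeping; the only point requiring care---and the step I would flag as the substantive one---is the consistency of the notion of ``minimizer'' used in the definition of a Yang--Mills connection with the \emph{constrained} minimization appearing in Theorem~\ref{prop:min-cp}. Since the whole paper stresses that these connections furnish only local, constant-curvature-constrained minima and not global minima, one must read ``minimizer'' in the constrained sense throughout the constant curvature discussion; once this is fixed, Remark~\ref{rmk:const-conn-not-min}(a) already records that in this regime the minimizer condition subsumes the critical-point condition, so no verification beyond invoking the two theorems is needed.
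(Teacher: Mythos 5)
Your proposal is correct and follows essentially the same route as the paper: the forward direction applies Theorem~\ref{prop:min-cp} to the minimizer property, and the reverse direction combines Theorem~\ref{thm:critical-ioi} (criticality from $\Theta_\nabla(X,Y)=0$) with Theorem~\ref{prop:min-cp} (constrained minimality from the curvature matching \eqref{eq:min_curv}), exactly as in the paper's proof. Your added remark about reading ``minimizer'' in the constant-curvature-constrained sense is consistent with how the paper itself uses the term in this context.
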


\begin{proof}
Suppose that $\nabla$ is a Yang-Mills connection on $\Xi$ with constant curvature. This means that $\nabla$ is a critical point and a minimizer of $\YM$. Then Theorem~\ref{prop:min-cp} implies that the curvature of $\nabla$ has the form
\[
\Theta_\nabla(X,Y)=0,\quad \Theta_\nabla(X,Z)=0, \quad \Theta_\nabla(Y,Z)=\frac{\pi i}{\mu} \Id_E.
\]
So we are done.

 On the other hand, suppose that $\nabla$ is a compatible connection on $\Xi$ over $\DD$ with constant curvature as in \eqref{eq:thm:const-curv}. Then Theorem~\ref{prop:min-cp} implies that $\nabla$ is a minimizer of $\YM$ subject to the constant curvature constraint and Theorem~\ref{thm:critical-ioi} implies that $\nabla$ is a critical point of $\YM$, and hence $\nabla$ is a Yang-Mills connection with constant curvature.

\end{proof}

Now we investigate a class of skew-symmetric elements $\HH\in \EE$ that preserve the properties of critical points and the minimizing conditions of $\YM$ for the $\DD$ as follows.

\begin{prop}\label{prop:skew-H}
Let ${\Xi}$ be the left $\EE$ -- right $\DD$ projective bimodule with the right inner product $\langle \cdot, \cdot \rangle_R^D$ described in Section~\ref{sec:prelim} and let $\mathfrak{h}$ be the Heisenberg Lie algebra with basis $\{X,Y,Z\}$ with $[X,Y]=cZ$ given in \eqref{eq:basis-XYZ}. Let $\nabla$ be a compatible connection on $\Xi$ with respect to $\langle \cdot, \cdot \rangle_R^D$. Suppose that $\mathbb{H}$ is a linear map from $\mathfrak{h}$ into  $\EE$, and suppose that for each $X\in \mathfrak{h}$, each element $\mathbb{H}_X$ of $\EE$ has the form $\mathbb{H}_X(x,y,q)=i \,T(x,y)\delta_0(q)$, where $T$ is a real-valued differentiable function on $\R\times \T$ with $T(x-2p\mu, y-2p\nu)=T(x,y)$ for $p\in \Z$. Then $\mathbb{H}_X$ is skew-symmetric in the sense that $\mathbb{H}_X^\ast=-\mathbb{H}_X$ for each $X\in \mathfrak{h}$, and $\nabla+\mathbb{H}$ is a compatible connection on $\Xi$ with respect to $\langle \cdot, \cdot \rangle_R^D$.
\end{prop}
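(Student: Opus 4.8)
The plan is to write the candidate connection as $(\nabla+\mathbb{H})_X(\xi)=\nabla_X(\xi)+\mathbb{H}_X\cdot\xi$, where $\mathbb{H}_X\cdot\xi$ denotes the left action of $\EE$ on $\Xi$, and to verify in turn that (i) each $\mathbb{H}_X$ really is a skew-symmetric element of $\EE$, (ii) $\nabla+\mathbb{H}$ satisfies the Leibniz rule \eqref{eq:nabla-der}, and (iii) $\nabla+\mathbb{H}$ is compatible in the sense of \eqref{eq:nabla-comp}. Since $\nabla$ is assumed to be a compatible connection, the first order of business is to isolate exactly the new terms contributed by $\mathbb{H}$ and to check that they behave correctly; the substantive content is concentrated in step (iii).

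For (i), I would first observe that the hypothesis $T(x-2p\mu,y-2p\nu)=T(x,y)$ is precisely the invariance condition $\gamma_k(\mathbb{H}_X)=\mathbb{H}_X$ coming from \eqref{eq:fixed-cond} (the phase $e(cpk(y-k\nu))$ is trivial at $p=0$), so that $\mathbb{H}_X$ is genuinely an element of $\EE$. Writing $\mathbb{H}_X(x,y,q)=G(x,y)\delta_0(q)$ with $G=iT$, the fact that $T$ is real-valued gives $\overline{G}=-G$, whence $\mathbb{H}_X$ is skew-symmetric by Lemma~6 of \cite{Kang1} (equivalently, since the involution on $\EE$ restricted to elements supported at $q=0$ is just complex conjugation, $\mathbb{H}_X^\ast(x,y,0)=\overline{iT(x,y)}=-iT(x,y)$).

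Step (ii) is formal: for any $a\in\EE$ the left action commutes with the right $\DD$-action on the bimodule $\Xi$, so $\mathbb{H}_X\cdot(\xi\cdot\Phi)=(\mathbb{H}_X\cdot\xi)\cdot\Phi$, and adding the left-multiplication operator $\mathbb{H}_X\cdot$ to a connection preserves \eqref{eq:nabla-der}. Neither skew-symmetry nor the periodicity of $T$ is needed here; this is the standard fact that a connection plus an $\EE$-linear endomorphism is again a connection.

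Step (iii) is the heart of the argument. Since $\nabla$ already satisfies \eqref{eq:nabla-comp}, compatibility of $\nabla+\mathbb{H}$ reduces to showing that the extra terms cancel, i.e. $\langle\mathbb{H}_X\cdot\xi,\eta\rangle_R^D+\langle\xi,\mathbb{H}_X\cdot\eta\rangle_R^D=0$ for all $\xi,\eta\in\Xi$. One clean route is to invoke the standard imprimitivity-bimodule identity $\langle a\cdot\xi,\eta\rangle_R^D=\langle\xi,a^\ast\cdot\eta\rangle_R^D$ for $a\in\EE$ and combine it with $\mathbb{H}_X^\ast=-\mathbb{H}_X$ from step (i). I prefer, however, to verify the cancellation directly, since it pinpoints the role of the periodicity hypothesis: by Proposition~7 of \cite{Kang1} the multiplication-type element acts by $(\mathbb{H}_X\cdot\xi)(x,y)=-iT(x,y)\xi(x,y)$, and substituting this into the explicit formula for $\langle\cdot,\cdot\rangle_R^D$ shows that the $k$-th summand of $\langle\mathbb{H}_X\cdot\xi,\eta\rangle_R^D+\langle\xi,\mathbb{H}_X\cdot\eta\rangle_R^D$ carries the factor $-iT(x+k,y)+iT(x-2p\mu+k,y-2p\nu)$. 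This factor vanishes identically, because the stated periodicity of $T$, applied with $x\mapsto x+k$, gives $T(x+k-2p\mu,y-2p\nu)=T(x+k,y)$. The main obstacle is therefore not conceptual but the careful bookkeeping in this last computation, and the point worth emphasizing is that the periodicity of $T$ along the lattice $\{(2p\mu,2p\nu):p\in\Z\}$ is exactly what is needed both to land $\mathbb{H}_X$ inside $\EE$ and to force the term-by-term cancellation that yields compatibility.
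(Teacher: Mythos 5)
Your proposal is correct, but it follows the computational route that the paper itself only mentions in a footnote, whereas the paper's actual proof is structural. The paper dismisses the skew-symmetry claim as a straightforward computation left to the reader, and for compatibility it simply invokes the Connes--Rieffel framework \cite{CR}: the set $CC(\Xi)$ of compatible connections is an affine space over the skew-symmetric elements of $\EE$, so adding such an element to the compatible connection $\nabla$ again yields a compatible connection. Your argument instead verifies \eqref{eq:nabla-der} and \eqref{eq:nabla-comp} directly from the explicit formulas for the actions and for $\langle\cdot,\cdot\rangle_R^D$, which is exactly what the paper's footnote asserts can be done but does not carry out. What your version buys is transparency about the hypotheses: you make explicit that the periodicity $T(x-2p\mu,y-2p\nu)=T(x,y)$ is precisely the $\gamma$-invariance from \eqref{eq:fixed-cond} needed for $\mathbb{H}_X$ to lie in $\EE$ at all --- a point the paper's proof uses silently, since without it the affine-space argument does not even apply --- and your term-by-term computation of $\langle\mathbb{H}_X\cdot\xi,\eta\rangle_R^D+\langle\xi,\mathbb{H}_X\cdot\eta\rangle_R^D$, using $(\mathbb{H}_X\cdot\xi)(x,y)=-iT(x,y)\xi(x,y)$ from Proposition~7 of \cite{Kang1}, exhibits the same periodicity forcing the cancellation. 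The paper's route buys brevity and shows the statement is an instance of a general fact; yours is self-contained and pinpoints where each hypothesis (reality of $T$ for skew-symmetry, lattice-periodicity for membership in $\EE$ and for compatibility) enters. One small remark: the ``clean route'' you mention in step (iii), via $\langle a\cdot\xi,\eta\rangle_R^D=\langle\xi,a^{\ast}\cdot\eta\rangle_R^D$, is essentially the paper's structural argument in disguise, since that identity is an imprimitivity-bimodule axiom valid for $a\in\EE$ and hence also presupposes $\mathbb{H}_X\in\EE$; your direct computation is the genuinely independent verification.
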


\begin{proof}
A straightforward computation shows that $\mathbb{H}_X^\ast=-\mathbb{H}_X$ for each $X\in \mathfrak{h}$, so we leave it to the readers. Since we assumed that $\nabla$ is a compatible connection and every other compatible connection should have a form $\nabla+\mathbb{F}$, where $\mathbb{F}$ is a skew-symmetric element of $\EE$, $\nabla+\mathbb{H}$ is a compatible connection as discussed in \cite{CR}.\footnote{One can check that $\nabla+\mathbb{H}$ satisfies \eqref{eq:nabla-der} and \eqref{eq:nabla-comp} directly using the explicit formulas of the action and inner product $\langle \cdot, \cdot \rangle_R^D$.} 

\end{proof}

\begin{rmk}
One might wonder if the result of Proposition~\ref{prop:skew-H} holds for more general skew-symmetric elements $\HH \in \EE$. In fact, any element $\HH$ of $\EE$ is of the form $\HH(x,y,p)=\sum_{i\in \Z} H_i(x,y)\delta_i(p)$. So one might think getting concrete functions $H_i$ that satisfies  $\HH^*=-\HH$ in addition to the fixed point condition $\gamma_k(\HH)=\HH$ for all $k\in \Z$ given in \eqref{eq:fixed-cond} might not look so difficult.  However, even for the second simplest case, $\mathbb{H}$ is supported on $-1$ and $1$, it seems to be highly nontrivial to obtain concrete conditions on the functions $H_i$ and to prove existence of those functions.
\end{rmk}

Using the above proposition, we construct a new class of Yang-Mills connections with constant curvature on $\Xi$ over $\DD$ using $\nabla^0$ given in \eqref{eq:min_conn}.

\begin{thm}\label{thm:YM-connection}
Let $\Xi$ be the left $\EE$ and right $\DD$ projective bimodule described in Section~\ref{sec:prelim} and let $\mathfrak{h}$ be the Heisenberg Lie algebra with basis $\{X,Y,Z\}$ with $[X,Y]=cZ$ given in \eqref{eq:basis-XYZ}. Suppose that $\mu\ne 0$ and $\nu\ne 0$. Let $\nabla^0$ be the compatible connection given in \eqref{eq:min_conn} with constant curvature given in \eqref{eq:min_curv}. Let $\mathbb{H}$ be the linear map from $\mathfrak{h}$ to the set of skew-symmetric elements of $\EE$ given by
\[\begin{split}
&\mathbb{H}_X (x,y,p)= i\, g_1(y) \delta_0(p)\\
&\mathbb{H}_Y(x,y,p)= i\, g_2(x) \delta_0(p)\\
&\mathbb{H}_Z(x,y,p)=0,
\end{split}\] 
where $(x,y)\in \RR\times \TT$, and $g_1$, $g_2$ are real-valued differentiable functions satisfying $g_1(y)=g_1(y-2p\nu)$, $g_2(x)=g_2(x-2p\mu)$.
\begin{itemize}
\item[(a)] The connection $\nabla=\nabla^0+ \mathbb{H}$ is a compatible with respect to $\langle \cdot, \cdot \rangle_R^D$, and has constant curvature given by
\[
\Theta_{\nabla}(X,Y)=0,\quad \Theta_{\nabla}(X,Z)=0,\quad \Theta_{\nabla}(Y,Z)=\frac{\pi i}{\mu}\Id_E,
\]
where $\Id_E(x,y,p)=\delta_0(p)$.
\item[(b)] The connection $\nabla=\nabla^0+\mathbb{H}$ is a Yang-Mills connection with constant curvature on $\Xi$ over $\DD$.
\end{itemize}
\end{thm}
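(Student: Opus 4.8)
The plan is to treat the two parts separately: part (a) carries the (light) computational load, and part (b) is then an immediate invocation of the characterization already in hand. For part (a), compatibility should come essentially for free from Proposition~\ref{prop:skew-H}. Each component $\mathbb{H}_X,\mathbb{H}_Y,\mathbb{H}_Z$ is of the form $i\,T(x,y)\delta_0(p)$ with $T$ real-valued, where $T$ equals $g_1(y)$, $g_2(x)$, or $0$ respectively. The hypotheses $g_1(y)=g_1(y-2p\nu)$ and $g_2(x)=g_2(x-2p\mu)$ are exactly what is needed to verify the invariance condition $T(x-2p\mu,y-2p\nu)=T(x,y)$ of Proposition~\ref{prop:skew-H}, so that proposition at once gives skew-symmetry of each $\mathbb{H}_A$ and compatibility of $\nabla=\nabla^0+\mathbb{H}$.

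For the curvature I would use the perturbation formula obtained by expanding $\nabla_A=\nabla^0_A+\mathbb{H}_A$ in $\Theta_\nabla(A,B)=\nabla_A\nabla_B-\nabla_B\nabla_A-\nabla_{[A,B]}$, namely
\[
\Theta_\nabla(A,B)=\Theta_{\nabla^0}(A,B)+[\nabla^0_A,\mathbb{H}_B]-[\nabla^0_B,\mathbb{H}_A]+[\mathbb{H}_A,\mathbb{H}_B]-\mathbb{H}_{[A,B]}.
\]
The key step is to kill every correction term using Lemma~\ref{lem:nabla_G}. Each $\mathbb{H}_A$ is a skew-symmetric multiplication-type element with symbol $G_X=ig_1(y)$, $G_Y=ig_2(x)$, $G_Z=0$, so the lemma applies directly. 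The crucial observation is the functional dependence: since $g_1$ depends on $y$ alone and $g_2$ on $x$ alone, the term $[\nabla^0_X,\mathbb{H}_Y]$ equals $\partial_y(ig_2(x))=0$ and $[\nabla^0_Y,\mathbb{H}_X]$ equals $\partial_x(ig_1(y))=0$; equation \eqref{eq:conn_ZG} annihilates every commutator involving $\nabla^0_Z$; the multiplication-type elements act by pointwise scalar multiplication and hence commute, giving $[\mathbb{H}_A,\mathbb{H}_B]=0$; and because $\mathbb{H}_Z=0$ while $[X,Y]=cZ$ and $Z$ is central, every $\mathbb{H}_{[A,B]}$ term vanishes too. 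Thus all correction terms vanish and $\Theta_\nabla=\Theta_{\nabla^0}$, which is precisely the asserted constant curvature.

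Part (b) then follows with no further work: part (a) exhibits $\nabla$ as a compatible connection whose constant curvature has exactly the form \eqref{eq:thm:const-curv}, so Corollary~\ref{thm:ym-connection} identifies $\nabla$ as a Yang-Mills connection with constant curvature.

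I do not expect a single hard obstacle here; once the machinery of Lemma~\ref{lem:nabla_G}, Proposition~\ref{prop:skew-H}, and Corollary~\ref{thm:ym-connection} is available the argument is routine. The one point that genuinely demands care is the vanishing of $[\nabla^0_X,\mathbb{H}_Y]$ and $[\nabla^0_Y,\mathbb{H}_X]$, which hinges on $g_1$ being a function of $y$ only and $g_2$ of $x$ only. Were either function allowed to depend on the other variable, the $(X,Y)$-component of the curvature would acquire a nonzero multiplication term, destroying the minimizing form of the curvature and hence the Yang-Mills property; so it is worth emphasizing that this separation of variables is exactly the design constraint making the construction work.
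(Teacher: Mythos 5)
Your proposal is correct and follows essentially the same route as the paper's own proof: compatibility via Proposition~\ref{prop:skew-H} (with the periodicity hypotheses on $g_1,g_2$ matching its invariance condition), the perturbation formula for the curvature with all correction terms killed by Lemma~\ref{lem:nabla_G}, the commuting of multiplication-type elements, and the vanishing of $\mathbb{H}_{[A,B]}$ since $\mathbb{H}_Z=0$, followed by Corollary~\ref{thm:ym-connection} for part (b). Your closing remark on why the separation of variables in $g_1,g_2$ is the essential design constraint is a nice observation that the paper leaves implicit.
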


\begin{proof}
For (a), first notice that $\nabla=\nabla^0+\HH$ is a compatible connection, since $\mathbb{H}$ satisfies conditions in Proposition~\ref{prop:skew-H}. To compute the corresponding curvature $\Theta_\nabla$, notice that
\[\begin{split}
&\Theta_{\nabla}(X,Y)=\Theta_{\nabla^0}(X,Y)+[\nabla^0_X, \HH_Y]-[\nabla^0_Y, \HH_X]+[\HH_X, \HH_Y]-\HH_{[X,Y]}\\
&\Theta_{\nabla}(X,Z)=\Theta_{\nabla^0}(X,Z)+[\nabla^0_X, \HH_Z]-[\nabla^0_Z, \HH_X]+[\HH_X,\HH_Z]\\
&\Theta_\nabla(Y,Z)=\Theta_{\nabla^0}(Y,Z)+[\nabla^0_Y, \HH_Z]-[\nabla^0_Z, \HH_Y]+[\HH_Y, \HH_Z]
\end{split}\]
Then since $\HH_X$ and $\HH_Y$ are skew-symmetric multiplication-type elements of $\EE$, $\HH_X$ is given by a  function of $y$, and $\HH_Y$ is given by a function of $x$, Lemma~\ref{lem:nabla_G} implies that 
\[
[\nabla^0_X, \HH_Y]=0, \quad [\nabla^0_Y, \HH_X]=0, \quad [\nabla^0_Z, \HH_X]=0,\quad [\nabla^0_Z, \HH_Y]=0.
\]
Also we have $[\HH_X, \HH_Y]=0$, $[\HH_X,\HH_Z]=0$ and $[\HH_Y, \HH_Z]=0$  since $\HH_X$ and $\HH_Y$ are multiplication-type elements of $\EE$, and $\HH_Z=0$.
Also since $\Theta_{\nabla^0}(X,Y)=0$, $\Theta_{\nabla^0}(X,Z)=0$ and $\Theta_{\nabla^0}(Y,Z)=\frac{\pi i}{\mu}\Id_E$, where $\Id_E$ is the identity element of $\EE$, we have
\[
\begin{split}
&\Theta_{\nabla}(X,Y)=\Theta_{\nabla^0}(X,Y)=0\\
&\Theta_{\nabla}(X,Z)=\Theta_{\nabla^0}(X,Z)=0\\
&\Theta_\nabla(Y,Z)=\Theta_{\nabla^0}(Y,Z)=\frac{\pi i}{\mu}\Id_E,\\
\end{split}\]
which proves (a).

Then  (a) implies that the connection $\nabla=\nabla^0+ \HH$ is a Yang-Mills connection on $\Xi$ over $\DD$ by Theorem~\ref{thm:ym-connection}, which proves (b). 

\end{proof}

We next give concrete examples of Yang-Mills connections on $\DD$ constructed using the preceding result.

\begin{example}\label{ex:YM-conn}
According to Theorem~\ref{thm:YM-connection}, there are many Yang-Mills connections with constant curvature on $\Xi$ over $\DD$. For example, for $\mu\ne 0$ and $\nu\ne 0$, let $\HH_X(x,y,q)=i\, g_1(y)\delta_0(q)$ and $\HH_Y(x,y,q)=i\, g_2(x) \delta_0(q)$ with

\[\begin{split}
&g_1(y)=\cos^{n_1}(2\pi y) \sin^{m_1} (2 \pi y)\\
&g_2(x)=\cos^{n_2}(\frac{\alpha_1 \pi x}{\mu}) \sin^{m_2} (\frac{\alpha_2 \pi x}{\mu}),
\end{split}\]

where $n_1, m_1, n_2, m_2$ are non-negative integers (but not all pairs $(n_1, m_1)$ and $(n_2, m_2)$ are trivial) and $\alpha_1, \alpha_2$ are nonzero real numbers. Then $g_1$ satisfies $g_1(y)=g_1(y-2p\nu)$ and $g_1(y)=g_1(y-1)$, and $g_2$ satisfies $g_2(x)=g_2(x-2p\mu)$.  Thus $\nabla^0+\HH$ is a Yang-Mills connection on $\Xi$ over $\DD$ for each $n_1, n_2, m_1, m_2, \alpha_1, \alpha_2$.

In particular, a connection $\nabla$ given by
\[\begin{split}
(\nabla_Xf)(x,y)&=-\frac{\partial f}{\partial y}(x,y)+\frac{\pi c i}{2\mu}x^2 f(x,y) +  i \cos (2 \pi y) f(x,y)\\
(\nabla_Yf)(x,y)&=-\frac{\partial f}{\partial x}(x,y) + i \cos (\frac{\pi x}{\mu}) f(x,y)\\
(\nabla_Zf)(x,y)&=\frac{\pi i x}{\mu}f(x,y)
\end{split}\]
is a Yang-Mills connection with constant curvature on $\Xi$ over $\DD$.
\end{example}

\section{Connections with non-constant curvature}\label{sec:non-const}

As discussed in Remark~\ref{rmk:const-conn-not-min}, if $\nabla$ is a minimizer of $\YM$ subject to the constant curvature constraint, then $\nabla$ is a critical point of $\YM$. However, this is no longer true for a compatible connection with non-constant curvature as shown in the following Theorem.

\begin{thm}\label{thm:min-but-nc}
Let $\Xi$ be the left $\EE$ and right $\DD$ projective bimodule described in Section~\ref{sec:prelim}, and let $\mathfrak{h}$ be the Heisenberg Lie algebra with basis $\{X,Y,Z\}$ with $[X,Y]=cZ$ given in \eqref{eq:basis-XYZ}, where $c$ is a positive integer. Suppose that $\mu\ne 0$. Let $\nabla^0$ be the compatible connection given in \eqref{eq:min_conn} with constant curvature given in \eqref{eq:min_curv}. Let $\mathbb{H}$ be the linear map from $\mathfrak{h}$ to the set of skew-symmetric elements of $\EE$ given by
\[\begin{split}
&\mathbb{H}_X (x,y,p)= 0\\
&\mathbb{H}_Y(x,y,p)= 0\\
&\mathbb{H}_Z(x,y,p)=i \cos(\frac{\alpha \pi x}{\mu})  \, \delta_0(p),
\end{split}\] 
where $\alpha \in \R\setminus \negthickspace \{0\}$. Then 
\begin{itemize}
\item[(a)] The connection $\nabla=\nabla^0+\mathbb{H}$ is compatible with respect to $\langle \cdot, \cdot \rangle_R^D$, and it has non-constant curvature such that $\Theta_\nabla(X,Y)\ne 0$.
\item[(b)] There exists a triple $(c, \mu,\alpha)$ with  $c\in \Z^+, \mu\in (0,1/2], \alpha\in \R\setminus \negthickspace \{0\}$ such that $\nabla$ is not a critical point but its value of $\YM$ satisfies $\YM(\nabla)  < \YM(\nabla^0)=\frac{2\pi^2}{\mu}$.
\end{itemize}
\end{thm}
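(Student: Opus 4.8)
The plan is to dispatch (a) directly from the curvature calculus already assembled, and to reduce (b) to a single elementary integral whose analysis is the real work.

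For (a), I would first invoke Proposition~\ref{prop:skew-H}: the only nonzero component, $\mathbb{H}_Z(x,y,p)=i\cos(\tfrac{\alpha\pi x}{\mu})\delta_0(p)$, has exactly the form $iT(x,y)\delta_0(p)$ with $T$ real-valued and differentiable, so (for $\alpha,\mu$ making $T$ invariant under $(x,y)\mapsto(x-2p\mu,y-2p\nu)$) the map $\mathbb{H}$ lands in the skew-symmetric elements of $\EE$ and $\nabla=\nabla^0+\mathbb{H}$ is compatible with $\langle\cdot,\cdot\rangle_R^D$. I would then expand the curvature as in the proof of Theorem~\ref{thm:YM-connection}. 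Since $[X,Y]=cZ$, $[X,Z]=[Y,Z]=0$, and $\mathbb{H}_X=\mathbb{H}_Y=0$, every term involving $\mathbb{H}_X,\mathbb{H}_Y$ drops out and the expansion collapses to
\[
\Theta_\nabla(X,Y)=-c\,\mathbb{H}_Z,\qquad \Theta_\nabla(X,Z)=[\nabla^0_X,\mathbb{H}_Z],\qquad \Theta_\nabla(Y,Z)=\tfrac{\pi i}{\mu}\Id_E+[\nabla^0_Y,\mathbb{H}_Z].
\]
Applying Lemma~\ref{lem:nabla_G} to the skew multiplication element $\mathbb{H}_Z$, whose function depends on $x$ only, gives $[\nabla^0_X,\mathbb{H}_Z]=0$ (the $y$-derivative vanishes) and $[\nabla^0_Y,\mathbb{H}_Z]$ equal to the skew multiplication element determined by $\partial_x(i\cos(\tfrac{\alpha\pi x}{\mu}))$, a nonzero multiple of $\sin(\tfrac{\alpha\pi x}{\mu})$. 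In particular $\Theta_\nabla(X,Y)=-c\,\mathbb{H}_Z$ is nonzero and $x$-dependent, so $\nabla$ has non-constant curvature with $\Theta_\nabla(X,Y)\ne 0$, which is (a).

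For (b) the non-criticality is free: a critical point must have $\Theta_\nabla(X,Y)=0$ by Proposition~\ref{prop:critical}, whereas here $\Theta_\nabla(X,Y)=-c\,i\cos(\tfrac{\alpha\pi x}{\mu})\delta_0(p)\ne 0$. For the value, all three curvature components are multiplication-type, so their $\EE$-squares are pointwise and $\tau_E$ collapses to $\int_0^{2\mu}\int_0^1(\cdot)\,dy\,dx$; this reduces the functional to the closed form
\[
\YM(\nabla)=\int_0^{2\mu}\Big(c^2\cos^2(\tfrac{\alpha\pi x}{\mu})+\tfrac{\pi^2}{\mu^2}\big(1+\alpha\sin(\tfrac{\alpha\pi x}{\mu})\big)^2\Big)\,dx,
\]
to be compared with $\YM(\nabla^0)=\tfrac{2\pi^2}{\mu}$. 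The remaining task is the one-variable problem of producing a triple $(c,\mu,\alpha)$ with $c\in\Z^+$, $\mu\in(0,1/2]$, $\alpha\in\R\setminus\{0\}$ for which this integral falls strictly below $\tfrac{2\pi^2}{\mu}$.

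I expect this last step to be the whole difficulty. Expanding the square and subtracting $\tfrac{2\pi^2}{\mu}$, the excess is a sum of a positive cost $c^2\int_0^{2\mu}\cos^2$, a positive quadratic term $\tfrac{\pi^2\alpha^2}{\mu^2}\int_0^{2\mu}\sin^2$, and a single cross term $\tfrac{2\pi^2\alpha}{\mu^2}\int_0^{2\mu}\sin(\tfrac{\alpha\pi x}{\mu})\,dx$; since the first two are manifestly non-negative, any strict decrease must be forced entirely through the cross term. That integral is purely an endpoint (boundary) evaluation proportional to $1-\cos 2\alpha\pi$, precisely because $[0,2\mu]$ is a fundamental domain rather than a full period, so its sign and size are governed by the choice of $\alpha$ off the integers, while the restriction $\mu\le 1/2$ is what one would use to keep the positive $c^2$- and $\alpha^2$-costs small enough for the cross term to dominate. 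The delicate point — and where I would concentrate the effort — is verifying that the endpoint contribution actually cooperates in sign and that an admissible triple genuinely survives the balance, since the crude estimates above leave essentially no slack; should the boundary term turn out non-negative for this phase, one is led to adjust $\mathbb{H}_Z$ (for instance replacing $\cos$ by $\sin$, so the endpoint evaluation becomes $-\sin 2\alpha\pi<0$ for small $\alpha>0$) and to re-optimize.
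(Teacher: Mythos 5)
Your part (a) and the non-criticality claim in (b) follow the paper's route exactly: compatibility from Proposition~\ref{prop:skew-H}, the curvature collapse via Lemma~\ref{lem:nabla_G} to $\Theta_\nabla(X,Y)=-c\,\mathbb{H}_Z\ne 0$, $\Theta_\nabla(X,Z)=0$, $\Theta_\nabla(Y,Z)=\frac{\pi i}{\mu}\Id_E+[\nabla^0_Y,\mathbb{H}_Z]$, and non-criticality from Proposition~\ref{prop:critical} since $\Theta_\nabla(X,Y)\ne 0$. No issue there.

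The gap is in the remainder of (b), which is the entire assertion of the theorem: you never exhibit a triple $(c,\mu,\alpha)$, and by your own formula none can exist. Your cross term is
\[
\frac{2\pi^2\alpha}{\mu^2}\int_0^{2\mu}\sin\Big(\frac{\alpha\pi x}{\mu}\Big)\,dx
=\frac{2\pi}{\mu}\big(1-\cos 2\alpha\pi\big)\ \ge\ 0
\]
for every $\alpha\ne 0$ (the $\alpha$ cancels, so negative $\alpha$ does not help), hence with the sign $(1+\alpha\sin)^2$ one gets $\YM(\nabla)\ge\YM(\nabla^0)$ for all admissible triples; and your fallback of replacing $\cos$ by $\sin$ in $\mathbb{H}_Z$ would prove a different statement, since the theorem fixes $\mathbb{H}_Z$. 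The paper's proof lives or dies on the opposite sign: it identifies $[\nabla^0_Y,\mathbb{H}_Z]$ with $\frac{\partial\mathbb{H}_Z}{\partial x}$ \emph{as elements of} $\EE$, so that $\Theta_\nabla(Y,Z)$ has function $\frac{\pi i}{\mu}\big(1-\alpha\sin(\frac{\alpha\pi x}{\mu})\big)$, the cross term becomes $\frac{2\pi}{\mu}(\cos 2\alpha\pi-1)\le 0$, and the explicit choice $c=1$, $\mu=\tfrac12$, $\alpha=\tfrac18$ (so $\sin 4\alpha\pi=1$) gives
\[
\YM(\nabla)=\tfrac12+\tfrac1\pi+4\pi^2+4\pi\big(\tfrac{\sqrt2}{2}-1\big)+\tfrac{\pi^2}{32}-\tfrac{\pi}{16}\;<\;4\pi^2=\YM(\nabla^0).
\]
So the single point your proposal leaves unresolved, the sign of $[\nabla^0_Y,\mathbb{H}_Z]$ as an element of $\EE$, is exactly the crux. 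Your hesitation is not frivolous: Lemma~\ref{lem:nabla_G} is an operator statement (multiplication by $\partial_x G$ with $G=i\cos(\frac{\alpha\pi x}{\mu})$), and converting an operator into an element of $\EE$ under the paper's own convention (a skew multiplication-type element with function $G$ acts as multiplication by $-G$) produces $-\frac{\partial\mathbb{H}_Z}{\partial x}$, i.e.\ your sign rather than the paper's; moreover, for a genuine $T\in\EE$ one has $\tau_E([\nabla^0_Y,T])=0$ by Lemma~2.2 of \cite{CR}, so a nonvanishing cross term can only come from the failure of $\cos(\frac{\alpha\pi x}{\mu})$ to satisfy the invariance $T(x-2p\mu,y-2p\nu)=T(x,y)$ when $\alpha\notin\Z$ --- the very caveat you flagged parenthetically in (a). But flagging the tension is not a proof: as submitted, your argument neither completes the paper's proof of (b) nor refutes the statement. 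To prove the theorem as written you must adopt the paper's identification $[\nabla^0_Y,\mathbb{H}_Z]=\frac{\partial\mathbb{H}_Z}{\partial x}$, carry out the four integrals, and verify the numerical inequality for $(c,\mu,\alpha)=(1,\tfrac12,\tfrac18)$ displayed above.
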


\begin{proof}
Since $\nabla^0$ is a compatible connection with respect to $\langle \cdot, \cdot \rangle_R^D$ and $\mathbb{H}$ is skew-symmetric, $\nabla=\nabla^0+\mathbb{H}$ is a compatible connection with respect to $\langle \cdot, \cdot \rangle_R^D$. Using Lemma~\ref{lem:nabla_G}, we compute the curvature of $\nabla$ as follows.
\[\begin{split}
&\Theta_\nabla(X,Y)=-c\,\mathbb{H}_Z \ne 0, \\
&\Theta_\nabla(X,Z)=[\nabla^0_X, \mathbb{H}_Z]=0, \\
&\Theta_\nabla(Y,Z)=\frac{\pi i}{\mu}\Id_E+[\nabla^0_Y, \mathbb{H}_Z]=\frac{\pi i }{\mu} \Id_E+\frac{\partial \mathbb{H}_Z}{\partial x} \ne 0,
\end{split}\]
 which shows that $\Theta_\nabla$ cannot be constant because $\mathbb{H}_Z(x,y,p)=i \cos(\frac{\alpha \pi x}{\mu})  \, \delta_0(p)$ is not constant for $\alpha \in \R \setminus \negthickspace \{0\}$. This proves (a). 

For (b), first recall that 
\[
\YM(\nabla^0)=-\tau_E(\{\Theta_{\nabla^0}, \Theta_{\nabla^0}\})=-\tau_E((\frac{\pi i}{\mu} \Id_E)^2)=-\int_0^{2\mu}\int_0^1 (-\frac{\pi^2}{\mu^2})dy\, dx=\frac{2\pi^2}{\mu}.
\]

Since $\Theta_\nabla(X,Y)\ne 0$, Proposition~\ref{prop:critical} implies that $\nabla$ is not a critical point of $\YM$.
For the second assertion of (b), we compute
\[\begin{split}
\YM(\nabla)&=-\tau_E(\{\Theta_\nabla, \Theta_\nabla\})=-\tau_E( (c\, \mathbb{H}_Z)^2+(\frac{\pi i}{\mu}\Id_E+\frac{\partial \mathbb{H}_Z}{\partial x})^2)\\
&=-\tau_E( (c\, \mathbb{H}_Z)^2)-\tau_E(-\frac{\pi^2}{\mu^2}\Id_E+\frac{2\pi i}{\mu}\frac{\partial \mathbb{H}_Z}{\partial x}+ (\frac{\partial \mathbb{H}_Z}{\partial x})^2)\\
&=-\int_0^{2\mu}\int_0^1 c^2 (\mathbb{H}_Z\ast \mathbb{H}_Z)(x,y,0) \, dy \,dx-\int_0^{2\mu}\int_0^1 (-\frac{\pi^2}{\mu^2})\Id_E(x,y,0)\, dy\, dx \\
&\quad -\int_0^{2\mu}\int_0^1 \frac{2\pi i}{\mu} \, \frac{\partial \mathbb{H}_Z}{\partial x}(x,y,0)\, dy\, dx
-\int_0^{2\mu}\int_0^1(\frac{\partial \mathbb{H}_Z}{\partial x}\ast \frac{\partial \mathbb{H}_Z}{\partial x})(x,y,0)\, dy\, dx,
\end{split}\]
where $\ast$ is the convolution product of $\EE$.
We compute the above four terms separately as follows:
For the first term, notice that 
\[
(\mathbb{H}_Z\ast \mathbb{H}_Z)(x,y,0)=\sum_{q\in \Z}\mathbb{H}_Z(x,y,q) \mathbb{H}_Z(x+q, y, -q)=\mathbb{H}_Z(x,y,0)\mathbb{H}_Z(x,y,0)=-\cos^2(\frac{\alpha \pi x}{\mu}).
\]
So the first integral becomes 
\[
c^2\,\int_0^{2\mu}\int_0^1\cos^2(\frac{\alpha \pi x}{\mu})\, dy\, dx = c^2\mu + \frac{c^2\mu}{4\alpha \pi}\sin (4\alpha \pi)
\]
The second integral becomes
\[
\int_0^{2\mu}\int_0^1 \frac{\pi^2}{\mu^2} \, dy\, dx=\frac{2\pi^2}{\mu}.
\]
For the third integral, note that $\frac{\partial \mathbb{H}_Z}{\partial x}(x,y,0)=-\frac{\alpha \pi}{\mu} i\sin(\frac{\alpha \pi x}{\mu}) $. So the third integral becomes
\[
-\int_0^{2\mu}\int_0^1 \frac{2\pi i}{\mu} (-\frac{\alpha \pi}{\mu} i \sin(\frac{\alpha \pi x}{\mu}) )\, dy\, dx
=-\frac{2\pi^2\alpha}{\mu^2}\int_0^{2\mu} \sin (\frac{\alpha \pi x}{\mu})\, dx = \frac{2\pi}{\mu} (\cos (2\alpha \pi)-1)
\]
For the fourth integral, note that 
\[\begin{split}
(\frac{\partial \mathbb{H}_Z}{\partial x}\ast \frac{\partial \mathbb{H}_Z}{\partial x})(x,y,0)&=\sum_{q\in \Z} \frac{\partial \mathbb{H}_Z}{\partial x}(x,y,q) \frac{\partial \mathbb{H}_Z}{\partial x}(x+q, y,-q)=\frac{\partial \mathbb{H}_Z}{\partial x}(x,y,0) \frac{\partial \mathbb{H}_Z}{\partial x}(x,y,0) \\
&= \big(-\frac{\alpha \pi}{\mu} i\, \sin(\frac{\alpha \pi x}{\mu})\big)^2=-\frac{\alpha^2\pi^2}{\mu^2} \sin^2 (\frac{\alpha \pi x}{\mu}).
\end{split}\]
Thus the fourth integral becomes

\[\begin{split}
&-\int_0^{2\mu}\int_0^1 \big(-\frac{\alpha^2\pi^2}{\mu^2} \sin^2 (\frac{\alpha \pi x}{\mu})\big)\, dy\, dx = \frac{\alpha^2\pi^2}{\mu^2}\int_0^{2\mu} \big( \frac{1}{2}-\frac{1}{2} \cos(\frac{2\alpha \pi x}{\mu}) \big)\, dx\\
&\quad =\frac{\alpha^2\pi^2}{2\mu^2}\big(2\mu - \frac{\mu}{2\alpha \pi}\sin (4\alpha \pi)\big)=\frac{\alpha^2\pi^2}{\mu}-\frac{\alpha \pi}{4\mu}\sin(4 \alpha \pi).
\end{split}\]

Hence we have
\begin{equation}\label{eq:YM-value}
\YM(\nabla)=c^2\mu +\frac{c^2\mu}{4\alpha \pi}\sin(4\alpha \pi)+\frac{2\pi^2}{\mu}+\frac{2\pi}{\mu} (\cos (2\alpha \pi)-1)+\frac{\alpha^2\pi^2}{\mu}-\frac{\alpha \pi}{4\mu}\sin(4 \alpha \pi).
\end{equation}
 For simplicity, first choose $c=1$ and $\mu=\frac{1}{2}$. Then choose $\alpha=\frac{1}{8}$ so that so that $\sin(4\alpha\pi)=1$. Then we have
\[\begin{split}
\YM(\nabla)&=\frac{1}{2}+\frac{1}{\pi}+4\pi^2+ 4\pi (\frac{\sqrt{2}}{2}-1)+\frac{\pi^2}{32}-\frac{\pi}{16}<4\pi^2=\frac{2\pi^2}{\mu}=\YM(\nabla^0),
\end{split}\]
which gives the desired result.
\end{proof}

\begin{rmk}
Observe that for given connection $\nabla$ in (a) of Theorem~\ref{thm:min-but-nc}, the value of the Yang-Mills functional $\YM(\nabla)$ depends on $c\in \Z^+$ and $\mu\in (0,\frac{1}{2}]$. This means only for certain $\DD$, this particular form of connection $\nabla$ gives a smaller value of $\YM(\nabla)$ than that of $\YM(\nabla^0)$.
\end{rmk}

\section{A projective module with trace $2\nu$}\label{sec:trace_2nu}

It is well-known that the left $\EE$--right $\DD$ projective module $\Xi$ has trace $2\mu$ if $\mu>0$. In particular, one can compute that the projection $Q=\langle R, R\rangle_R^D$ gives 
\[
\tau(Q)=\tau(\langle R, R\rangle_R^D)=\tau_E(\langle R, R \rangle_L^E)=\tau_E(\Id_E)=2\mu
\]
if $\mu>0$. One might wonder if we could construct a projective module over $\DD$ with trace other than $2 \mu$ (meaning possibly in a different $K_0$-class) and if we could find a meaningful compatible connection on it. In this section, we investigate compatible connections on a projective module with trace $2\nu$.

\subsection{The Grassmannian connection}

Theorem~1 of \cite{Ab5} implies that if $0\le \nu \le 1/2$ and $\mu>1$, then there exists a finitely generated right $\DD$ projective submodule $\Xi'$ of $\Xi$ with trace $2\nu$. The theorem does not give an explicit construction of the module but it gives an idea of how one can construct the corresponding projection $P$, and we find the Grassmannian connection as follows.

\begin{prop}\label{prop:Grass_conn}
Let $\Xi$ be the left-$\EE$ and right $\DD$ projective bimodule given in Section~\ref{sec:prelim} and let $\mathfrak{h}$ be the Heisenberg Lie algebra with basis $\{X,Y,Z\}$ with $[X,Y]=cZ$ given in \eqref{eq:basis-XYZ}.  Let $R\in \Xi$ be such that $\langle R, R\rangle_R^D=Q$ and $\langle R, R\rangle_L^E=Id_E$. Let $P$ be the projection in $\EE$ with trace $2\nu$ constructed by Abadie in \cite{Ab5}, and thus $P\Xi$ is a submodule of $\Xi$.
Then
\begin{itemize}
\item[(a)] The Grassmannian connection on $P\Xi$ is given by
\[
\widehat{\nabla}_X(\xi)=(P\cdot R)\delta_X(\langle R, P\cdot \xi\rangle_R^D)
\]
for $X\in \mathfrak{h}$.
\item[(b)] The Grassmannian connection $\widehat{\nabla}_X$ is compatible for $\delta$ with respect to $\langle \cdot,\cdot\rangle_R^D$. i.e. for $\xi,\eta\in P\Xi$, $\widehat{\nabla}_X$ satisfies
\[
\langle \widehat{\nabla}_X(\xi), \eta\rangle_R^D+\langle \xi,\widehat{\nabla}_X(\eta)\rangle_R^D=\delta_X(\langle \xi,\eta\rangle_R^D
\]
for $X \in \mathfrak{h}$.
\end{itemize}
\end{prop}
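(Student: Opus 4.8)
The plan is to treat $P\cdot R$ as the generating vector (frame) for the submodule $P\Xi$, playing the exact role that $R$ plays for $\Xi$ in \eqref{eq:Gr-conn}, and then to run the same verifications that establish $\nabla^G$. First I would pin down the frame identities. Using $\langle a\cdot\xi,\eta\rangle_L^E=a\langle\xi,\eta\rangle_L^E$ and $\langle\xi,a\cdot\eta\rangle_L^E=\langle\xi,\eta\rangle_L^E a^*$ together with $P=P^*=P^2$ and $\langle R,R\rangle_L^E=\Id_E$, one computes $\langle P\cdot R,P\cdot R\rangle_L^E=P\langle R,R\rangle_L^E P=P$, so that $P\cdot R$ has ``length'' the unit $P$ of the corner algebra $P\EE P$, the left coefficient algebra of $P\Xi$. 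Feeding this into the imprimitivity identity $\langle\xi,\eta\rangle_L^E\cdot\zeta=\xi\cdot\langle\eta,\zeta\rangle_R^D$ with $\xi=\eta=P\cdot R$ and any $\zeta\in P\Xi$, and using $P\cdot\zeta=\zeta$, yields the reconstruction formula $\zeta=(P\cdot R)\cdot\langle P\cdot R,\zeta\rangle_R^D$ for all $\zeta\in P\Xi$. Finally, adjointability of the left action with respect to the right inner product, $\langle a\cdot\xi,\eta\rangle_R^D=\langle\xi,a^*\cdot\eta\rangle_R^D$, rewrites the coefficient as $\langle P\cdot R,\zeta\rangle_R^D=\langle R,P\cdot\zeta\rangle_R^D$, which is the form appearing in the statement of (a).

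For (a) I would then verify that the frame recipe $\widehat{\nabla}_X(\zeta)=(P\cdot R)\,\delta_X(\langle P\cdot R,\zeta\rangle_R^D)$ defines a connection valued in $P\Xi$. It lands in $P\Xi$ because $(P\cdot R)\cdot d=P\cdot(R\cdot d)$. For the Leibniz property, right $\DD$-linearity of $\langle\cdot,\cdot\rangle_R^D$ in its second slot gives $\langle P\cdot R,\zeta\cdot\Phi\rangle_R^D=\langle P\cdot R,\zeta\rangle_R^D\,\Phi$; applying the derivation rule $\delta_X(d\Phi)=\delta_X(d)\Phi+d\,\delta_X(\Phi)$ and collapsing the trailing term with the reconstruction formula $(P\cdot R)\langle P\cdot R,\zeta\rangle_R^D=\zeta$ produces $\widehat{\nabla}_X(\zeta\cdot\Phi)=\widehat{\nabla}_X(\zeta)\cdot\Phi+\zeta\cdot\delta_X(\Phi)$. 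Substituting $\langle P\cdot R,\zeta\rangle_R^D=\langle R,P\cdot\zeta\rangle_R^D$ then gives the displayed formula. (Equivalently, one could present $\widehat{\nabla}$ as the pullback of the trivial Grassmannian connection $d\mapsto\tilde Q\,\delta_X(d)$ on $\tilde Q\DD$, $\tilde Q=\langle P\cdot R,P\cdot R\rangle_R^D$, under the isomorphism $P\Xi\cong\tilde Q\DD$, using $(P\cdot R)\tilde Q=P\cdot R$; either route yields the same formula.)

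For (b) I would set $a=\langle P\cdot R,\xi\rangle_R^D$ and $b=\langle P\cdot R,\eta\rangle_R^D$ for $\xi,\eta\in P\Xi$, so that reconstruction gives $\xi=(P\cdot R)\,a$, $\eta=(P\cdot R)\,b$, and hence $\langle\xi,\eta\rangle_R^D=a^*b$. Using $\langle f\cdot\Phi,g\rangle_R^D=\Phi^*\langle f,g\rangle_R^D$, $\langle f,g\cdot\Phi\rangle_R^D=\langle f,g\rangle_R^D\,\Phi$ and $(\langle f,g\rangle_R^D)^*=\langle g,f\rangle_R^D$, one finds $\langle\widehat{\nabla}_X\xi,\eta\rangle_R^D=\delta_X(a)^*\,b$ and $\langle\xi,\widehat{\nabla}_X\eta\rangle_R^D=a^*\,\delta_X(b)$. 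Adding these and invoking that $\delta_X$ is a $*$-derivation (it is the infinitesimal generator of the $*$-automorphism group $L$, so $\delta_X(a)^*=\delta_X(a^*)$) gives $\delta_X(a^*)\,b+a^*\,\delta_X(b)=\delta_X(a^*b)=\delta_X(\langle\xi,\eta\rangle_R^D)$, which is exactly the compatibility condition.

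The one genuinely substantive step is the first paragraph: verifying that $P\cdot R$ is a bona fide frame for $P\Xi$, namely the identity $\langle P\cdot R,P\cdot R\rangle_L^E=P$ and the resulting reconstruction formula, where the left projection $P$ must be transported across the right $\DD$-valued inner product via adjointability. Once this is in hand, parts (a) and (b) are formal and run exactly parallel to the verification of $\nabla^G$ on $\Xi$. The only point that requires care is to keep straight the module-action conventions for $\langle\cdot,\cdot\rangle_R^D$ (linearity in the $\DD$-action versus the $\CC$-scalar conjugate-linearity noted in Section~\ref{sec:prelim}), so that the adjoints fall on the correct side throughout.
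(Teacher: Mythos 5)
Your proposal is correct and takes essentially the same route as the paper's own proof: both arguments rest on the reconstruction identity $(P\cdot R)\cdot\langle R,P\cdot\xi\rangle_R^D=\xi$ for $\xi\in P\Xi$ (coming from the imprimitivity relation together with $\langle R,R\rangle_L^E=\Id_E$ and $P^2=P^*=P$), the adjointability $\langle R,P\cdot\xi\rangle_R^D=\langle P\cdot R,\xi\rangle_R^D$, and the Leibniz/$*$-derivation properties of $\delta_X$. Isolating the frame identities up front and writing $\langle\xi,\eta\rangle_R^D=a^*b$ in part (b) is merely a tidier packaging of the paper's inline computation, not a different method.
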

\begin{proof}
For (a), fix $\phi \in \DD$ and $\xi\in P\Xi$. We compute
\[\begin{split}
\widehat{\nabla}_X(\xi \cdot \Phi)&=(P\cdot R)\cdot \delta_X(\langle R, P\cdot (\xi\cdot \Phi)\rangle_R^D)\\
&=(P\cdot R)\cdot \delta_X(\langle R, P\cdot \xi\rangle_R^D\cdot \Phi)\\
&=(P\cdot R)\big(\delta_X(\langle R, P\cdot \xi \rangle_R^D\cdot \Phi+\langle R, P\cdot \xi\rangle_R^D\cdot \delta_X(\Phi)\big)\\
&=(P\cdot R)\delta_X(\langle R, P\cdot \xi\rangle_R^D)\cdot \Phi + (P\cdot R)\langle R, P\cdot \xi\rangle_R^D\cdot \delta_X(\Phi)\\
&=\widehat{\nabla}_X(\xi)\cdot \Phi + P\cdot \langle R, R\rangle_L^E\cdot (P\cdot \xi) \cdot \delta_X(\Phi)
\end{split}\]
Since $\langle R, R\rangle_L^E= 1$ and $P\cdot \xi=\xi$,
\[
\widehat{\nabla}_X(\xi \cdot \Phi)=\widehat{\nabla}_X(\xi)\cdot \Phi + \xi \cdot \delta_X(\Phi).
\]
Thus $\widehat{\nabla}$ is a connection.

For (b), fix $\xi,\eta\in P\Xi$. We proceed as follows:
\[\begin{split}
&\langle \widehat{\nabla}_X(\xi),\eta\rangle_R^D + \langle \xi, \widehat{\nabla}_X(\eta)\rangle_R^D\\
&=\langle (P\cdot R)\cdot \delta_X(\langle R, P\cdot \xi\rangle_R^D),\eta\rangle_R^D + \langle \xi, (P\cdot R)\cdot \delta_X(\langle R, P\cdot \eta\rangle_R^D)\rangle_R^D\\
&=\big(\langle \eta, (P\cdot R)\rangle_R^D\cdot \delta_X(\langle R, P\cdot \xi\rangle_R^D\big)^* + \langle \xi, P\cdot R\rangle_R^D\cdot \delta_X(\langle R, P\cdot \eta\rangle_R^D)\\
&=\delta_X(P\cdot \xi, \eta\rangle_R^D\cdot \langle P\cdot R, \eta\rangle_R^D + \langle \xi, P\cdot R\rangle_R^D\cdot \delta_X(\langle R, P\cdot \eta\rangle_R^D)\\
\end{split}\]
Since $\langle \xi, P\cdot R\rangle_R^D=\langle P\cdot \xi, R\rangle_R^D$ and $\langle R, P\cdot \eta\rangle_R^D=\langle P\cdot R, \eta\rangle_R^D$, the above equation becomes
\[\begin{split}
&=\delta_X(\langle P\cdot \xi, R\rangle_R^D)\cdot \langle P\cdot R, \eta\rangle_R^D + \langle P\cdot \xi, R\rangle_R^D\cdot \delta_X(\langle P\cdot R, \eta\rangle_R^D)\\
&=\delta_X(\langle P\cdot \xi, R\rangle_R^D\cdot \langle P\cdot R, \eta\rangle_R^D)=\delta_X(\langle P\cdot \xi, R\cdot \langle R, P\cdot \eta\rangle_R^D \rangle_R^D) \\
&=\delta_X(\langle P\cdot \xi, P\cdot \eta\rangle_R^D) =\delta_X(\langle \xi, \eta\rangle_R^D),
\end{split}\]
which proves the desired result.
\end{proof}
We determine the curvature of the Grassmannian connection $\widehat{\nabla}$ as follows.
\begin{prop}\label{prop:Grass-curvature}
Let $\widehat{\nabla}$ be the Grassmannian connection given in Proposition~\ref{prop:Grass_conn}. Let $S=P\cdot R$. Then the curvature ${\Theta}_{\widehat{\nabla}}$ of $\widehat{\nabla}$ is $P\EE P$-valued and is given by
\[
{\Theta}_{\widehat{\nabla}}(X,Y)=\langle S\cdot (\delta_X\langle S, S\rangle_R^D\cdot \delta_Y\langle S, S\rangle_R^D-\delta_Y\langle S, S\rangle_R^D\cdot \delta_X\langle S, S\rangle_R^D), S\rangle_L^E.
\]

\end{prop}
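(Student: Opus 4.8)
The plan is to reduce everything to the single frame vector $S = P\cdot R$ and to exploit the algebraic identities it satisfies as a ``frame'' for the projection $P$. First I would rewrite the Grassmannian connection in the cleaner form $\widehat{\nabla}_X(\xi) = S\cdot\delta_X(\langle S, \xi\rangle_R^D)$; this follows from the formula in Proposition~\ref{prop:Grass_conn}, since $\langle R, P\cdot\xi\rangle_R^D = \langle P\cdot R, \xi\rangle_R^D = \langle S, \xi\rangle_R^D$ because $P = P^*$. I would then record the basic identities that $S$ satisfies. Writing $e := \langle S, S\rangle_R^D$, the imprimitivity relation $\zeta\cdot\langle\eta,\zeta'\rangle_R^D = \langle\zeta,\eta\rangle_L^E\cdot\zeta'$ together with $\langle S, S\rangle_L^E = P$ and $P\cdot S = S$ gives $S\cdot e = S$; consequently $e$ is a projection in $\DD$, one has $e\cdot\langle S,\xi\rangle_R^D = \langle S,\xi\rangle_R^D$, and $S\cdot\langle S,\xi\rangle_R^D = P\cdot\xi = \xi$ for every $\xi\in P\Xi$ (the reproducing property).

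Next I would compute the curvature directly on a fixed $\xi\in P\Xi$, abbreviating $\beta := \langle S, \xi\rangle_R^D$. Using $\langle S, S\cdot d\rangle_R^D = e\cdot d$ and the Leibniz rule for $\delta_X$, I obtain $\widehat{\nabla}_X\widehat{\nabla}_Y(\xi) = S\cdot\delta_X(e)\cdot\delta_Y(\beta) + S\cdot\delta_X\delta_Y(\beta)$, where $S\cdot e = S$ has been used to absorb the projection in the second-order term. The analogous expression holds for $\widehat{\nabla}_Y\widehat{\nabla}_X(\xi)$, and since $\delta$ is a Lie-algebra homomorphism we have $\widehat{\nabla}_{[X,Y]}(\xi) = S\cdot(\delta_X\delta_Y - \delta_Y\delta_X)(\beta)$. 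Forming $\Theta_{\widehat{\nabla}}(X,Y) = \widehat{\nabla}_X\widehat{\nabla}_Y - \widehat{\nabla}_Y\widehat{\nabla}_X - \widehat{\nabla}_{[X,Y]}$, the three second-order terms cancel and I am left with the first-order expression $\Theta_{\widehat{\nabla}}(X,Y)(\xi) = S\cdot\delta_X(e)\cdot\delta_Y(\beta) - S\cdot\delta_Y(e)\cdot\delta_X(\beta)$.

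The crucial simplification, and the step I expect to carry the weight of the argument, is the replacement of the mixed term $\delta_Y(\beta)$ by $\delta_Y(e)\,\beta$. Differentiating $e^2 = e$ and using $S\cdot e = S$ yields the identity $S\cdot\delta_X(e)\cdot e = 0$, while differentiating $\beta = e\beta$ gives $\delta_Y(\beta) = \delta_Y(e)\beta + e\,\delta_Y(\beta)$. Combining these, $S\cdot\delta_X(e)\cdot\delta_Y(\beta) = S\cdot\delta_X(e)\cdot\delta_Y(e)\cdot\beta$, and similarly with $X$ and $Y$ interchanged, so that $\Theta_{\widehat{\nabla}}(X,Y)(\xi) = S\cdot\big(\delta_X(e)\delta_Y(e) - \delta_Y(e)\delta_X(e)\big)\cdot\langle S,\xi\rangle_R^D$. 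Finally, applying the imprimitivity relation $\zeta\cdot\langle S,\xi\rangle_R^D = \langle\zeta, S\rangle_L^E\cdot\xi$ with $\zeta = S\cdot(\delta_X(e)\delta_Y(e) - \delta_Y(e)\delta_X(e))$ identifies the curvature as left multiplication by the claimed $\EE$-valued element. That this element lies in $P\EE P$ follows at once from $P\cdot S = S$ and $P = P^*$, which give $P\cdot\Theta_{\widehat{\nabla}}(X,Y) = \Theta_{\widehat{\nabla}}(X,Y) = \Theta_{\widehat{\nabla}}(X,Y)\cdot P$. The only genuine obstacle is assembling the correct bimodule identities, in particular $S\cdot e = S$ and $S\cdot\delta_X(e)\cdot e = 0$; everything else is a bookkeeping exercise in the Leibniz rule and the cancellation of the second-order terms.
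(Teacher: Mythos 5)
Your proof is correct, and while it follows the same overall skeleton as the paper's --- express $\widehat{\nabla}$ through the single frame vector $S=P\cdot R$, compute $\Theta_{\widehat{\nabla}}(X,Y)\cdot\xi$, cancel the second-order terms via $\delta_{[X,Y]}=[\delta_X,\delta_Y]$ and $S\cdot e=S$, and land on the first-order expression $S\cdot\delta_X(e)\cdot\delta_Y(\beta)-S\cdot\delta_Y(e)\cdot\delta_X(\beta)$ --- it diverges from the paper at the decisive step, where the operator expression must be recognized as left multiplication by an element of $\EE$. The paper gets there by invoking the general fact that curvature values lie in $(\EE)^s$, writing out the skew-symmetry identity $\langle\Theta_{\widehat{\nabla}}(X,Y)\cdot\xi,\eta\rangle_R^D+\langle\xi,\Theta_{\widehat{\nabla}}(X,Y)\cdot\eta\rangle_R^D=0$, and then specializing $\xi=P\cdot R$ and applying $P\cdot R$ on the left to extract the formula. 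You instead stay entirely inside the projection calculus: differentiating $e^2=e$ to get $S\cdot\delta_X(e)\cdot e=0$, differentiating $\beta=e\beta$ to substitute $\delta_Y(\beta)=\delta_Y(e)\beta+e\,\delta_Y(\beta)$, and then reading off the answer from the imprimitivity relation $\zeta\cdot\langle S,\xi\rangle_R^D=\langle\zeta,S\rangle_L^E\cdot\xi$. Your route buys two things: it is more self-contained, since it never appeals to the a priori skew-symmetry of the curvature (a fact which, for the submodule $P\Xi$, properly concerns the endomorphism algebra $P\EE P$ rather than $\EE$, so the paper's invocation is slightly loose there); and it makes the replacement of $\delta_Y(\beta)$ by $\delta_Y(e)\beta$ --- which in the paper is buried inside the evaluation trick --- into a transparent one-line identity. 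All the bimodule identities you use ($\langle R,P\cdot\xi\rangle_R^D=\langle S,\xi\rangle_R^D$, $\langle S,S\rangle_L^E=P$, $\langle S,S\cdot d\rangle_R^D=e\cdot d$, and the imprimitivity relation) are exactly the ones the paper itself relies on in Lemma~\ref{lem:1} and Proposition~\ref{prop:Grass_conn}, so they are available under the paper's conventions, and your final verification that the curvature lies in $P\EE P$ via $P\cdot S=S$ and $P=P^*$ is sound.
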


\begin{proof}
Fix $\zeta\in P\Xi$, then we compute
\[\begin{split}
&\widehat{\nabla}_X\widehat{\nabla}_Y(\zeta)=(P\cdot R)(\delta_X\langle R, P\cdot \widehat{\nabla}_Y(\zeta)\rangle_R^D)\\
&=(P\cdot R)(\delta_X\langle R, P\cdot(P\cdot R)\cdot \delta_Y\langle R, (P\cdot \zeta)\rangle_R^D\rangle_R^D)\\
&=(P\cdot R)(\delta_X(\langle R, P\cdot R\rangle_R^D\cdot \delta_Y\langle R, P\cdot \zeta\rangle_R^D))\\
&=(P\cdot R)(\delta_X(\langle R, P\cdot R\rangle_R^D)\cdot \delta_Y(\langle R, P\cdot \zeta\rangle_R^D) + \langle R, P\cdot R\rangle_R^D\cdot \delta_X\delta_Y\langle R, P\cdot \zeta\rangle_R^D)
\end{split}\]
Similarly, we have
\[\begin{split}
&\widehat{\nabla}_Y\widehat{\nabla}_X(\zeta)\\
&=(P\cdot R)(\delta_Y(\langle R, P\cdot R\rangle_R^D)\cdot \delta_X(\langle R, P\cdot \zeta\rangle_R^D) + \langle R, P\cdot R\rangle_R^D\cdot \delta_Y\delta_X\langle R, P\cdot \zeta\rangle_R^D)
\end{split}\]
Also
\[
\widehat{\nabla}_{[X,Y]}(\zeta)=(P\cdot R)\cdot \delta_{[X,Y]}\langle R, P\cdot \zeta\rangle_R^D.
\]
Thus
\[\begin{split}
&{\Theta}_{\widehat{\nabla}}(X,Y)\cdot \zeta=\widehat{\nabla}_X\widehat{\nabla}_Y(\zeta)-\widehat{\nabla}_Y\widehat{\nabla}_X(\zeta)-\widehat{\nabla}_{[X,Y]}(\zeta)\\
&=(P\cdot R)(\delta_X\langle R, P\cdot R\rangle_R^D\cdot \delta_Y\langle R, P\cdot \zeta\rangle_R^D-\delta_Y\langle R, P\cdot R\rangle_R^D\cdot \delta_X\langle R, P\cdot \zeta\rangle_R^D).
\end{split}\]

Now we claim that for $\xi\in P\Xi$, we have
\[\begin{split}
&(P\cdot R)\big(\delta_X\langle R, P\cdot R\rangle_R^D\cdot \delta_Y\langle R, P\cdot \xi\rangle_R^D-\delta_Y\langle R, P\cdot R\rangle_R^D\cdot \delta_X\langle R, P\cdot \xi\rangle_R^D\big)\\
&=\langle P\cdot R\cdot (\delta_X\langle R, P\cdot R\rangle_R^D\cdot \delta_Y\langle R, P\cdot R\rangle_R^D-\delta_Y\langle P\cdot R, R\rangle_R^D\cdot \delta_X\langle P\cdot R, R\rangle_R^D), P\cdot R\rangle_L^E\cdot \xi.
\end{split}\]
To see this, note that the value of ${\Theta}_{\widehat{\nabla}}(X,Y)$ lies in $(\EE)^s$, the set of skew-symmetric elements of $\EE$. So ${\Theta}_{\widehat{\nabla}}(X,Y)$ satisfies
\[
\langle {\Theta}_{\widehat{\nabla}}(X,Y)\cdot \xi, \eta\rangle_R^D+\langle \xi, {\Theta}_{\widehat{\nabla}}(X,Y)\cdot \eta\rangle_R^D=0\quad\text{for all $\xi,\eta\in P\Xi$}.
\]
We compute
\[\begin{split}
&\langle {\Theta}_{\widehat{\nabla}}(X,Y)\cdot \xi, \eta\rangle_R^D\\
&=\langle P\cdot R(\delta_X\langle R, P\cdot R\rangle_R^D\cdot \delta_Y\langle R, P\cdot \xi\rangle_R^D-\delta_Y\langle R, P\cdot R\rangle_R^D\cdot \delta_X\langle R, P\cdot \xi\rangle_R^D), \eta\rangle_R^D\\
&=(\langle \eta, P\cdot R\rangle_R^D(\delta_X\langle R, P\cdot R\rangle_R^D\cdot \delta_Y\langle R, P\cdot \xi\rangle_R^D-\delta_Y\langle R, P\cdot R\rangle_R^D\cdot \delta_X\langle R, P\cdot \xi\rangle_R^D))^*\\
&=(\delta_Y\langle P\cdot \xi, R\rangle_R^D\cdot \delta_X\langle P\cdot R, R\rangle_R^D-\delta_X\langle P\cdot R, R\rangle_R^D\cdot \delta_Y\langle P\cdot R, P\cdot R\rangle_R^D)\cdot \langle P\cdot R, \eta\rangle_R^D.
\end{split}\]
Similarly,
\[\begin{split}
&\langle \xi, {\Theta}_{\widehat{\nabla}}(X,Y)\cdot \eta\rangle_R^D\\
&=\langle \xi, P\cdot R\rangle_R^D(\delta_X\langle R, P\cdot R\rangle_R^D\cdot \delta_Y\langle R, P\cdot \eta\rangle_R^D-\delta_Y\langle R, P\cdot R\rangle_R^D\cdot \delta_X\langle R, P\cdot \eta\rangle_R^D)
\end{split}\]
Thus
\[\begin{split}
&\langle \xi, P\cdot R\rangle_R^D(\delta_X\langle R, P\cdot R\rangle_R^D\cdot \delta_Y\langle R, P\cdot \eta\rangle_R^D-\delta_Y\langle R, P\cdot R\rangle_R^D\cdot \delta_X\langle R, P\cdot \eta\rangle_R^D)\\
&=(\delta_X\langle P\cdot \xi, R\rangle_R^D\cdot \delta_Y\langle P\cdot R, R\rangle_R^D - \delta_Y\langle P\cdot \xi, R\rangle_R^D\cdot \delta_X\langle P\cdot R, R\rangle_R^D)\cdot \langle P\cdot R, \eta\rangle_R^D
\end{split}\]
Since $\xi\in \Xi$ is arbitrary, we choose $\xi= P\cdot R$ and apply $P\cdot R$ on the left of the above equation. Then we obtain
\[\begin{split}
&(P\cdot R)\cdot (\delta_X\langle R, P\cdot R\rangle_R^D\cdot \delta_Y\langle R, P\cdot \eta\rangle_R^D-\delta_Y\langle R, P\cdot R\rangle_R^D\cdot \delta_X\langle R, P\cdot \eta\rangle_R^D)\\
&= \langle (P\cdot R)\cdot (\delta_X\langle P\cdot R, R\rangle_R^D\cdot \delta_Y\langle P\cdot R, R\rangle_R^D-\delta_Y\langle P\cdot R, R\rangle_R^D\cdot \delta_X\langle P\cdot R, R\rangle_R^D), P\cdot R\rangle_L^E\cdot \eta,
\end{split}\]
which proves the claim. Therefore
\[\begin{split}
&{\Theta}_{\widehat{\nabla}}(X,Y)\cdot \zeta\\
&=(P\cdot R)(\delta_X\langle R, P\cdot R\rangle_R^D\cdot \delta_Y\langle R, P\cdot \zeta\rangle_R^D-\delta_Y\langle R, P\cdot R\rangle_R^D\cdot \delta_X\langle R, P\cdot \zeta\rangle_R^D)\\
&=\langle (P\cdot R)\cdot (\delta_X\langle P\cdot R, R\rangle_R^D\cdot \delta_Y\langle P\cdot R, R\rangle_R^D-\delta_Y\langle P\cdot R, R\rangle_R^D\cdot \delta_X\langle P\cdot R, R\rangle_R^D), P\cdot R\rangle_L^E\cdot \zeta.
\end{split}\]
Thus
\[\begin{split}
&{\Theta}_{\widehat{\nabla}}(X,Y)\\
&=\langle (P\cdot R)\cdot (\delta_X\langle P\cdot R, R\rangle_R^D\cdot \delta_Y\langle P\cdot R, R\rangle_R^D-\delta_Y\langle P\cdot R, R\rangle_R^D\cdot \delta_X\langle P\cdot R, R\rangle_R^D), P\cdot R\rangle_L^E.
\end{split}\]
Consequently, the value of ${\Theta}_{\widehat{\nabla}}(X,Y)$ lies in $P\EE P$.

Since $P$ is a projection, $\langle P\cdot R, R\rangle_R^D=\langle P\cdot R, P\cdot R\rangle_R^D$. Thus by letting $S=P\cdot R$ we obtain
\[\begin{split}
&{\Theta}_{\widehat{\nabla}}(X,Y)=\langle S\cdot (\delta_X\langle S, S\rangle_R^D\cdot \delta_Y\langle S, S\rangle_R^D-\delta_Y\langle S, S\rangle_R^D\cdot \delta_X\langle S, S\rangle_R^D), S\rangle_L^E,
\end{split}\]
which proves the desired result.
\end{proof}

For a given $S=P\cdot R$, computing the curvature $\Theta_{\widehat{\nabla}}$ explicitly is very complicated. However it is possible to verify that the curvature ${\Theta}_{\widehat{\nabla}}(x,y,p)\ne 0$ for $p\ne 0$. Thus it differs from the Grassmannian curvature $\Theta_{\nabla^G}$ of $\nabla^G$ on $\Xi,$ since $\Theta_{\nabla^G}$ is only supported on $p=0$.

\subsection{Coupling constants associated to $P\EE\otimes_{\EE}\Xi$}

Let $A$ and $B$ be unital $C^*$-algebras, and let $X$ be a left-$A$ and right-$B$ projective bimodule. Let $\tau$ be a normalized faithful trace on $A$ and let $\Ind_X(\tau)$ be the trace on $B$ induced by $X$. According to \cite{P1}, the coupling constant from $A$ to $B$ for $\tau$ determined by $X$, denoted by $C_A^B(X)(\tau)$, is given by
\[
C_A^B(X)(\tau)=\Ind_X(\tau)(\Id_B)\tau(\Id_A)^{-1}.
\]
Similarly, let $\tau'$ be a faithful trace on $B$, then we define the coupling constant from $B$ to $A$ for $\tau'$ determined by $X$ by
\[
C_B^A(X)(\tau')=\Ind_X(\tau')(\Id_A)\tau'(\Id_B)^{-1}.
\]
For example, let $\Xi$ be the left-$\EE$ and right-$\DD$ projective module given before. We know that $\Xi$ is isomorphic to the left-$Q\DD Q$ and right $\DD$ projective bimodule $Q\DD$, where $Q=\langle R, R\rangle_D$ is the projection in $\DD$ and the function $R\in \Xi$ satisfies $\langle R,R\rangle_E=\Id_E$. Let $\tau_D$ be the normalized faithful trace on $\DD$ given before. Then the induced trace $\tau_E$ on $\EE$ is given by $\tau_E(\langle f,g\rangle_E)=\tau_D(\langle g,f\rangle_D)$ for $f,g\in \Xi$, and $\tau_E(\Id_E)=\tau_D(Q)=2\mu$. So we have
\[
C_{\DD}^{\EE}(\Xi)(\tau_D)=\Ind_\Xi(\tau_D)(\Id_{\EE})\tau_D(\Id_{\DD})^{-1}=\tau_E(\Id_{\EE})\tau_D(\Id_{\DD})^{-1}=2\mu.
\]
Since the left $Q\DD Q$--right $\DD$ projective module $Q\DD$ is algebraically isomorphic to the left $\EE$--right $\DD$ projective module $\Xi$ via the map $F$ described in Lemma~\ref{lem:1}, we obtain
\[
C_{\DD}^{Q\DD Q}(Q\DD)(\tau_D)=\Ind_{Q\DD}(\tau_D)(\Id_{Q\DD Q})\tau_D(\Id_D)^{-1}=\tau_D(Q)=2\mu.
\]

Now let $\tau'_E$ be the faithful normalized trace on $\EE$. Then the discussion of Example~2.2 in \cite{P1} implies that
\begin{equation}\label{eq:CC_EDXi}
C^{\EE}_{\DD}(\Xi)(\tau'_E)=\frac{1}{2\mu}.
\end{equation}
Since $P$ is a projection of $\EE$ with trace $2\nu$ given in \cite{Ab5} and $\tau'$ is the normalized trace on $\EE$, we have $\tau'_E(P)=\frac{2\nu}{2\mu}$. So we have
\[
C_{\EE}^{P\EE P}(P\EE)(\tau'_E)=\Ind_{P\EE}(\tau'_E)(\Id_{P\EE P})\tau'_E(\Id_E)^{-1}=\tau'_E(P)=\frac{2\nu}{2\mu}=\frac{\nu}{\mu}.
\]
Thus the discussion of Example~2.2 of \cite{P1} implies that
\[
C_{P\EE P}^{\EE}(P\EE)(\tau'_{P\EE P})=\frac{\mu}{\nu},
\]
where $\tau'_{P\EE P}$ is the faithful normalized trace on $P\EE P$.

By Proposition~2.4 of \cite{P1}, we obtain
\[\begin{split}
C_{P\EE P}^{\DD}(P\EE\otimes_{\EE}\Xi)(\tau'_{P\EE P})&=C_{\EE}^{\DD}(\Xi)(\Ind_{P\EE}(\tau'_{P\EE P})) C_{P\EE P}^{\EE}(P\EE)(\tau'_{P\EE P})\\
&=\frac{\nu}{\mu}\cdot \frac{1}{2\nu}\cdot\frac{\mu}{\nu}=\frac{1}{2\nu}.
\end{split}\]
Thus
\[
C_{\DD}^{P\EE P}(P\EE\otimes_{\EE}\Xi)(\tau_D)=2\nu.
\]

\subsection{Connections on $P\EE \otimes_{\EE} \Xi$}
Following Lemma~5.1 of \cite{CR}, we will construct a connection on the balanced tensor product of projective modules $P\EE\otimes_{\EE} \Xi$ in this section.

Recall that  $\delta$ is the Heisenberg group action on $\DD$ and $\Xi$ is the left $\EE$ -- right $\DD$ projective module in Section \ref{sec:prelim}. Let $\nabla$ be the compatible connection with constant curvature given in \eqref{eq:min_conn}, and let $P$ be the projection with trace $2 \nu$ of \cite{Ab5} described in Section~\ref{sec:trace_2nu}. Then there is a canonical left $P\EE P$ -- right $\EE$ projective module $P\EE$ with the inner products given by: 
\begin{equation}\label{eq:inner-prod-E}
\langle f,g \rangle_R^E=f^* g \;\quad\text{and} \;\quad \langle f,g\rangle_L^{PEP}=fg^* \qquad\text{for} \;\; f,g \in P\EE.
\end{equation}
Let $\widehat{\delta}$ be a covariant derivative on $\EE$ given by
\[
\widehat{\delta}_X(T)=[\nabla_X, T]\quad\text{for $T\in \EE$.}
\]
Then we can find a Grassmannian connection $\nabla^E$ on $P\EE$ given as follows.

\begin{prop}
Let $\mathfrak{h}$ be the Heisenberg Lie algebra with basis given in \eqref{eq:basis-XYZ}.
For $X\in \mathfrak{h}$, we define a map $\nabla^E_X$ on $P\EE$ by
\[
\nabla^E_X(f)=P \widehat{\delta}_X(f).
\]
Then $\nabla^E$ is a compatible connection for $\widehat{\delta}$ with respect to $\langle \cdot,\cdot\rangle_R^E$ given in \eqref{eq:inner-prod-E}.
\end{prop}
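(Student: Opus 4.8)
The plan is to follow the strategy of Lemma~5.1 of \cite{CR} and verify directly the two defining properties of a compatible connection: the Leibniz rule for $\widehat{\delta}$ on the right $\EE$-module $P\EE$ and compatibility with $\langle \cdot, \cdot\rangle_R^E$. Two preliminary facts drive everything: $\widehat{\delta}_X$ is a $*$-derivation on $\EE$, and every $f\in P\EE$ satisfies $Pf=f$ and $f^*P=f^*$, since $P=P^2=P^*$.

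First I would record that $\widehat{\delta}_X(T)=[\nabla_X,T]$ maps $\EE$ into $\EE$: for $\Phi\in\DD$ and $\xi\in\Xi$, the Leibniz rule \eqref{eq:nabla-der} for $\nabla$ together with the right $\DD$-linearity of $T$ gives $[\nabla_X,T](\xi\cdot\Phi)=([\nabla_X,T]\xi)\cdot\Phi$, so $[\nabla_X,T]$ is again a right $\DD$-module endomorphism, hence an element of $\EE$. A one-line commutator manipulation shows $\widehat{\delta}_X$ is a derivation, $\widehat{\delta}_X(ST)=\widehat{\delta}_X(S)T+S\widehat{\delta}_X(T)$. The key point is that the compatibility \eqref{eq:nabla-comp} of $\nabla$ forces $\widehat{\delta}_X$ to be a $*$-derivation, i.e.\ $\widehat{\delta}_X(T^*)=\widehat{\delta}_X(T)^*$. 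Indeed, expanding $\langle[\nabla_X,T]\xi,\eta\rangle_R^D$ via \eqref{eq:nabla-comp} and using that $T^*$ is the adjoint of $T$ for $\langle\cdot,\cdot\rangle_R^D$, the two $\delta_X$-terms cancel because $\langle T\xi,\eta\rangle_R^D=\langle\xi,T^*\eta\rangle_R^D$, leaving
\[
\langle[\nabla_X,T]\xi,\eta\rangle_R^D=\langle\xi,[\nabla_X,T^*]\eta\rangle_R^D,
\]
which is exactly $\widehat{\delta}_X(T)^*=\widehat{\delta}_X(T^*)$.

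With these in hand, the two required properties reduce to bookkeeping with the projection relations. For the Leibniz rule, given $f\in P\EE$ and $a\in\EE$, I would compute
\[
\nabla^E_X(fa)=P\widehat{\delta}_X(fa)=P\widehat{\delta}_X(f)\,a+Pf\,\widehat{\delta}_X(a)=\nabla^E_X(f)\,a+f\,\widehat{\delta}_X(a),
\]
using the derivation property and $Pf=f$. For compatibility, I would expand
\[
\langle\nabla^E_Xf,g\rangle_R^E+\langle f,\nabla^E_Xg\rangle_R^E=(P\widehat{\delta}_Xf)^*g+f^*(P\widehat{\delta}_Xg)=\widehat{\delta}_X(f)^*g+f^*\widehat{\delta}_X(g),
\]
where I invoke $P^*=P$ together with $f^*P=f^*$ and $Pg=g$. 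Since $\widehat{\delta}_X$ is a $*$-derivation, the right-hand side equals $\widehat{\delta}_X(f^*)g+f^*\widehat{\delta}_X(g)=\widehat{\delta}_X(f^*g)=\widehat{\delta}_X(\langle f,g\rangle_R^E)$, which is precisely the compatibility identity.

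The main obstacle is the $*$-derivation identity $\widehat{\delta}_X(T^*)=\widehat{\delta}_X(T)^*$: everything else is routine manipulation with $Pf=f$ and $f^*P=f^*$. I expect transferring the compatibility of $\nabla$ on $\Xi$ into a clean statement living purely inside $\EE$ to be the one step that genuinely uses the hypotheses on $\nabla$, and the cancellation of the $\delta_X$-terms above is the crux of that argument.
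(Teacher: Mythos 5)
Your proof is correct and follows essentially the same route as the paper's: the identical Leibniz computation using $Pf=f$, and the identical compatibility expansion using $P=P^*$, $Pg=g$, and $f^*P=f^*$. The only difference is that you explicitly verify that $\widehat{\delta}_X$ is a well-defined $*$-derivation on $\EE$ (deducing $\widehat{\delta}_X(T^*)=\widehat{\delta}_X(T)^*$ from the compatibility of $\nabla$ on $\Xi$), a fact the paper's proof uses tacitly in the final step $\widehat{\delta}_X(f)^*g+f^*\widehat{\delta}_X(g)=\widehat{\delta}_X(f^*g)$; this is a useful supplement rather than a different approach.
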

\begin{proof}
Fix $f\in P\EE$ and $\Phi \in \EE$. Note that the right action of $\EE$ on $P\EE$ is just a $C^*$-product. We compute
\[\begin{split}
\nabla^E_X(f\cdot\Phi)&=P\widehat{\delta}_X(f\Phi)=P(\widehat{\delta}_X(f)\Phi+f\widehat{\delta}_X(\Phi))\\
&=P\widehat{\delta}_X(f)\Phi+Pf\widehat{\delta}_X(\Phi) =\nabla^E_X(f)\cdot \Phi+f\cdot \widehat{\delta}_X(\Phi)
\end{split}\]
since $f\in P\EE$ implies $Pf=f$. Thus $\nabla^E$ is a connection on $\EE$.

To see the compatibility, fix $f,g\in P\EE$. We have that
\[\begin{split}
&\langle \nabla^E_X(f), g\rangle_R^E+\langle f,\nabla^E_X(g)\rangle_R^D=\langle P\widehat{\delta}_X(f), g\rangle_R^E+\langle f, P\widehat{\delta}_X(g)\rangle_R^E\\
&=(P\widehat{\delta}_X(f))^*g+f^*(P\widehat{\delta}_X(g))=\widehat{\delta}_X(f)^*P^*g+f^*P\widehat{\delta}_X(g)\\
&=\widehat{\delta}_X(f)^*g+f^*\widehat{\delta}_X(g)\quad\text{since $f,g\in P\EE$}\\
&=\widehat{\delta}_X(f^*g)=\widehat{\delta}_X(\langle f,g\rangle_R^E),
\end{split}\]
which proves the result.
\end{proof}

\begin{prop}
Let $\mathfrak{h}$ be the Heisenberg Lie algebra with basis given in \eqref{eq:basis-XYZ}.
The curvature  $\Theta_{\nabla^E}$ of $\nabla^E$ acts on $P\EE$ by
\[
\Theta_{\nabla^E}(X,Y)=\widehat{\delta}_X(P)\widehat{\delta}_Y(P)-\widehat{\delta}_Y(P)\widehat{\delta}_X(P)
\]
for $X,Y\in \mathfrak{h}$.
\end{prop}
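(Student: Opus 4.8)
The plan is to expand the definition $\Theta_{\nabla^E}(X,Y)=\nabla^E_X\nabla^E_Y-\nabla^E_Y\nabla^E_X-\nabla^E_{[X,Y]}$ on an element $f\in P\EE$ and exploit that $\nabla^E_X=P\widehat{\delta}_X(\cdot)$ together with the Leibniz rule for $\widehat{\delta}$ and the idempotency $P^2=P$. First I would compute $\nabla^E_X\nabla^E_Y(f)=P\widehat{\delta}_X\big(P\widehat{\delta}_Y(f)\big)=P\widehat{\delta}_X(P)\,\widehat{\delta}_Y(f)+P\widehat{\delta}_X\widehat{\delta}_Y(f)$, using $P^2=P$ on the second summand. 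Antisymmetrizing in $X,Y$ and subtracting $\nabla^E_{[X,Y]}(f)=P\widehat{\delta}_{[X,Y]}(f)$ splits the curvature into a \emph{first-order} part $P\widehat{\delta}_X(P)\widehat{\delta}_Y(f)-P\widehat{\delta}_Y(P)\widehat{\delta}_X(f)$ and a \emph{second-order} part $P\big(\widehat{\delta}_X\widehat{\delta}_Y-\widehat{\delta}_Y\widehat{\delta}_X-\widehat{\delta}_{[X,Y]}\big)(f)$.

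The next step is to show the second-order part vanishes, i.e. that $\widehat{\delta}$ is a \emph{flat} connection on $\EE$. Since $\widehat{\delta}_X(T)=[\nabla_X,T]$, the Jacobi identity for commutators gives $\big(\widehat{\delta}_X\widehat{\delta}_Y-\widehat{\delta}_Y\widehat{\delta}_X-\widehat{\delta}_{[X,Y]}\big)(T)=[\,\Theta_\nabla(X,Y),T\,]$ for every $T\in\EE$. Here $\nabla=\nabla^0$ has the constant curvature recorded in \eqref{eq:min_curv}, so $\Theta_\nabla(X,Y)$ is in each case either $0$ or the central element $\frac{\pi i}{\mu}\Id_E$; in either case it commutes with $T$, so the bracket is zero. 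Thus $[\widehat{\delta}_X,\widehat{\delta}_Y]=\widehat{\delta}_{[X,Y]}$, the second-order term drops out after applying $P$, and we are left with $\Theta_{\nabla^E}(X,Y)f=P\widehat{\delta}_X(P)\widehat{\delta}_Y(f)-P\widehat{\delta}_Y(P)\widehat{\delta}_X(f)$.

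It remains to rewrite this in terms of $\widehat{\delta}(P)$ alone and to remove the leading $P$. Differentiating $P^2=P$ yields $\widehat{\delta}_X(P)=\widehat{\delta}_X(P)P+P\widehat{\delta}_X(P)$, from which I would extract the two facts that I expect to be the workhorses: $P\widehat{\delta}_X(P)P=0$ and, more generally, that each $\widehat{\delta}_X(P)$ is \emph{off-diagonal} for the decomposition $\Id_E=P+(\Id_E-P)$. Writing $f=Pf$ and applying the Leibniz rule to $\widehat{\delta}_Y(f)=\widehat{\delta}_Y(P)f+P\widehat{\delta}_Y(f)$, the identity $P\widehat{\delta}_X(P)P=0$ kills the second summand and converts $P\widehat{\delta}_X(P)\widehat{\delta}_Y(f)$ into $P\widehat{\delta}_X(P)\widehat{\delta}_Y(P)f$, so that $\Theta_{\nabla^E}(X,Y)f=P\big(\widehat{\delta}_X(P)\widehat{\delta}_Y(P)-\widehat{\delta}_Y(P)\widehat{\delta}_X(P)\big)f$. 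Finally, the off-diagonality shows that products such as $\widehat{\delta}_X(P)\widehat{\delta}_Y(P)$ are block-diagonal and already fixed by left multiplication by $P$ when acting on $P\EE$, so the outer $P$ may be dropped and the claimed formula $\Theta_{\nabla^E}(X,Y)=\widehat{\delta}_X(P)\widehat{\delta}_Y(P)-\widehat{\delta}_Y(P)\widehat{\delta}_X(P)$ follows. The main obstacle is really the bookkeeping in this last paragraph: the flatness of $\widehat{\delta}$ hinges on the centrality of $\Theta_{\nabla^0}$, while both the reduction to $\widehat{\delta}(P)$ and the removal of the outer projection rest on the off-diagonal structure forced by $P^2=P$.
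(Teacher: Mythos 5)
Your proof is correct, and its overall skeleton (expand the curvature, apply the Leibniz rule, use $P^2=P$, reduce everything to $\widehat{\delta}(P)$) matches the paper's; but it differs in two places, one of which is a genuine improvement. First, the paper's proof passes from the line containing $P\widehat{\delta}_X\widehat{\delta}_Y(g)-P\widehat{\delta}_Y\widehat{\delta}_X(g)-P\widehat{\delta}_{[X,Y]}(g)$ to the next line by silently discarding these second-order terms; you are right that this needs justification, and your argument supplies it: by the Jacobi identity $\bigl(\widehat{\delta}_X\widehat{\delta}_Y-\widehat{\delta}_Y\widehat{\delta}_X-\widehat{\delta}_{[X,Y]}\bigr)(T)=[\Theta_{\nabla}(X,Y),T]$, and this vanishes because the curvature of the connection $\nabla=\nabla^0$ of \eqref{eq:min_conn} used to define $\widehat{\delta}$ is, by \eqref{eq:min_curv}, a scalar multiple of $\Id_E$ and hence central in $\EE$. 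Making this explicit also clarifies that the stated formula depends on the centrality of $\Theta_{\nabla^0}$; for a general compatible $\nabla$ the curvature of $\nabla^E$ would acquire the extra term $P\,[\Theta_\nabla(X,Y),\,\cdot\,]$. Second, for the endgame the paper computes $\widehat{\delta}_X(P)\widehat{\delta}_Y(g)$ in two ways --- once expanding $P=P^2$ in the first factor, once expanding $g=Pg$ in the second --- and subtracts, obtaining $P\widehat{\delta}_X(P)\widehat{\delta}_Y(g)=\widehat{\delta}_X(P)\widehat{\delta}_Y(P)g$ in a single stroke; you instead invoke $P\widehat{\delta}_X(P)P=0$ to insert $\widehat{\delta}_Y(P)$, and then remove the leading $P$ by observing that a product of two off-diagonal elements is block-diagonal, so it already maps $P\EE$ into $P\EE$. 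The two arguments rest on exactly the same identity (the derivative of $P^2=P$), so this is a difference of bookkeeping rather than of substance: the paper's subtraction trick is shorter, while your block-decomposition viewpoint is more transparent and makes the structural reason for each cancellation visible.
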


\begin{proof}
Fix $f,g\in P\EE$ and fix $X,Y\in \mathfrak{g}$. First we compute

\[\begin{split}
\Theta_{\nabla^E}(X,Y)\cdot g &=\nabla^E_X\nabla^E_Y(g)-\nabla^E_Y\nabla^E_X(g)-\nabla^E_{[X,Y]}(g)\\
&=P \widehat{\delta}_X(P \widehat{\delta}_Y(g))-P\widehat{\delta}_Y(P\widehat{\delta}_X(g))-P\widehat{\delta}_{[X,Y]}(g)\\
&=P(\widehat{\delta}_X(P)\widehat{\delta}_Y(g)+P\widehat{\delta}_X\widehat{\delta}_Y(g))-P(\widehat{\delta}_Y(P)\widehat{\delta}_X(g)
+P\widehat{\delta}_Y\widehat{\delta}_X(g))-P\widehat{\delta}_{[X,Y]}(g)\\
&=P\widehat{\delta}_X(P)\widehat{\delta}_Y(g)-P\widehat{\delta}_Y(P)\widehat{\delta}_X(g).\\
\end{split}\]

Since $P=P^2$ and $g=Pg$ for $g\in P\EE$, we have
\begin{equation}\label{eq:Pg1}
\begin{split}
\widehat{\delta}_X(P)\widehat{\delta}_Y(g)=\widehat{\delta}_X(P^2)\widehat{\delta}_Y(g)&=(P\widehat{\delta}_X(P)+\widehat{\delta}_X(P)P)\widehat{\delta}_Y(g),\\
&=P\widehat{\delta}_X(P)\widehat{\delta}_Y(g)+\widehat{\delta}_X(P)P\widehat{\delta}_Y(g)
\end{split}\end{equation}
and
\begin{equation}\label{eq:Pg2}
\begin{split}
\widehat{\delta}_X(P)\widehat{\delta}_Y(g)=\widehat{\delta}_X(P)\widehat{\delta}_Y(Pg) &=\widehat{\delta}_X(P) (\widehat{\delta}_Y(P)g+ P\widehat{\delta}_Y(g))\\
&=\widehat{\delta}_X(P)\widehat{\delta}_Y(P)g+\widehat{\delta}_X(P)P\widehat{\delta}_Y(g).
\end{split}\end{equation}

Putting \eqref{eq:Pg1} and \eqref{eq:Pg2} together then gives
\[
P\widehat{\delta}_X(P)\widehat{\delta}_Y(g)=\widehat{\delta}_X(P)\widehat{\delta}(P)g.
\]
Therefore we have
\[\begin{split}
\Theta_{\nabla^E}(X,Y)\cdot g & = P\widehat{\delta}_X(P)\widehat{\delta}_Y(g)-P\widehat{\delta}_Y(P)\widehat{\delta}_X(g)\\
&=\widehat{\delta}_X(P)\widehat{\delta}_Y(P)g -\widehat{\delta}_Y(P)\widehat{\delta}_X(P) g,
\end{split}\]
which completes the proof.

\end{proof}

Now we can construct a tensor product connection on $P\EE\otimes_{\EE} \Xi:$ 

\begin{prop}\label{prop:tensor_conn}
Let $\nabla$ be a compatible connection for $\widehat{\delta}$ with respect to $\langle \cdot,\cdot\rangle_R^E$ and $\nabla'$ be a compatible connection for $\delta$ with respect to $\langle \cdot,\cdot \rangle_R^D$.
Then the tensor product map $\nabla\otimes_{\EE}\nabla':=\nabla\otimes_{\EE} I_\Xi+I_{P\EE}\otimes_{\EE} \nabla'$ on $P\EE\otimes_{\EE} \Xi$ is a compatible connection for $\delta$ with respect to the inner product given by
\[
\langle f\otimes \xi, g\otimes \eta\rangle^{\otimes}:=\langle \xi, \langle g,f\rangle_R^E\cdot \eta\rangle_R^D.
\]
Let $\Theta_{\nabla\otimes_{\EE}\nabla'}$ be the curvature of $\nabla\otimes_{\EE}\nabla'$. Then
\[
\Theta_{\nabla\otimes_{\EE}\nabla'}(X,Y)=\Theta_\nabla(X,Y)\otimes_{\EE} I_\Xi + I_{P\EE}\otimes_{\EE} \Theta_{\nabla'} (X,Y).
\]
\end{prop}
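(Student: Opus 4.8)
The statement bundles four claims: that $\nabla\otimes_{\EE}\nabla'$ descends to the balanced tensor product, that it is a connection for $\delta$, that it is compatible with $\langle\cdot,\cdot\rangle^{\otimes}$, and the stated formula for its curvature. The plan is to define the operator on elementary tensors by
\[
(\nabla\otimes_{\EE}\nabla')_X(f\otimes\xi)=\nabla_X(f)\otimes\xi+f\otimes\nabla'_X(\xi)
\]
and to verify the four claims in this order, since each later one presupposes that the operator is well defined.

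The first and most delicate step is descent to $P\EE\otimes_{\EE}\Xi$. A map prescribed on elementary tensors passes to the balanced tensor product precisely when it respects the relation $f\cdot\Psi\otimes\xi=f\otimes\Psi\cdot\xi$ for $\Psi\in\EE$. Evaluating the definition on both sides and invoking the Leibniz rule $\nabla_X(f\cdot\Psi)=\nabla_X(f)\cdot\Psi+f\cdot\widehat\delta_X(\Psi)$ for the $\EE$-connection $\nabla$, the two expressions are seen to differ only by a term carrying the factor $\widehat\delta_X(\Psi)\cdot\xi+\Psi\cdot\nabla'_X(\xi)-\nabla'_X(\Psi\cdot\xi)$. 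This vanishes exactly because $\widehat\delta$ is the covariant derivative on $\EE$ induced by $\nabla'$, that is $\widehat\delta_X(\Psi)\cdot\xi=[\nabla'_X,\Psi]\cdot\xi=\nabla'_X(\Psi\cdot\xi)-\Psi\cdot\nabla'_X(\xi)$. I expect this balancing identity to be the main obstacle; once it is in place the remaining checks are formal.

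Granting well-definedness, the connection property for $\delta$ follows from a one-line computation: for $\Phi\in\DD$ I would expand $(\nabla\otimes_{\EE}\nabla')_X\bigl((f\otimes\xi)\cdot\Phi\bigr)=(\nabla\otimes_{\EE}\nabla')_X(f\otimes\xi\cdot\Phi)$ and use the Leibniz rule of $\nabla'$ for $\delta$ to obtain $(\nabla\otimes_{\EE}\nabla')_X(f\otimes\xi)\cdot\Phi+(f\otimes\xi)\cdot\delta_X(\Phi)$. For compatibility I would apply $\delta_X$ to $\langle f\otimes\xi,g\otimes\eta\rangle^{\otimes}=\langle\xi,\langle g,f\rangle_R^E\cdot\eta\rangle_R^D$, use compatibility of $\nabla'$ for $\langle\cdot,\cdot\rangle_R^D$, expand $\nabla'_X(\langle g,f\rangle_R^E\cdot\eta)$ by the induced-derivation identity from the previous paragraph, and rewrite $\widehat\delta_X(\langle g,f\rangle_R^E)$ using compatibility of $\nabla$ for $\langle\cdot,\cdot\rangle_R^E$; collecting the four resulting terms repackages them into $\langle(\nabla\otimes_{\EE}\nabla')_X(f\otimes\xi),g\otimes\eta\rangle^{\otimes}+\langle f\otimes\xi,(\nabla\otimes_{\EE}\nabla')_X(g\otimes\eta)\rangle^{\otimes}$.

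For the curvature, writing $D_X:=(\nabla\otimes_{\EE}\nabla')_X$ for brevity, I would compute $D_XD_Y(f\otimes\xi)$ directly: each second-order application yields the diagonal terms $\nabla_X\nabla_Y f\otimes\xi$ and $f\otimes\nabla'_X\nabla'_Y\xi$ together with the mixed terms $\nabla_Y f\otimes\nabla'_X\xi$ and $\nabla_X f\otimes\nabla'_Y\xi$. In $\Theta_{\nabla\otimes_{\EE}\nabla'}(X,Y)=D_XD_Y-D_YD_X-D_{[X,Y]}$ the mixed terms are symmetric in $X,Y$ and cancel, while the diagonal terms together with $D_{[X,Y]}(f\otimes\xi)=\nabla_{[X,Y]}f\otimes\xi+f\otimes\nabla'_{[X,Y]}\xi$ assemble into $\Theta_\nabla(X,Y)f\otimes\xi+f\otimes\Theta_{\nabla'}(X,Y)\xi$, which is exactly $\bigl(\Theta_\nabla(X,Y)\otimes_{\EE}I_\Xi+I_{P\EE}\otimes_{\EE}\Theta_{\nabla'}(X,Y)\bigr)(f\otimes\xi)$. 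Here $\Theta_\nabla(X,Y)\in P\EE P$ acts on the left factor and commutes with the $\EE$-balancing, so both summands descend to $P\EE\otimes_{\EE}\Xi$; the cancellation of the cross terms, which uses only that the two connections act on disjoint tensor factors, is the mechanism behind the additivity of the curvature.
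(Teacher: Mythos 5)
Your overall architecture --- descent to the balanced tensor product, then the Leibniz rule for $\delta$, compatibility with $\langle\cdot,\cdot\rangle^{\otimes}$, and the curvature computation via cancellation of the mixed terms --- is sound, and the last three verifications are correct as you sketch them. It is worth noting that the paper's entire proof of this proposition is the single sentence ``The result follows by Lemma~5.1 of \cite{CR}''; what you have written is, in effect, a proof of that lemma in the present setting, so your level of detail exceeds the paper's.

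There is, however, a genuine gap at the step you yourself single out as the crux. You dispose of the balancing defect $\widehat\delta_X(\Psi)\cdot\xi+\Psi\cdot\nabla'_X(\xi)-\nabla'_X(\Psi\cdot\xi)$ by asserting that ``$\widehat\delta$ is the covariant derivative on $\EE$ induced by $\nabla'$.'' That is not what the paper's definitions say: $\widehat\delta$ is defined once and for all by $\widehat\delta_X(T)=[\nabla^0_X,T]$, where $\nabla^0$ is the \emph{fixed} connection of \eqref{eq:min_conn}, whereas the $\nabla'$ in the proposition is an \emph{arbitrary} compatible connection on $\Xi$. Any such $\nabla'$ has the form $\nabla'=\nabla^0+\mathbb{H}$ with each $\mathbb{H}_X$ a skew-adjoint element of $\EE$ acting by left multiplication, so $[\nabla'_X,\Psi]=\widehat\delta_X(\Psi)+[\mathbb{H}_X,\Psi]$, and your balancing defect equals $-f\otimes[\mathbb{H}_X,\Psi]\cdot\xi$. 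Since $\EE$ is noncommutative, this does not vanish unless each $\mathbb{H}_X$ is central in $\EE$; for a truly arbitrary $\nabla'$ the operator simply does not descend to $P\EE\otimes_{\EE}\Xi$. The same unproved identity is used a second time in your compatibility argument, when you expand $\nabla'_X(\langle g,f\rangle_R^E\cdot\eta)$ by the ``induced-derivation identity.'' The repair is to state as an explicit hypothesis that $\nabla'$ induces $\widehat\delta$, i.e. $[\nabla'_X,\Psi]=\widehat\delta_X(\Psi)$ for all $\Psi\in\EE$ and $X\in\mathfrak{h}$ (equivalently, $\nabla'-\nabla^0$ takes values in the center of $\EE$; in particular $\nabla'=\nabla^0$ qualifies). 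This linking condition is precisely the hypothesis of Lemma~5.1 of \cite{CR}, and it is how the proposition must be read for the citation --- and your proof --- to go through; with it made explicit, your argument is complete.
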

\begin{proof}
The result follows by Lemma~5.1 of \cite{CR}. 
\end{proof}

\begin{thm}\label{thm:tensor-prod}
Let $\nabla$ be a compatible connection for $\widehat{\delta}$ with respect to $\langle \cdot,\cdot\rangle_R^E$ and $\nabla'$ be a compatible connection for $\delta$ with respect to $\langle \cdot,\cdot \rangle_R^D$.
Suppose $\nabla\otimes_{\EE} \nabla'$ is a compatible connection on the balanced tensor prodcut $P\EE\otimes_{\EE} \Xi$ with respect to $\langle \cdot, \cdot \rangle^{\otimes}$ with the curvature $\Theta_{\nabla\otimes_{\EE} \nabla'}$ given in Proposition~\ref{prop:tensor_conn}.
Then the connection $\nabla\otimes_{\EE}\nabla'$ is a critical point  of $\YM$ in $CC(P\EE\otimes_{\EE} \Xi)$ if and only if $\nabla$ and $\nabla'$ are critical points of $\YM$ in $CC(P\EE)$ and $CC(\Xi)$, respectively.
\end{thm}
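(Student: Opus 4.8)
The plan is to test the tensor product connection $\nabla\otimes_{\EE}\nabla'$ against the three Euler--Lagrange equations \eqref{eq:critical_point} and show that each of them separates into the corresponding equation for $\nabla$ on $P\EE$ and for $\nabla'$ on $\Xi$. Throughout I write $\widetilde{\nabla}=\nabla\otimes_{\EE}\nabla'$, so that $\widetilde{\nabla}_W(f\otimes\xi)=\nabla_Wf\otimes\xi+f\otimes\nabla'_W\xi$ on elementary tensors, and I use the curvature formula of Proposition~\ref{prop:tensor_conn},
\[
\Theta_{\widetilde{\nabla}}(U,V)=\Theta_\nabla(U,V)\otimes I_\Xi+I_{P\EE}\otimes\Theta_{\nabla'}(U,V),
\]
for $U,V\in\mathfrak{h}$.

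The key step is a pair of commutation identities, which I would establish by a direct computation on an elementary tensor $f\otimes\xi$:
\[
[\widetilde{\nabla}_W,\Theta_\nabla(U,V)\otimes I_\Xi]=[\nabla_W,\Theta_\nabla(U,V)]\otimes I_\Xi,\qquad [\widetilde{\nabla}_W,I_{P\EE}\otimes\Theta_{\nabla'}(U,V)]=I_{P\EE}\otimes[\nabla'_W,\Theta_{\nabla'}(U,V)].
\]
Both hold because $\Theta_\nabla(U,V)$ is a right-$\EE$-module endomorphism of $P\EE$ while $\Theta_{\nabla'}(U,V)$ is a right-$\DD$-module endomorphism of $\Xi$: when one commutes $\widetilde{\nabla}_W$ past a curvature supported on a single tensor factor, the Leibniz term acting on the other factor appears identically on both sides and cancels. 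Feeding these identities, together with the curvature formula and the analogous splitting of the $c\,\Theta(X,Y)$-term, into \eqref{eq:critical_point} shows that the $j$-th equation for $\widetilde{\nabla}$ reads
\[
\mathcal{A}_j\otimes I_\Xi+I_{P\EE}\otimes\mathcal{B}_j=0,
\]
where $\mathcal{A}_j$ is exactly the left-hand side of the $j$-th equation of \eqref{eq:critical_point} for $\nabla$ on $P\EE$ (valued in $\operatorname{End}_\EE(P\EE)=P\EE P$), and $\mathcal{B}_j$ the left-hand side of the $j$-th equation for $\nabla'$ on $\Xi$ (valued in $\operatorname{End}_\DD(\Xi)=\EE$).

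With this reduction the converse direction is immediate: if $\nabla$ and $\nabla'$ are critical points then every $\mathcal{A}_j$ and every $\mathcal{B}_j$ vanishes, so $\widetilde{\nabla}$ satisfies \eqref{eq:critical_point}. For the forward direction I would argue that each identity $\mathcal{A}_j\otimes I+I\otimes\mathcal{B}_j=0$ forces $\mathcal{A}_j=0$ and $\mathcal{B}_j=0$ separately. Evaluating on $f\otimes\xi$ and moving the $\EE$-action $\mathcal{B}_j$ through the balanced tensor product gives $(\mathcal{A}_jf+f\,b_j)\otimes\xi=0$ for all $f\in P\EE,\ \xi\in\Xi$, where $b_j\in\EE$ implements $\mathcal{B}_j$; fullness of the equivalence bimodule $\Xi$ then yields $\mathcal{A}_jf+f\,b_j=0$ on $P\EE$, and one uses the skew-adjointness of the Euler--Lagrange expressions (e.g. by specializing $f=P$ and pairing against the induced traces) to peel the two summands apart.

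The main obstacle is precisely this last decoupling. Because the tensor product is balanced over $\EE$, the summands $\mathcal{A}_j\otimes I$ and $I\otimes\mathcal{B}_j$ do not a priori occupy independent positions, and one must genuinely exploit that $P\EE$ is a $P\EE P$--$\EE$ equivalence bimodule (fullness and faithfulness of the induced traces) together with the skew-symmetry of $\mathcal{A}_j$ and $b_j$ to conclude that each vanishes individually. An alternative route that sidesteps the operator decoupling is variational: the variations $\mathbb{F}\otimes I$ coming from $\nabla$ and $I\otimes\mathbb{G}$ coming from $\nabla'$ together span all admissible variations of $\widetilde{\nabla}$, and one shows that the first variation of $\YM$ splits, up to the positive coupling constants computed in Section~\ref{sec:trace_2nu}, into the first variations of $\YM$ on $P\EE$ and on $\Xi$; establishing the vanishing of the surviving cross term is then the crux of that approach.
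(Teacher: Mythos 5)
Your reduction is precisely the paper's proof: the same commutation identities (the diagonal commutators split, the cross commutators vanish when computed on elementary tensors), the same splitting of each equation of \eqref{eq:critical_point} for $\nabla\otimes_{\EE}\nabla'$ into $\mathcal{A}_j\otimes_{\EE} I_\Xi + I_{P\EE}\otimes_{\EE}\mathcal{B}_j=0$, and the same immediate ``if'' direction. Where you differ is that you flag the ``only if'' direction --- deducing $\mathcal{A}_j=0$ and $\mathcal{B}_j=0$ separately --- as the main obstacle, whereas the paper asserts this equivalence outright, with no argument. Your caution is warranted: the assertion is false for arbitrary skew-adjoint pairs. Since the tensor product is balanced over $\EE$, the operator $I_{P\EE}\otimes\mathcal{B}_j$ (left multiplication on $\Xi$ by $b_j\in\EE$) acts on $f\otimes\xi$ as $fb_j\otimes\xi$, so the pair $\mathcal{A}=-\lambda P$, $\mathcal{B}=\lambda \Id_E$ (with $\lambda$ purely imaginary and nonzero) satisfies $\mathcal{A}\otimes I_\Xi+I_{P\EE}\otimes\mathcal{B}=0$ although neither summand vanishes. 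So you have identified a genuine lacuna in the paper's own proof, not an artifact of your approach.

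However, your sketch does not yet close that gap, and the missing ingredients can be named. Fullness of $\Xi$ and your specialization $f=P$ correctly give $\mathcal{A}_j=-Pb_j$ and then $P[b_j,e]=0$ for all $e\in\EE$; but skew-adjointness and trace pairings alone cannot ``peel the summands apart,'' because the scalar pair above passes every such test except one. What finishes the argument is: (i) traciality and faithfulness of $\tau_E$ applied to $P[b_j,e]=0$ yield $Pb_j=b_jP$, hence $b_j$ commutes with $P\EE$, and (taking adjoints, using $b_j^\ast=-b_j$) with $\EE P$, hence with the closed two-sided ideal generated by $P$; when that ideal is all of $\EE$ --- e.g.\ when $\EE$ is simple --- $b_j$ is central and therefore a scalar multiple of $\Id_E$; (ii) only now invoke that $\mathcal{A}_j$ and $\mathcal{B}_j$ are Euler--Lagrange expressions, i.e.\ sums of commutators $[\nabla_W,T]$, which are annihilated by the induced traces by Lemma~2.2 of \cite{CR} (the same fact the paper uses in Theorem~\ref{prop:min-cp}); since $\tau_E(\lambda \Id_E)=2\mu\lambda\ne 0$ for $\lambda\ne 0$, this forces $b_j=0$ and hence $\mathcal{A}_j=-Pb_j=0$. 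Note that this step genuinely uses both the special form of $\mathcal{A}_j,\mathcal{B}_j$ and structural input on $\EE$; your alternative variational route runs into exactly the same cross term, so it is not an escape hatch. In summary: your proposal reproduces the paper's argument, correctly exposes the one step the paper glosses over, but still needs the centrality-plus-trace argument (or a substitute) to be a complete proof of the forward implication.
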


\begin{proof}
Suppose first that $\nabla\otimes_{\EE}\nabla'$ is a critical point of $\YM$. Then $\nabla\otimes_{\EE}\nabla'$ satisfies
\begin{equation}\label{eq:ten_critical_1}
[\nabla_Y\otimes_{\EE}\nabla'_Y, \Theta_{\nabla\otimes_{\EE} \nabla'}(X,Y)]+[\nabla_Z\otimes_{\EE} \nabla'_Z, \Theta_{\nabla\otimes_{\EE} \nabla'}(X,Z)]=0.
\end{equation}

\begin{equation}\label{eq:ten_critical_2}
[\nabla_X\otimes_{\EE}\nabla'_X, \Theta_{\nabla\otimes_{\EE} \nabla'}(Y,X)]+[\nabla_Z\otimes_{\EE}\nabla'_Z, \Theta_{\nabla\otimes_{\EE} \nabla'}(Y,Z)]=0.
\end{equation}

\begin{equation}\label{eq:ten_critical_3}
[\nabla_X\otimes_{\EE}\nabla'_X, \Theta_{\nabla\otimes_{\EE} \nabla'}(Z,X)]+[\nabla_Y\otimes_{\EE}\nabla'_Y, \Theta_{\nabla\otimes_{\EE} \nabla'}(Z,Y)]-c\Theta_{\nabla\otimes_{\EE} \nabla'}(X,Y)=0.
\end{equation}

Observe that
\[
[\nabla_Y\otimes_{\EE} I_{\Xi}, \Theta_{\nabla}(X,Y)\otimes_{\EE} I_\Xi]=[\nabla_Y, \Theta_{\nabla}(X,Y)]\otimes_{\EE} I_\Xi,
\]
and
\[\begin{split}
&[\nabla_Y\otimes_{\EE} I_\Xi, I_{P\EE}\otimes_{\EE} \Theta_{\nabla'}(X,Y)] \\ 
&=(\nabla_Y\otimes_{\EE} I_{\Xi})(I_{P\EE} \otimes_{\EE}\Theta_{\nabla'}(X,Y))- (I_{P\EE} \otimes_{\EE}\Theta_{\nabla'}(X,Y))(\nabla_Y\otimes_{\EE} I_{\Xi})   \\
&=\nabla_Y\otimes_{\EE} \Theta_{\nabla'}(X,Y)-\nabla_Y\otimes_{\EE}\Theta_{\nabla'}(X,Y)=0.\\
\end{split}\]

Similarly, we have
\[
[I_{P\EE}\otimes \nabla'_Y, \Theta_\nabla(X,Y)\otimes_{\EE} I_\Xi ]=0.
\]

So, the first bracket of \eqref{eq:ten_critical_1} can be computed as
\[\begin{split}
&[\nabla_Y\otimes_{\EE} {\nabla'}_Y, \Theta_{\nabla\otimes_{\EE}\nabla'}(X,Y)]\\
&=[\nabla_Y\otimes I_{\Xi}+I_{P\EE}\otimes \nabla'_Y, \Theta_\nabla(X,Y)\otimes_{\EE} I_\Xi + I_{P\EE}\otimes_{\EE} \Theta_{\nabla'} (X,Y)] \\
&=[\nabla_Y\otimes I_{\Xi}, \Theta_\nabla(X,Y)\otimes_{\EE} I_\Xi ] + [\nabla_Y\otimes I_{\Xi}, I_{P\EE}\otimes_{\EE} \Theta_{\nabla'} (X,Y)] \\
& \quad \quad \quad + [I_{P\EE}\otimes \nabla'_Y, \Theta_\nabla(X,Y)\otimes_{\EE} I_\Xi ] + [I_{P\EE}\otimes \nabla'_Y, I_{P\EE}\otimes_{\EE} \Theta_{\nabla'} (X,Y)] \\
\end{split}\]

\[\begin{split}
&= [\nabla_Y, \Theta_{\nabla}(X,Y)]\otimes_{\EE} I_\Xi + 0 + 0 + I_{P\EE} \otimes_{\EE} [\nabla'_Y, \Theta_{\nabla'}(X,Y)]
\end{split}\]
Similarly, the second bracket of \eqref{eq:ten_critical_1} can be computed as
\[\begin{split}
&[\nabla_Z\otimes_{\EE} \nabla'_Z, \Theta_{\nabla\otimes_{\EE} \nabla'}(X,Z)]\\
&=[\nabla_Z, \Theta_\nabla(X,Z)]\otimes_{\EE} I_{\Xi} + I_{P\EE} \otimes_{\EE} [\nabla'_Z, \Theta_{\nabla'}(X,Z)]
\end{split}\]

Therefore, the left hand side of \eqref{eq:ten_critical_1} is given by
\[\begin{split}
&[\nabla_Y\otimes_{\EE} {\nabla'}_Y, \Theta_{\nabla\otimes_{\EE}\nabla'}(X,Y)] +[\nabla_Z\otimes_{\EE} \nabla'_Z, \Theta_{\nabla\otimes_{\EE} \nabla'}(X,Z)]\\
&=([\nabla_Y, \Theta_{\nabla}(X,Y)]+[\nabla_Z, \Theta_{\nabla}(X,Z)])\otimes_{\EE} I_\Xi +I_{P\EE}\otimes_{\EE} ([\nabla'_Y, \Theta_{\nabla'}(X,Y)]+[\nabla'_Z, \Theta_{\nabla'}(X,Z)])
\end{split}\]
Hence equation \eqref{eq:ten_critical_1} holds if and only if the following two equations hold:
\[
[\nabla_Y, \Theta_{\nabla}(X,Y)]+[\nabla_Z, \Theta_{\nabla}(X,Z)]=0\quad\text{and}
\]
\[
[\nabla'_Y, \Theta_{\nabla'}(X,Y)]+[\nabla'_Z, \Theta_{\nabla'}(X,Z)]=0.
\]
Similarly, one can show that \eqref{eq:ten_critical_2} and \eqref{eq:ten_critical_3} are equivalent to the rest of the critical point conditions for $\nabla$ and $\nabla'$ respectively.

Therefore $\nabla\otimes_{\EE} \nabla'$ is a critical point of $\YM$ in $CC(P\EE\otimes_{\EE} \Xi)$ if and only if $\nabla$ and $\nabla'$ are critical points for $\YM$ in $CC(P\EE)$ and $CC(\Xi)$ respectively.

\end{proof}

Unfortunately we have not yet managed to determine conditions under which a balanced tensor product connection minimizes its Yang-Mills functional. Since this involves cross-terms between the two components of the balanced tensor product connection, it is not necessarily sufficient that both of these two component connections be Yang-Mills connections.

\begin{appendices}
\section{Curvature computations}\label{app:sec:cc}
 In this section, we verify that the connection $\nabla^0$ given in \eqref{eq:min_conn} is indeed a correct formula for a compatible connection with the constant curvature $\Theta_{\nabla^0}$ given in \eqref{eq:min_curv} in our setting.

\begin{prop}\label{App:prop:cc}
Let $\Xi$ and $\langle \cdot, \cdot \rangle_R^D$ be given in Section~\ref{sec:prelim}. Let $\{X,Y,Z\}$ be the basis of the Heisenberg Lie algebra $\mathfrak{h}$ with $[X,Y]=cZ$.
For $\xi\in \Xi$, let  $\nabla^0:\Xi \to \Xi \otimes \mathfrak{h}^\ast$  be a linear map given by
\begin{equation}\label{App:eq:min_conn}
\begin{split}
&(\nabla^0_X\xi)(x,y)=-\frac{\partial \xi}{\partial y}(x,y)+\frac{\pi ci}{2\mu}x^2f(x,y)\\
&(\nabla^0_Y\xi)(x,y)=-\frac{\partial \xi}{\partial x}(x,y)\\
&(\nabla^0_Z\xi)(x,y)=\frac{\pi i x}{\mu}\xi(x,y).
\end{split}
\end{equation}
Then $\nabla^0$ is a compatible linear connection on $\Xi$ with respect to the $\DD$-valued inner product $\langle \cdot, \cdot \rangle_R^D$.
\end{prop}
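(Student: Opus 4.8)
The plan is to verify directly the two defining conditions — the connection (Leibniz) identity \eqref{eq:nabla-der} and the compatibility identity \eqref{eq:nabla-comp} — for each of the three basis directions $X$, $Y$, $Z$, using the explicit formulas for the right $\DD$-action on $\Xi$, the $\DD$-valued inner product $\langle \cdot, \cdot \rangle_R^D$, and the derivations $\delta_X, \delta_Y, \delta_Z$ recorded in Section~\ref{sec:prelim}. Since these are all concrete formulas on $C_c(\RR\times\TT)$, each verification reduces to expanding both sides as sums over $q$ (resp.\ $k$) and matching the summands term by term. The present $\nabla^0$ is the special case of $\nabla^1$ treated in the proof of Theorem~\ref{thm:const_conn2}, obtained by deleting the extra linear term $-\nu i x+\mu i y$ from $\nabla^1_X$, so I expect no new difficulties beyond bookkeeping.

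First I would dispose of the two easy directions. For $\nabla^0_Y=-\partial/\partial x$, the Leibniz rule follows from the ordinary product rule once one differentiates the sum $\sum_q \xi(x+2q\mu,y+2q\nu)\overline{\Phi}(x+2q\mu,y+2q\nu,q)$ in $x$ and recognizes $-\partial_x\overline{\Phi}$ as $\overline{\delta_Y(\Phi)}$; compatibility follows because the phase $\overline{e}(ckp(y-p\nu))$ in $\langle f,g\rangle_R^D$ is independent of $x$, so $\delta_Y=-\partial/\partial x$ hits only the two factors $f$ and $\overline{g}$. For $\nabla^0_Z$, which is multiplication by $\tfrac{\pi i x}{\mu}$, the key point is the arithmetic identity $\tfrac{\pi i(x+2q\mu)}{\mu}-2\pi i q=\tfrac{\pi i x}{\mu}$, matching the shifted multiplier coming from $(\nabla^0_Z\xi)\cdot\Phi$ against the factor $\overline{\delta_Z(\Phi)}=-2\pi i q\,\overline{\Phi}$ coming from $\xi\cdot\delta_Z(\Phi)$; compatibility for $Z$ is the analogous cancellation against $\delta_Z(\langle f,g\rangle_R^D)=2\pi i p\,\langle f,g\rangle_R^D$.

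The substantive case is the $X$-direction, and this is where I expect the only real bookkeeping obstacle. For the Leibniz identity, the potential $\tfrac{\pi c i}{2\mu}x^2$ in $\nabla^0_X$ appears on the right-action side as $\tfrac{\pi c i}{2\mu}(x+2q\mu)^2$, while $\delta_X$ contributes the twisting factor $\overline{2\pi i c q(x+2q\mu-q\mu)}=-2\pi i c q(x+q\mu)$; the crucial cancellation is the quadratic identity
\[
\frac{\pi c i}{2\mu}(x+2q\mu)^2-2\pi i c q(x+q\mu)=\frac{\pi c i}{2\mu}x^2,
\]
which is exactly the reflection of the Heisenberg cocycle built into $\rho$ and $\delta_X$. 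For compatibility, the one subtlety is that the multiplication part $2\pi i cp(x-p\mu)$ of $\delta_X$ must be combined with the $y$-derivative of the phase $\overline{e}(ckp(y-p\nu))$, which contributes $+2\pi i ckp$, giving the coefficient $2\pi i c p(x-p\mu+k)$; one then checks that the $X$-potential evaluated at the shifted arguments $x+k$ and $x-2p\mu+k$ differs by precisely $\tfrac{\pi c i}{2\mu}\big[(x+k)^2-(x-2p\mu+k)^2\big]=2\pi i c p(x+k-p\mu)$. Matching these summands completes the verification that $\nabla^0\in CC(\Xi)$.
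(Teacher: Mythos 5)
Your proposal is correct and follows essentially the same route as the paper's own proof: a direct term-by-term verification of the Leibniz identity \eqref{eq:nabla-der} and the compatibility identity \eqref{eq:nabla-comp} in each of the directions $X$, $Y$, $Z$, hinging on exactly the cancellations you isolate --- the quadratic identity $\tfrac{\pi ci}{2\mu}(x+2q\mu)^2-2\pi icq(x+q\mu)=\tfrac{\pi ci}{2\mu}x^2$ for the $X$-Leibniz rule, the matching of $\tfrac{\pi ci}{2\mu}\bigl[(x+k)^2-(x-2p\mu+k)^2\bigr]=2\pi icp(x+k-p\mu)$ against the phase-derivative contribution for $X$-compatibility, and the linear identity $\tfrac{\pi i(x+2q\mu)}{\mu}-2\pi iq=\tfrac{\pi ix}{\mu}$ for $Z$. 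Your coefficient $2\pi icp(x+k-p\mu)$ is in fact the correct one (the appendix's displayed $2\pi icp(x-2p\mu+k)$ is a harmless typo appearing identically on both sides), so nothing is missing.
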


\begin{proof}
We have to show that $\nabla^0$ satisfies \eqref{eq:nabla-der} and \eqref{eq:nabla-comp}.
So fix $f,g\in \Xi$, $\Phi\in \DD$, then compute
\[\begin{split}
&(\nabla^0_X(f)\cdot \Phi)(x,y)+(f\cdot \delta_X(\Phi))(x,y)\\
&=\sum_q \nabla^0_X(f)(x+2q\mu,y+2q\nu)\overline{\Phi}(x+2q\mu,y+2q\nu,q)\\
&\quad +\sum_q f(x+2q\mu,y+2q\nu)\overline{\delta_X(\Phi)}(x+2q\mu,y+2q\nu,q)\\
&=\sum_q \Big(-\frac{\partial f}{\partial y}(x+2q\mu,y+2q\nu)+\frac{\pi ci}{2\mu}(x+2q\mu)^2 f(x+2q\mu,y+2q\nu)\Big)\overline{\Phi}(x+2q\mu,y+2q\nu,q)\\
\end{split}\]
\[\begin{split}
&\quad +\sum_q f(x+2q\mu,y+2q\nu)\Big(\overline{2\pi icq(x+2q\mu-q\mu)\Phi(x+2q\mu,y+2q\nu,q)}-\frac{\partial \overline{\Phi}}{\partial y}(x+2q\mu,y+2q\nu,q)\Big)\\
&=-\sum_q \frac{\partial f}{\partial y}(x+2q\mu,y+2q\nu)\overline{\Phi}(x+2q\mu,y+2q\nu,q) -\sum_q \Big(f(x+2q\mu,y+2q\nu)\\
&\quad \times \frac{\partial \overline{\Phi}}{\partial y}(x+2q\mu,y+2q\nu,q)\Big) + \sum_q \Big(f(x+2q\mu,y+2q\nu)\overline{\Phi}(x+2q\mu,y+2q\nu,q)\\
&\quad \times \big(\frac{\pi ci}{2\mu}(x+2q\mu)^2-2\pi icq(x+q\mu)\big)\Big)\\
&=-\sum_q \frac{\partial f}{\partial y}(x+2q\mu,y+2q\nu)\overline{\Phi}(x+2q\mu,y+2q\nu,q) -\sum_q \Big(f(x+2q\mu,y+2q\nu)\\
&\quad \times \frac{\partial \overline{\Phi}}{\partial y}(x+2q\mu,y+2q\nu,q)\Big) + \frac{\pi ci}{2\mu}x^2 \sum_q f(x+2q\mu,y+2q\nu)\overline{\Phi}(x+2q\mu,y+2q\nu,q).
\end{split}\]
On the other hand,
\[\begin{split}
&\nabla^0_X(f\cdot \Phi)(x,y)=-\frac{\partial}{\partial y}(f\cdot \Phi)(x,y)+\frac{\pi ci}{2\mu}x^2(f\cdot \Phi)(x,y)\\
&=-\frac{\partial}{\partial y}\Big(\sum_q f(x+2q\mu,y+2q\nu)\overline{\Phi}(x+2q\mu,y+2q\nu,q)\Big) \\
&\quad\quad + \frac{\pi ci}{2\mu}x^2\sum_q f(x+2q\mu,y+2q\nu)\overline{\Phi}(x+2q\mu,y+2q\nu,q)\\
&=-\sum_q \frac{\partial f}{\partial y}(x+2q\mu,y+2q\nu)\overline{\Phi}(x+2q\mu,y+2q\nu,q) -\sum_q \Big(f(x+2q\mu,y+2q\nu)\\
&\quad \times \frac{\partial \overline{\Phi}}{\partial y}(x+2q\mu,y+2q\nu,q)\Big) + \frac{\pi ci}{2\mu}x^2 \sum_q f(x+2q\mu,y+2q\nu)\overline{\Phi}(x+2q\mu,y+2q\nu,q)
\end{split}\]
Thus $\nabla^0_X(f\cdot \Phi)(x,y)=(\nabla^0_X(f)\cdot \Phi)(x,y)+(f\cdot \delta_X(\Phi))(x,y)$.

For $\nabla^0_Y$, we compute
\[\begin{split}
&(\nabla^0_Y(f)\cdot \Phi)(x,y)+(f\cdot \delta_Y(\Phi))(x,y)\\
&=\sum_q \nabla^0_Y(f)(x+2q\mu,y+2q\nu)\overline{\Phi}(x+2q\mu,y+2q\nu,q)\\
&\quad +\sum_q f(x+2q\mu,y+2q\nu)\overline{\delta_Y(\Phi)}(x+2q\mu,y+2q\nu,q)\\
\end{split}\]
\[\begin{split}
&=-\sum_q \frac{\partial f}{\partial x}(x+2q\mu, y+2q\nu)\overline{\Phi}(x+2q\mu, y+2q\nu,q)\\
&\quad -\sum_q f(x+2q\mu, y+2q\nu)\frac{\partial \overline{\Phi}}{\partial x}(x+2q\mu, y+2q\nu,q)\\
&=\nabla^0_Y(f\cdot \Phi)(x,y).
\end{split}\]

For $\nabla^0_Z$, we compute
\[\begin{split}
&(\nabla^0_Z(f)\cdot \Phi)(x,y)+(f\cdot \delta_Z(\Phi))(x,y)\\
&=\sum_q \nabla_Z(f)(x+2q\mu,y+2q\nu)\overline{\Phi}(x+2q\mu,y+2q\nu,q)\\
&\quad +\sum_q f(x+2q\mu,y+2q\nu)\overline{\delta_Z(\Phi)}(x+2q\mu,y+2q\nu,q)\\
&=\sum_q \frac{\pi i(x+2q\mu)}{\mu}f(x+2q\mu,y+2q\nu)\overline{\Phi}(x+2q\mu,y+2q\nu,q)\\
&\quad +\sum_q f(x+2q\mu,y+2q\nu)(\overline{2\pi i q}\,\overline{\Phi}(x+2q\mu,y+2q\nu,q))\\
&=\frac{\pi ix}{\mu}(f\cdot \Phi)(x,y)=\nabla^0_Z(f\cdot \Phi)(x,y).
\end{split}\]
Thus $\nabla^0$ is a connection on $\Xi$.

To show that $\nabla^0$ is compatible with respect to $\langle \cdot, \cdot\rangle_R^D$, we compute
\[\begin{split}
&\langle \nabla^0_X(f), g\rangle_R^D(x,y,p)+\langle f,\nabla^0_X(g)\rangle_R^D(x,y,p)\\
&=\sum_k \overline{e}(ckp(y-p\nu))\nabla^0_X(f)(x+k,y)\overline{g}(x-2p\mu+k,y-2p\nu)\\
&\quad + \sum_k \overline{e}(ckp(y-p\nu)) f(x+k,y)\overline{\nabla^0_X(g)}(x-2p\mu+k,y-2p\nu)\\
&=\sum_k \overline{e}(ckp(y-p\nu))\Big(-\frac{\partial f}{\partial y}(x+k,y)+\frac{\pi ci}{2\mu}(x+k)^2 f(x+k,y)\Big)\overline{g}(x-2p\mu+k,y-2p\nu)\\
&\quad + \sum_k \overline{e}(ckp(y-p\nu))f(x+k,y)\Big(-\frac{\partial \overline{g}}{\partial y}(x-2p\mu+k,y-2p\nu)\\
&\quad +\overline{\frac{\pi ci}{2\mu}}(x-2p\mu+k)^2\overline{g}(x-2p\mu+k,y-2p\nu)\Big)\\
&=-\sum_k \overline{e}(ckp(y-p\nu))\frac{\partial f}{\partial y}(x+k,y)\overline{g}(x-2p\mu+k,y-2p\nu)\\
&\quad - \sum_k \overline{e}(ckp(y-p\nu)) f(x+k,y)\frac{\partial \overline{g}}{\partial y}(x-2p\mu+k,y-2p\nu)\\
&\quad + \sum_k 2\pi icp (x-2p\mu+k)\overline{e}(ckp(y-p\nu)) f(x+k,y)\overline{g}(x-2p\mu+k,y-2p\nu).
\end{split}\]
On the other hand,
\[\begin{split}
&\delta_X(\langle f, g\rangle_R^D)(x,y,p)\\
&=2\pi icp(x-p\mu)\langle f, g\rangle_R^D(x,y,p)-\frac{\partial}{\partial y}\big(\langle f, g\rangle_R^D\big)(x,y,p)\\
\end{split}\]
\[\begin{split}
&=2\pi cip(x-p\mu)\langle f, g\rangle_R^D(x,y,p)-\frac{\partial}{\partial y}\big(\sum_k \overline{e}(ckp(y-p\nu))f(x+k,y)\overline{g}(x-2p\mu+k,y-2p\nu)\Big)\\
&=2\pi cip(x-p\mu) \sum_k \overline{e}(ckp(y-p\nu))f(x+k,y)\overline{g}(x-2p\mu+k,y-2p\nu)\\
&\quad-\sum_k (-2\pi ickp)\overline{e}(ckp(y-p\nu))f(x+k,y)\overline{g}(x-2p\mu+k,y-2p\nu)\\
&\quad -\sum_k \overline{e}(ckp(y-p\nu))\frac{\partial f}{\partial y}(x+k,y)\overline{g}(x-2p\mu+k,y-2p\nu)\\
&\quad - \sum_k \overline{e}(ckp(y-p\nu)) f(x+k,y)\frac{\partial \overline{g}}{\partial y}(x-2p\mu+k,y-2p\nu)\\
&=\sum_k 2\pi icp (x-2p\mu+k)\overline{e}(ckp(y-p\nu)) f(x+k,y)\overline{g}(x-2p\mu+k,y-2p\nu).\\
&\quad -\sum_k \overline{e}(ckp(y-p\nu))\frac{\partial f}{\partial y}(x+k,y)\overline{g}(x-2p\mu+k,y-2p\nu)\\
&\quad - \sum_k \overline{e}(ckp(y-p\nu)) f(x+k,y)\frac{\partial \overline{g}}{\partial y}(x-2p\mu+k,y-2p\nu).\\
\end{split}\]
Thus $\langle \nabla^0_X(f), g\rangle_R^D(x,y,p)+\langle f,\nabla^0_X(g)\rangle_R^D(x,y,p)=\delta_X(\langle f, g\rangle_R^D)(x,y,p)$.

For $\nabla^0_Y$, we compute
\[\begin{split}
&\langle \nabla^0_Y(f), g\rangle_R^D(x,y,p)+\langle f,\nabla^0_Y(g)\rangle_R^D(x,y,p)\\
&=\sum_k \overline{e}(ckp(y-p\nu))\nabla^0_Y(f)(x+k,y)\overline{g}(x-2p\mu+k,y-2p\nu)\\
&\quad + \sum_k \overline{e}(ckp(y-p\nu)) f(x+k,y)\overline{\nabla^0_Y(g)}(x-2p\mu+k,y-2p\nu)\\
&=-\sum_k \overline{e}(ckp(y-p\nu))\frac{\partial f}{\partial x}(x+k,y)\overline{g}(x-2p\mu+k,y-2p\nu)\\
&\quad -\sum_k \overline{e}(ckp(y-p\nu)) f(x+k,y)\frac{\partial \overline{g}}{\partial x}(x-2p\mu+k,y-2p\nu).\\
\end{split}\]
On the other hand,
\[\begin{split}
&\delta_Y(\langle f, g\rangle_R^D)(x,y,p)=-\frac{\partial}{\partial x}(\langle f, g\rangle_R^D)(x,y,p)\\
&=-\frac{\partial}{\partial x}\Big(\sum_k \overline{e}(ckp(y-p\nu)) f(x+k,y)\overline{g}(x-2p\mu+k,y-2p\nu)\Big)\\
&=-\sum_k \overline{e}(ckp(y-p\nu))\frac{\partial f}{\partial x}(x+k,y)\overline{g}(x-2p\mu+k,y-2p\nu)\\
&\quad -\sum_k \overline{e}(ckp(y-p\nu)) f(x+k,y)\frac{\partial \overline{g}}{\partial x}(x-2p\mu+k,y-2p\nu).\\
\end{split}\]
Thus $\langle \nabla^0_Y(f), g\rangle_R^D(x,y,p)+\langle f,\nabla^0_Y(g)\rangle_R^D(x,y,p)=\delta_Y(\langle f, g\rangle_R^D)(x,y,p)$.

For $\nabla^0_Z$, we compute
\[\begin{split}
&\langle \nabla^0_Z(f), g\rangle_R^D(x,y,p)+\langle f,\nabla^0_Z(g)\rangle_R^D(x,y,p)\\
&=\sum_k \overline{e}(ckp(y-p\nu))\nabla^0_Z(f)(x+k,y)\overline{g}(x-2p\mu+k,y-2p\nu)\\
&\quad\quad\quad + \sum_k \overline{e}(ckp(y-p\nu)) f(x+k,y)\overline{\nabla^0_Z(g)}(x-2p\mu+k,y-2p\nu)\\
\end{split}\]
\[\begin{split}
&=\sum_k \overline{e}(ckp(y-p\nu))\Big(\frac{\pi i(x+k)}{\mu}f(x+k,y)\Big)\overline{g}(x-2p\mu+k,y-2p\nu)\\
&\quad +\sum_k \overline{e}(ckp(y-p\nu)) f(x+k,y)\Big(\frac{\overline{\pi i(x-2p\mu+k)}}{\mu}\overline{g}(x-2p\mu+k,y-2p\nu)\\
&=2\pi ip \Big(\sum_k \overline{e}(ckp(y-p\nu)) f(x+k,y)\overline{g}(x-2p\mu+k,y-2p\nu)\Big)\\
&=2 \pi ip \langle f, g\rangle_R^D(x,y,p)=\delta_Z(\langle f, g\rangle_R^D)(x,y,p).
\end{split}\]
Therefore, $\nabla^0$ is a compatible connection on $\Xi$ with respect to $\langle \cdot, \cdot \rangle_R^D$.
\end{proof}

To verify the curvature given in \eqref{eq:min_curv}, recall from \cite{CR} that the values of $\Theta_\nabla$ of a compatible connection $\nabla$ on $\Xi$ are in ${(\EE)}^{s}$, the set of skew-symmetric elements of $\EE$. i.e. $\Theta^{\ast}_\nabla(X,Y)=-\Theta_\nabla(X,Y)$ for all $X,Y\in \mathfrak{h}$.
Also recall from Proposition~7 of \cite{Kang1} that for a multiplication-type element $\mathbb{G}$ of $\EE$ with corresponding function $G\in C^\infty(\mathbb{T}^2)$, i.e. $\mathbb{G}(x,y,p)=G(x,y)\delta_0(p)$, $\mathbb{G}$ is skew-symmetric if and only if 
\[(\mathbb{G}\cdot f)(x,y)=-G(x,y) f(x,y) \quad \text{for $f\in \Xi$.}\]
Thus for given $\nabla^0$ in Proposition~\eqref{App:eq:min_conn}, once we obtain
\[
(\Theta_{\nabla^0}(Y,Z)\cdot f)(x,y)=-\frac{\pi i}{\mu} f(x,y) \quad \text{for $f\in \Xi$,}
\]
which will be shown in the proof of the following proposition, then we get  $\Theta_{\nabla^0}(Y,Z)=\frac{\pi i}{\mu}\Id_E$ by Proposition~7 of \cite{Kang1}.

\begin{prop}\label{propA:curv}
The compatible connection $\nabla^0$ given in Proposition~\ref{App:prop:cc} has the following form of constant curvature:
\begin{equation}
\Theta_{\nabla^0}(X,Y)=0,\;\;\Theta_{\nabla^0}(X,Z)=0,\;\;\Theta_{\nabla^0}(Y,Z)=\frac{\pi i}{\mu}\Id_E,
\end{equation}
where $\Id_E(x,y,p)=\delta_0(p)$.
\end{prop}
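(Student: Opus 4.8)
The plan is to compute the three curvature $2$-forms directly from the definition $\Theta_{\nabla^0}(A,B)=\nabla^0_A\nabla^0_B-\nabla^0_B\nabla^0_A-\nabla^0_{[A,B]}$ using the explicit formulas \eqref{App:eq:min_conn}, and then to translate the resulting operators into elements of $\EE$ via the skew-symmetry remark preceding the statement. First I would record the bracket relations in $\mathfrak{h}$: since $\{X,Y,Z\}$ is the Heisenberg basis with $[X,Y]=cZ$ and $Z$ central, we have $[X,Z]=[Y,Z]=0$. Hence the three curvatures reduce to $\Theta_{\nabla^0}(X,Y)\cdot\xi=\nabla^0_X\nabla^0_Y\xi-\nabla^0_Y\nabla^0_X\xi-c\,\nabla^0_Z\xi$, while $\Theta_{\nabla^0}(X,Z)\cdot\xi=[\nabla^0_X,\nabla^0_Z]\xi$ and $\Theta_{\nabla^0}(Y,Z)\cdot\xi=[\nabla^0_Y,\nabla^0_Z]\xi$.

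For each commutator I would fix $\xi\in\Xi$, apply the two operators, and expand using the product rule, relying on equality of mixed partials $\partial_x\partial_y\xi=\partial_y\partial_x\xi$. For $(X,Y)$ the mixed second partials cancel and the two first-order terms $\mp\tfrac{\pi ci}{2\mu}x^2\,\partial_x\xi$ cancel; the single surviving term comes from $\nabla^0_Y=-\partial_x$ differentiating the multiplier $\tfrac{\pi ci}{2\mu}x^2$ of $\nabla^0_X$, producing $\tfrac{\pi ci}{\mu}x\,\xi$, which is exactly $c\,\nabla^0_Z\xi=\tfrac{\pi ci}{\mu}x\,\xi$, so $\Theta_{\nabla^0}(X,Y)=0$. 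For $(X,Z)$ both orderings yield identical terms, since the multiplier $\tfrac{\pi ix}{\mu}$ of $\nabla^0_Z$ commutes with the multiplication part of $\nabla^0_X$ and is unaffected by $\partial_y$; thus $\Theta_{\nabla^0}(X,Z)=0$. For $(Y,Z)$ the only term that does not cancel is the one where $\nabla^0_Y=-\partial_x$ hits the factor $x$ in the multiplier of $\nabla^0_Z$, giving $(\Theta_{\nabla^0}(Y,Z)\cdot\xi)(x,y)=-\tfrac{\pi i}{\mu}\xi(x,y)$.

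The final and only conceptually substantive step is to identify this last operator as the element $\tfrac{\pi i}{\mu}\Id_E$ of $\EE$. Here I would invoke the remark stated just before the proposition: the curvature of a compatible connection is a skew-symmetric $\EE$-valued $2$-form, and by Proposition~7 of \cite{Kang1} a skew-symmetric multiplication-type element $\mathbb{G}$ with symbol $G(x,y)$ (so that $\mathbb{G}(x,y,p)=G(x,y)\delta_0(p)$) acts on $\Xi$ by $(\mathbb{G}\cdot\xi)(x,y)=-G(x,y)\xi(x,y)$. Matching $-G(x,y)=-\tfrac{\pi i}{\mu}$ forces $G\equiv\tfrac{\pi i}{\mu}$, a constant that is genuinely skew-symmetric since $\overline{G}=-G$; consequently $\Theta_{\nabla^0}(Y,Z)=\tfrac{\pi i}{\mu}\Id_E$ with $\Id_E(x,y,p)=\delta_0(p)$, as claimed.

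I expect no real obstacle beyond careful bookkeeping in the $(X,Y)$ computation, where one must track that the product-rule term from $\partial_x(x^2\xi)$ is precisely what cancels the $-c\,\nabla^0_Z$ contribution; the remaining work is routine cancellation of mixed partials and matching of first-order terms. The one place where a purely formal computation is not self-sufficient is the passage from ``$\Theta_{\nabla^0}(Y,Z)$ acts as $-\tfrac{\pi i}{\mu}$'' to ``$\Theta_{\nabla^0}(Y,Z)=\tfrac{\pi i}{\mu}\Id_E$ as an element of $\EE$,'' which is exactly what the skew-symmetry characterization of \cite{Kang1} supplies.
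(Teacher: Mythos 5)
Your proposal is correct and follows essentially the same route as the paper's proof: a direct computation of the three commutators $\nabla^0_A\nabla^0_B-\nabla^0_B\nabla^0_A-\nabla^0_{[A,B]}$ from the explicit formulas, with the same cancellation pattern (mixed partials, the $\frac{\pi ci}{2\mu}x^2\partial_x$ terms, and the product-rule term absorbing $c\nabla^0_Z$), followed by the identification of the multiplication operator $-\frac{\pi i}{\mu}$ with the element $\frac{\pi i}{\mu}\Id_E$ of $\EE$ via the skew-symmetry characterization in Proposition~7 of \cite{Kang1}. No gaps; this matches the paper's argument step for step.
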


\begin{proof}
We compute the curvature $\Theta_{\nabla^0}$ as follows. Fix $f\in \Xi$, then compute
\[\begin{split}
&(\Theta_{\nabla^0}(X,Y)\cdot f)(x,y)=\nabla^0_X(\nabla^0_Yf)(x,y)-\nabla^0_Y(\nabla^0_Xf)(x,y)-(\nabla^0_{[X,Y]}f)(x,y)\\
&=-\frac{\partial}{\partial y}(\nabla^0_Yf)(x,y)+\frac{\pi ci}{2\mu}x^2 (\nabla^0_Yf)(x,y)+\frac{\partial}{\partial x}(\nabla^0_Xf)(x,y)-c\frac{\pi ix}{\mu}f(x,y)\\
\end{split}\]
\[\begin{split}
&=-\frac{\partial}{\partial y}\Big(-\frac{\partial f}{\partial x}(x,y)\Big)+ \frac{\pi ci}{2\mu}x^2\Big(-\frac{\partial f}{\partial x}(x,y)\Big)+\frac{\partial}{\partial x}\Big(-\frac{\partial f}{\partial y}(x,y) +\frac{\pi ci}{2\mu}x^2f(x,y)\Big)\\
&\quad\quad-\frac{\pi cix}{\mu}f(x,y)\\
&=\frac{\partial^2 f}{\partial y\partial x}(x,y)-\frac{\pi ci}{2\mu}x^2\frac{\partial f}{\partial x}(x,y)-\frac{\partial^2 f}{\partial y\partial x}(x,y)+\frac{\pi ci}{\mu} xf(x,y)+\frac{\pi ci}{2\mu}x^2\frac{\partial f}{\partial x}(x,y)\\
&\quad \quad -\frac{\pi cix}{\mu}f(x,y)\\
&=0.
\end{split}\]
Thus $\Theta_{\nabla^0}(X,Y)=0$. Also
\[\begin{split}
&(\Theta_{\nabla^0}(X,Z)\cdot f)(x,y)=\nabla^0_X(\nabla^0_Zf)(x,y)-\nabla^0_Y(\nabla^0_Xf)(x,y)\\
&=-\frac{\partial}{\partial y}(\nabla^0_Zf)(x,y)+\frac{\pi ci}{2\mu}x^2(\nabla^0_Zf)(x,y)-\frac{\pi ix}{\mu}(\nabla^0_Xf)(x,y)\\
&=-\frac{\partial}{\partial y}\Big(\frac{\pi ix}{\mu}f(x,y)\Big)+\frac{\pi ci}{2\mu}x^2\Big(\frac{\pi ix}{\mu}f(x,y)\Big)-\frac{\pi ix}{\mu}\Big(-\frac{\partial f}{\partial y}(x,y)+\frac{\pi ci}{2\mu}x^2f(x,y)\Big)\\
&=-\frac{\pi ix}{\mu}\frac{\partial f}{\partial y}(x,y)+\frac{\pi ci}{2\mu}x^2\Big(\frac{\pi ix}{\mu}f(x,y)\Big)+\frac{\pi ix}{\mu}\frac{\partial f}{\partial y}(x,y)-\frac{\pi ix}{\mu}\Big(\frac{\pi ci}{2\mu}x^2f(x,y)\Big)\\
&=0.
\end{split}\]
Thus $\Theta_{\nabla^0}(X,Z)=0$. Finally
\[\begin{split}
&(\Theta_{\nabla^0}(Y,Z)\cdot f)(x,y)=\nabla^0_Y(\nabla^0_Zf)(x,y)-\nabla^0_Z(\nabla^0_Yf)(x,y)\\
&=-\frac{\partial}{\partial x}(\nabla^0_Zf)(x,y)-\frac{\pi ix}{\mu}(\nabla^0_Yf)(x,y)\\
&=-\frac{\partial}{\partial x}\Big(\frac{\pi ix}{\mu}f(x,y)\Big)-\frac{\pi ix}{\mu}\Big(-\frac{\partial f}{\partial x}(x,y)\Big)\\
&=-\frac{\pi i}{\mu}f(x,y)-\frac{\pi ix}{\mu}\frac{\partial f}{\partial x}(x,y)+\frac{\pi ix}{\mu}\frac{\partial f}{\partial x}(x,y)\\
&=-\frac{\pi i}{\mu}f(x,y).
\end{split}\]
Thus by Proposition~7 of \cite{Kang1} we have $\Theta_{\nabla^0}(Y,Z)=\frac{\pi i}{\mu}\Id_E$, which completes the proof.

\end{proof}

\end{appendices}

\end{document}